\documentclass[a4paper]{article}

\usepackage[utf8]{inputenc}
\usepackage[round]{natbib}
\usepackage[english]{babel}
\usepackage{amsfonts}
\usepackage{amsmath}
\usepackage{amsthm}
\usepackage{amssymb}
\usepackage{dsfont}
\usepackage{url}
\usepackage{hyperref}
\usepackage{tikz-cd}
\usepackage{mathpartir}
\usepackage{mathtools}
\usepackage{enumitem}
\usepackage{color}

\mathtoolsset{showonlyrefs}

\newtheorem{theorem}{Theorem}

\newtheorem{lemma}[theorem]{Lemma}
\newtheorem{proposition}[theorem]{Proposition}
\theoremstyle{remark}
\newtheorem{remark}[theorem]{Remark}
\theoremstyle{definition}
\newtheorem{definition}[theorem]{Definition}

\begin{document}

\title{An interpretation of dependent type theory in a model category of locally cartesian closed categories}

\author{Martin E. Bidlingmaier}

\date{}

\maketitle

\begin{abstract}
  Locally cartesian closed (lcc) categories are natural categorical models of extensional dependent type theory.
  This paper introduces the ``gros'' semantics in the category of lcc categories:
  Instead of constructing an interpretation in a given individual lcc category, we show that also the category of all lcc categories can be endowed with the structure of a model of dependent type theory.
  The original interpretation in an individual lcc category can then be recovered by slicing.

  As in the original interpretation, we face the issue of coherence:
  Categorical structure is usually preserved by functors only up to isomorphism, whereas syntactic substitution commutes strictly with all type theoretic structure.
  Our solution involves a suitable presentation of the higher category of lcc categories as model category.
  To that end, we construct a model category of lcc sketches, from which we obtain by the formalism of algebraically (co)fibrant objects model categories of strict lcc categories and then algebraically cofibrant strict lcc categories.
  The latter is our model of dependent type theory.
\end{abstract}

\section{Introduction}

Locally cartesian closed (lcc) categories are natural categorical models of extensional dependent type theory \citep{lcc-categories-and-type-theory}:
Given an lcc category $\mathcal{C}$, one interprets
\begin{itemize}
  \item
    contexts $\Gamma$ as objects of $\mathcal{C}$;
  \item
    (simultaneous) substitutions from context $\Delta$ to context $\Gamma$ as morphisms $f : \Delta \rightarrow \Gamma$ in $\mathcal{C}$;
  \item
    types $\Gamma \vdash \sigma$ as morphisms $\sigma : \operatorname{dom} \sigma \rightarrow \Gamma$ in $\mathcal{C}$ with codomain $\Gamma$; and
  \item
    terms $\Gamma \vdash s : \sigma$ as sections $s : \Gamma \rightleftarrows \operatorname{dom} \sigma : \sigma$ to the interpretations of types.
\end{itemize}
A context extension $\Gamma.\sigma$ is interpreted as the domain of $\sigma$.
Application of a substitution $f : \Delta \rightarrow \Gamma$ to a type $\Gamma \vdash \sigma$ is interpreted as pullback
\begin{equation}
  \begin{tikzcd}
    \operatorname{dom} \sigma[f] \arrow[r] \arrow[d, "{\sigma[f]}"] \arrow[dr, phantom, near start, "\lrcorner"] & \operatorname{dom} \sigma \arrow[d, "\sigma"] \\
    \Delta \arrow[r, "f"] & \Gamma
  \end{tikzcd}
\end{equation}
and similarly for terms $\Gamma \vdash s : \sigma$.
By definition, the pullback functors $f^* : \mathcal{C}_{/ \Gamma} \rightarrow \mathcal{C}_{/ \Delta}$ in lcc categories $\mathcal{C}$ have both left and right adjoints $\Sigma_f \dashv f^* \dashv \Pi_f$, and these are used for interpreting $\mathbf{\Sigma}$-types and $\mathbf{\Pi}$-types.
For example, the interpretation of a pair of types $\Gamma \vdash \sigma$ and $\Gamma.\sigma \vdash \tau$ is a composable pair of morphisms $\Gamma.\sigma.\tau \xrightarrow{\tau} \Gamma.\sigma \xrightarrow{\sigma} \Gamma$, and then the dependent product type $\Gamma \vdash \mathbf{\Pi}_\sigma \, \tau$ is interpreted as $\Pi_\sigma(\tau)$, which is an object of $\mathcal{C}_{/ \Gamma}$, i.e.\@ a morphism into $\Gamma$.

However, there is a slight mismatch:
Syntactic substitution is functorial and commutes strictly with type formers, whereas pullback is generally only pseudo-functorial and hence preserves universal objects only up to isomorphism.
Here functoriality of substitution means that if one has a sequence $\mathcal{E} \xrightarrow{g} \Gamma \xrightarrow{f} \Delta$ of substitutions, then we have equalities $\sigma[g][f] = \sigma[gf]$ and $s[g][f] = s[gf]$, i.e.\@ substituting in succession yields the same result as substituting with the composition.
For pullback functors, however, we are only guaranteed a natural isomorphism $f^* \circ g^* \cong (g \circ f)^*$.
Similarly, in type theory we have $(\mathbf{\Pi}_\sigma \, \tau)[f] = \mathbf{\Pi}_{\sigma[f]} \, \tau[f^+]$ (where $f^+$ denotes the weakening of $f$ along $\sigma$), whereas for pullback functors there merely exist isomorphisms $f^*(\Pi_\sigma(\tau)) \cong \Pi_{f^*(\sigma)} \, (f^+)^*(\tau)$.

In response to these problems, several notions of models with strict pullback operations were introduced, e.g.\@ categories with families (cwfs) \citep{internal-type-theory}, and coherence techniques were developed to ``strictify'' weak models such as lcc categories to obtain models with well-behaved substitution \citep{substitution-up-to-isomorphism,on-the-interpretation-of-type-theory-in-lcc-categories,the-local-universes-model}.
Thus to interpret dependent type theory in some lcc category $\mathcal{C}$, one first constructs an equivalence $\mathcal{C} \simeq \mathcal{C}^s$ such that $\mathcal{C}^s$ can be endowed with the structure of a strict model of type theory (say, cwf structure), and then interprets type theory in $\mathcal{C}^s$.

In this paper we construct cwf structure on the category of all lcc categories instead of cwf structure on some specific lcc category.
First note that the classical interpretation of type theory in an lcc category $\mathcal{C}$ is essentially an interpretation in the slice categories of $\mathcal{C}$:
\begin{itemize}
  \item
    Objects $\Gamma \in \operatorname{Ob} \mathcal{C}$ can be identified with slice categories $\mathcal{C}_{/ \Gamma}$.
  \item
    Morphisms $f : \Delta \rightarrow \Gamma$ can be identified with lcc functors $f^* : \mathcal{C}_{/ \Gamma} \rightarrow \mathcal{C}_{/ \Delta}$ which commute with the pullback functors $\Gamma^* : \mathcal{C} \rightarrow \mathcal{C}_{/ \Gamma}$ and $\Delta^* : \mathcal{C} \rightarrow \mathcal{C}_{/ \Delta}$.
  \item
    Morphisms $\sigma : \operatorname{dom} \sigma \rightarrow \Gamma$ with codomain $\Gamma$ can be identified with the objects of the slice categories $\mathcal{C}_{/ \Gamma}$.
  \item
    Sections $s : \Gamma \leftrightarrows \operatorname{dom} \sigma : \sigma$ can be identified with morphisms $1 \rightarrow \sigma$ with $1 = \mathrm{id}_\Gamma$ the terminal object in the slice category $\mathcal{C}_{/ \Gamma}$.
\end{itemize}
Removing all reference to the base category $\mathcal{C}$, we may now attempt to interpret
\begin{itemize}
  \item
    each context $\Gamma$ as a separate lcc category;
  \item
    a substitution from $\Delta$ to $\Gamma$ as an lcc functor $f : \Gamma \rightarrow \Delta$;
  \item
    types $\Gamma \vdash \sigma$ as objects $\sigma \in \operatorname{Ob} \Gamma$; and
  \item
    terms $\Gamma \vdash s : \sigma$ as morphisms $s : 1 \rightarrow \sigma$ from a terminal object $1$ to $\sigma$.
\end{itemize}
In the original interpretation, substitution in types and terms is defined by the pullback functor $f^* : \mathcal{C}_{/ \Gamma} \rightarrow \mathcal{C}_{/ \Delta}$ along a morphism $f : \Delta \rightarrow \Gamma$.
In our new interpretation, $f$ is already an lcc functor, which we simply apply to objects and morphisms of lcc categories.

The idea that different contexts should be understood as different categories is by no means novel, and indeed widespread among researchers of geometric logic; see e.g. \citet[section 4.5]{locales-and-toposes-as-spaces}.
Not surprisingly, some of the ideas in this paper have independently already been explored, in more explicit form, in \citet{au-sketches} for geometric logic.
To my knowledge, however, an interpretation of type theory along those lines, especially one with strict substitution, has never been spelled out explicitly, and the present paper is an attempt at filling this gap.

Like Seely's original interpretation, the naive interpretation in the category of lcc categories outlined above suffers from coherence issues:
Lcc functors preserve lcc structure up to isomorphism, but not necessarily up to equality, and the latter would be required for a model of type theory.

Even worse, our interpretation of contexts as lcc categories does not admit well-behaved context extensions.
Recall that for a context $\Gamma$ and a type $\Gamma \vdash \sigma$ in a cwf, a context extension consists of a context morphism $p : \Gamma.\sigma \rightarrow \Gamma$ and a term $\Gamma.\sigma \vdash v : \sigma[p]$ such that for every morphism $f : \Delta \rightarrow \Gamma$ and term $\Delta \vdash s : \sigma[f]$ there is a unique morphism $\langle f, s \rangle : \Delta \rightarrow \Gamma.\sigma$ over $\Gamma$ such that $v[\langle f, s \rangle] = s$.
In our case a context morphism is an lcc functor in the opposite direction.
Thus a context extension of an lcc category $\Gamma$ by $\sigma \in \operatorname{Ob} \Gamma$ would consist of an lcc functor $p : \Gamma \rightarrow \Gamma.\sigma$ and a morphism $v : 1 \rightarrow p(\sigma)$ in $\Gamma.\sigma$, and $(p, v)$ would have to be suitably initial.
At first sight it might seem that the slice category $\Gamma_{/ \sigma}$ is a good candidate:
Pullback along the unique map $\sigma \rightarrow 1$ defines an lcc functor $\sigma^* : \Gamma \cong \Gamma_{/ 1} \rightarrow \Gamma_{/ \sigma}$.
The terminal object of $\Gamma_{/ \sigma}$ is the identity on $\sigma$, and applying $\sigma^*$ to $\sigma$ itself yields the first projection $\mathrm{pr}_1 : \sigma \times \sigma \rightarrow \sigma$.
Thus the diagonal $d : \sigma \rightarrow \sigma \times \sigma$ is a term $\Gamma_{/ \sigma} \vdash d : \sigma^*(\sigma)$.
The problem is that, while this data is indeed universal, it is only so in the bicategorical sense (see Lemma \ref{lem:slice-is-weak-ext}):
Given an lcc functor $f : \Gamma \rightarrow \Delta$ and term $\Delta \vdash w : f(\sigma)$, we obtain an lcc functor
\begin{equation}
  \begin{tikzcd}
    \langle f, s \rangle : \Gamma_{/ \sigma} \arrow[r, "{f_{/ \sigma}}"] & \Delta_{/ f(\sigma)} \arrow[r, "s^*"] & \Delta,
  \end{tikzcd}
\end{equation}
however, $\langle f, s \rangle$ commutes with $\sigma^*$ and $f$ only up to natural isomorphism, the equation $\langle f, s \rangle(d) = s$ holds only up to this natural isomorphism, and $\langle f, s \rangle$ is unique only up to unique isomorphism.

The issue with context extensions can be understood from the perspective of comprehension categories, an alternative notion of model of type theory, as follows.
Our cwf is constructed on the opposite of $\mathrm{Lcc}$, the category of lcc categories and lcc functors.\footnote{
  Not to be confused with the category $\mathrm{Lcc}$ of lcc sketches of Definition \ref{def:lcc-axioms}; here we mean the category of fully realized lcc categories, i.e.\@ fibrant lcc sketches.
}
The corresponding comprehension category should thus consist of a Grothendieck fibration $p : \mathcal{E} \rightarrow \mathrm{Lcc}^\mathrm{op}$ and a functor $\mathcal{P} : \mathcal{E} \rightarrow (\mathrm{Lcc}^\mathrm{op})^\rightarrow$ to the arrow category of $\mathrm{Lcc}^\mathrm{op}$ such that
\begin{equation}
  \begin{tikzcd}
    \mathcal{E} \arrow[rr, "\mathcal{P}"] \arrow[dr, "p"'] & & (\mathrm{Lcc}^\mathrm{op})^{\rightarrow} \arrow[dl, "\mathrm{cod}"] \\
    & \mathrm{Lcc}^\mathrm{op}
  \end{tikzcd}
\end{equation}
commutes and $\mathcal{P}(f)$ is a pullback square for each cartesian morphism $f$ in $\mathcal{E}$.
The data of a Grothendieck fibration $p$ as above is equivalent to a (covariant) functor $\mathrm{Lcc} \rightarrow \mathrm{Cat}$ via the Grothendieck construction, and here we simply take the forgetful functor.
Thus the objects of $\mathcal{E}$ are pairs $(\Gamma, \tau)$ such that $\Gamma$ is an lcc category and $\tau \in \operatorname{Ob} \Gamma$, and a morphism $(\Gamma, \tau) \rightarrow (\Delta, \sigma)$ in $\mathcal{E}$ is a pair $(f, k)$ of lcc functor $f : \Delta \rightarrow \Gamma$ and morphism $k : \tau \rightarrow f(\sigma)$ in $\Gamma$.

The functor $\mathcal{P}$ should assign to objects $(\Gamma, \tau)$ of $\mathcal{E}$ the projection of the corresponding context extension, hence we define $\mathcal{P}(\Gamma, \tau) = \tau^* : \Gamma \rightarrow \Gamma_{/ \tau}$ as the pullback functor to the slice category.
The cartesian morphisms of $\mathcal{E}$ are those with invertible vertical components $k$, so they are given up to isomorphism by pairs of the form $(f, \mathrm{id}) : (\Gamma, f(\sigma)) \rightarrow (\Delta, \sigma)$.
The images of such morphisms under $\mathcal{P}$ are squares
\begin{equation}
  \label{eq:homotopy-pushout-comprehension}
  \begin{tikzcd}
    \Delta \arrow[d, "\sigma^*"] \arrow[r, "f"] & \Gamma \arrow[d, "f(\tau)^*"] \\
    \Delta_{/ \sigma} \arrow[r, "f_{/ \sigma}"] & \Gamma_{/ f(\tau)}
  \end{tikzcd}
\end{equation}
in $\mathrm{Lcc}$.
For $(p, \mathcal{P})$ to be a comprehension category, they would have to be pushout squares, but in fact they are bipushout squares:
They satisfy the universal property of pushouts up to unique isomorphism, but not up to equality.
If $f$ does not preserve pullback squares up to strict equality, then the square \eqref{eq:homotopy-pushout-comprehension} commutes only up to isomorphism, not equality.
Thus $\mathcal{P}$ is not even a functor but a bifunctor.

Usually when one considers coherence problems for type theory, the problem lies in the fibration $p$, which is often not strict, and it suffices to change the total category $\mathcal{E}$ while leaving the base unaltered.
Our fibration $p$ is already strict, but it does not support substitution stable type constructors.
Here the main problem is the base category, however:
The required pullback diagrams exist only in the bicategorical sense.
Thus the usual constructions \citep{on-the-interpretation-of-type-theory-in-lcc-categories,the-local-universes-model} are not applicable.

The goal must thus be to find a category that is (bi)equivalent to $\mathrm{Lcc}$ in which we can replace the bipushout squares \eqref{eq:homotopy-pushout-comprehension} by 1-categorical pushouts.
Our tool of choice to that end will be \emph{model category theory} (see e.g.\@ \citet{hirschhorn}).
Model categories are presentations of higher categories as ordinary 1-categories with additional structure.
Crucially, model categories allow the computation of higher (co)limits as ordinary 1-categorical (co)limits under suitable hypotheses.
The underlying 1-category of the model category presenting a higher category is not unique, and some presentations are more suitable for our purposes than others.
We explore three Quillen equivalent model categories, all of which encode the same higher category of lcc categories, and show that the third one indeed admits the structure of a model of dependent type theory.

Because of its central role in the paper, the reader is thus expected to be familiar with some notions of model category theory.
We make extensive use of the notion of algebraically (co)fibrant object in a model category \citep{algebraic-models,coalgebraic-models}, but the relevant results are explained where necessary and can be taken as black boxes for the purpose of this paper.
Because of the condition on enrichment in Theorem \ref{th:coalgebraic-model-category}, all model categories considered here are proved to be model $\mathrm{Gpd}$-categories, that is, model categories enriched over the category of groupoids with their canonical model structure.
See \citet{enriched-model-cats} for background on enriched model category theory, \citet{groupoid-model-cat} for the canonical model category of groupoids, and \citet{homotopy-theoretic-aspects} for the closely related model $\mathrm{Cat}$-categories.
While it is more common to work with the more general simplicially enriched model categories, the fact that the higher category of lcc categories is 2-truncated affords us to work with simpler groupoid enrichments instead.

In Section \ref{sec:lcc-sketches} we construct the model category $\mathrm{Lcc}$ of \emph{lcc sketches}, a left Bousfield localization of an instance of Isaev's model category structure on marked objects \citep{marked-objects}.
Lcc sketches are to lcc categories as finite limit sketches are to finite limit categories.
Thus lcc sketches are categories with some diagrams marked as supposed to correspond to a universal object of lcc categories, but marked diagrams do not have to actually satisfy the universal property.
The model category structure is set up such that every lcc sketch generates an lcc category via fibrant replacement, and lcc sketches are equivalent if and only if they generate equivalent lcc categories.

In Section \ref{sec:slcc} we define the model category $\mathrm{sLcc}$ of \emph{strict lcc categories}.
Strict lcc categories are the algebraically fibrant objects of $\mathrm{Lcc}$, that is, they are objects of $\mathrm{Lcc}$ \emph{equipped} with canonical lifts against trivial cofibrations witnessing their fibrancy in $\mathrm{Lcc}$.
Such canonical lifts correspond to canonical choices of universal objects in lcc categories, and the morphisms in $\mathrm{sLcc}$ preserve these canonical choices not only up to isomorphism but up to equality.

Section \ref{sec:algebraically-cofibrant} finally establishes the model of type theory in the opposite of $\operatorname{Coa} \mathrm{sLcc}$, the model category of \emph{algebraically cofibrant objects} in $\mathrm{sLcc}$.
The objects of $\operatorname{Coa} \mathrm{sLcc}$ are strict lcc categories $\Gamma$ such that every (possibly non-strict) lcc functor $\Gamma \rightarrow \Delta$ has a canonical strict isomorph.
This additional structure is crucial to reconcile the context extension operation, which is given by freely adjoining a morphism to a strict lcc category, with taking slice categories.

In Section \ref{sec:applications} we show that the cwf structure on $(\operatorname{Coa} \mathrm{sLcc})^\mathrm{op}$ can be used to rectify Seely's original interpretation in a given lcc category $\mathcal{C}$.
This is done by choosing an equivalent lcc category $\Gamma \simeq \mathcal{C}$ with $\Gamma \in \operatorname{Ob} \, (\operatorname{Coa} \mathrm{sLcc})$, and then $\Gamma$ inherits cwf structure from the core of the slice cwf $(\operatorname{Coa} \mathrm{sLcc})^\mathrm{op}_{ / \Gamma}$.

\textbf{Acknowledgements.}
This paper benefited significantly from input by several members of the research community.
I would like to thank the organizers of the TYPES workshop 2019 and the HoTT conference 2019 for giving me the opportunity to present preliminary versions of the material in this paper.
Conversations with Emily Riehl, Karol Szumiło and David White made me aware that the constructions in this paper can be phrased in terms of model category theory.
Daniel Gratzer pointed out to me the biuniversal property of slice categories.
Valery Isaev explained to me some aspects of the model category structure on marked objects.
I would like to thank my advisor Bas Spitters for his advice on this paper, which is part of my PhD research.

This work was supported by the Air Force Office and Scientific Research project ``Homotopy Type Theory and Probabilistic Computation'', grant number 12595060.

\section{Lcc sketches}
\label{sec:lcc-sketches}

This section is concerned with the model category $\mathrm{Lcc}$ of lcc sketches.
$\mathrm{Lcc}$ is constructed as the left Bousfield localization of a model category of lcc-marked objects, an instance of Isaev's model category structure on marked objects.

\begin{definition}[\citet{marked-objects} Definition 2.1]
  Let $\mathcal{C}$ be a category and let $i : I \rightarrow \mathcal{C}$ be a diagram in $\mathcal{C}$.
  An \emph{($i$-)marked object} is given by an object $X$ in $\mathcal{C}$ and a subfunctor $m_X$ of $\mathrm{Hom}(i(-), X) : I^\mathrm{op} \rightarrow \mathrm{Set}$.
  A map of the form $k : i(K) \rightarrow X$ is \emph{marked} if $k \in m_X(K)$.

  A morphism of $i$-marked objects is a marking-preserving morphism of underlying objects in $\mathcal{C}$, i.e.\@ a morphism $f : X \rightarrow Y$ such that the image of $m_X$ under postcomposition by $f$ is contained in $m_Y$.
  The category of $i$-marked objects is denoted by $\mathcal{C}^i$.
\end{definition}

The forgetful functor $U : \mathcal{C}^i \rightarrow \mathcal{C}$ has a left and right adjoint:
Its left adjoint $X \mapsto X^\flat$ is given by equipping an object $X$ of $\mathcal{C}$ with the minimal marking $m_{X^\flat} = \emptyset \subseteq \mathrm{Hom}(i(-), X)$, while the right adjoint $X \mapsto X^\sharp$ equips objects with their maximal marking $m_{X^\sharp} = \mathrm{Hom}(i(-), X)$.

In our application, $\mathcal{C} = \mathrm{Cat}$ is the category of (sufficiently small) categories, and $I = I_\mathrm{lcc}$ contains diagrams corresponding to the shapes (e.g. a squares for pullbacks) of lcc structure.

\begin{definition}
  The subcategory $I_\mathrm{lcc} \subseteq \mathrm{Cat}$ of \emph{lcc shapes} is given as follows:
  Its objects are the three diagrams $\mathrm{Tm}$, $\mathrm{Pb}$ and $\mathrm{Pi}$.
  $\mathrm{Tm}$ is given by the category with a single object $t$ and no nontrivial morphisms; it corresponds to terminal objects.
  $\mathrm{Pb}$ is the free-standing non-commutative square
  \begin{equation}
    \begin{tikzcd}
      \cdot \arrow[d, "p_1"] \arrow[r, "p_2"] & \cdot \arrow[d, "f_2"] \\
      \cdot \arrow[r, "f_1"] & \cdot;
    \end{tikzcd}
  \end{equation}
  and corresponds to pullback squares.
  $\mathrm{Pi}$ is the free-standing non-commutative diagram
  \begin{equation}
    \begin{tikzcd}
      \cdot \arrow[ddr, bend right, "p_1"] \arrow[drr, bend left, "p_2"] \arrow[dr, "\varepsilon"] \\
      & \cdot \arrow[d, "g"] & \cdot \arrow[d, "f_2"] \\
      & \cdot \arrow[r, "f_1"] & \cdot
    \end{tikzcd}
  \end{equation}
  and corresponds to dependent products $f_2 = \Pi_{f_1}(g)$ and their evaluation maps $\varepsilon$.
  The only nontrivial functor in $I_\mathrm{lcc}$ is the inclusion of $\mathrm{Pb}$ into $\mathrm{Pi}$ as indicated by the variable names.
  It corresponds to the requirement that the domain of the evaluation map of dependent products must be a suitable pullback.

  We obtain the category $\mathrm{Cat}^\mathrm{lcc} = \mathrm{Cat}^{I_\mathrm{lcc}}$ of \emph{lcc-marked categories}.
\end{definition}

Now suppose that $\mathcal{C} = \mathcal{M}$ is a model category.
Let $\gamma : \mathcal{M} \rightarrow \operatorname{Ho} \mathcal{M}$ be the quotient functor to the homotopy category.
A marking $m_X \subseteq \mathrm{Hom}(i(-), X)$ of some $X \in \operatorname{Ob} \mathcal{M}$ induces a canonical marking $\gamma(m_X) \subseteq \mathrm{Hom}(\gamma(i(-)), \gamma(X))$ on $\gamma(X)$ by taking $\gamma(m_X)$ to be the image of $m_X$ under $\gamma$.
Thus a morphism $K \rightarrow X$ in $\operatorname{Ho} \mathcal{M}$ is marked if and only if it has a preimage under $\gamma$ which is marked.

\begin{theorem}[\citet{marked-objects} Theorem 3.3]
  \label{th:marked-model-category}
  Let $\mathcal{M}$ be a combinatorial model category and let $i : I \rightarrow \mathcal{M}$ be a diagram in $\mathcal{M}$ such that every object in the image of $i$ is cofibrant.
  Then the following defines the structure of a combinatorial model category on $\mathcal{M}^i$:
  \begin{itemize}
    \item
      A morphism $f : (m_X, X) \rightarrow (m_Y, Y)$ in $\mathcal{M}^i$ is a cofibration if and only if $f : X \rightarrow Y$ is a cofibration in $\mathcal{M}$.
    \item
      A morphism $f : (m_X, X) \rightarrow (m_Y, Y)$ in $\mathcal{M}^i$ is a weak equivalence if and only if $\gamma(f) : (\gamma(m_X), \gamma(X)) \rightarrow (\gamma(m_Y), \gamma(Y))$ is an isomorphism in $(\operatorname{Ho} \mathcal{M})^{\gamma i}$.
  \end{itemize}
  A marked object $(X, m_X)$ is fibrant if and only if $X$ is fibrant in $\mathcal{M}$ and the markings of $X$ are stable under homotopy; that is, if $k \simeq h : i(K) \rightarrow X$ are homotopic maps in $\mathcal{M}$ and $k$ is marked, then $h$ is marked.
  The adjunctions $(-)^\flat \dashv U$ and $U \dashv (-)^\sharp$ are Quillen adjunctions.
\end{theorem}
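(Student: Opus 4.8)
The plan is to produce the model structure via Jeff Smith's recognition theorem for combinatorial model categories, so the first task is to fix a locally presentable underlying category, a class of weak equivalences, and a set of generating cofibrations, and then to check Smith's hypotheses. Since $\mathcal{M}$ is combinatorial it is locally presentable, and I would first show that $\mathcal{M}^i$ is locally presentable as well: the fiber of $U : \mathcal{M}^i \to \mathcal{M}$ over each $X$ is the complete lattice of subfunctors $m_X \subseteq \operatorname{Hom}(i(-), X)$, and $U$ is an accessible bifibration with (co)complete fibers, so $\mathcal{M}^i$ is a category of models of a limit sketch over $\mathcal{M}$. Local presentability makes the adjoints $(-)^\flat \dashv U \dashv (-)^\sharp$ automatic, so that in particular $U$ preserves both limits and colimits, and it supplies the solution-set and accessibility inputs that Smith's theorem requires.

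Next I would pin down the generating cofibrations. Writing $I_\mathcal{M}$ and $J_\mathcal{M}$ for generating cofibrations and trivial cofibrations of $\mathcal{M}$, I take the generating cofibrations of $\mathcal{M}^i$ to be the set $I^i = \{ j^\flat : j \in I_\mathcal{M}\}$ together with, for each $K \in I$, a \emph{marking-adder} $c_K : i(K)^\flat \to i(K)^{(K)}$ that marks the identity of the (cofibrant) object $i(K)$; pushing $c_K$ out along a map $i(K)^\flat \to (m_X, X)$, i.e.\@ along a map $k : i(K) \to X$ in $\mathcal{M}$, adjoins $k$ to the marking $m_X$. Note each generator has underlying cofibration: $U j^\flat = j$, and $c_K$ is carried to an identity. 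For the weak equivalences I take the class $W$ of the statement and first record the easy structural facts: being an isomorphism in $(\operatorname{Ho}\mathcal{M})^{\gamma i}$ has two-out-of-three and is closed under retracts, and $W$ is accessible and accessibly embedded in the arrow category because $\gamma$ is accessible and the passage $m_X \mapsto \gamma(m_X)$ is built from accessible image constructions.

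The heart of the proof is Smith's two nontrivial conditions. The inclusion $\operatorname{inj}(I^i) \subseteq W$ is the more routine half: a map $f$ with the right lifting property against every $j^\flat$ has underlying map a trivial fibration, hence an underlying weak equivalence, while lifting against each $c_K$ forces any $k$ with $f \circ k$ marked downstream to be already marked in the source, so that $\gamma(f)$ is an isomorphism of markings. The genuinely hard condition, and where I expect the real work to lie, is that $W \cap \operatorname{cof}(I^i)$ is closed under pushout and transfinite composition: the marking part of a weak equivalence is defined through images under $\gamma$, and such images interact badly with $1$-categorical colimits. I would control this by isolating the generating trivial cofibrations $J^i = \{ j^\flat : j \in J_\mathcal{M}\}$ together with \emph{homotopy-stability} maps built from cylinder objects on the cofibrant $i(K)$ that propagate a marking along a homotopy, and then showing by a fibrant-replacement analysis that a cofibration lies in $W$ exactly when it is a retract of a transfinite composite of pushouts of $J^i$. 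The cofibrancy hypothesis on the image of $i$ is precisely what makes these cylinders available and the homotopy-stability generators well behaved.

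Finally I would harvest the consequences. That $\operatorname{cof}(I^i)$ coincides with the maps whose underlying morphism is a cofibration follows because $U$ preserves the colimits building the saturation while cofibrations of $\mathcal{M}$ are closed under those operations, giving one inclusion, and a retract argument applied to the small-object factorization of a map with underlying cofibration gives the other. The description of fibrant objects is read off from the right lifting property against $J^i$: lifting against $j^\flat$ gives fibrancy of $X$ in $\mathcal{M}$, and lifting against the homotopy-stability generators gives exactly that homotopic maps $i(K) \to X$ are simultaneously marked or unmarked. The Quillen adjunctions are then immediate: $(-)^\flat$ sends (trivial) cofibrations of $\mathcal{M}$ to (trivial) cofibrations of $\mathcal{M}^i$ since $U j^\flat = j$ and $j^\flat \in W$ whenever $j$ is a weak equivalence, so $(-)^\flat \dashv U$ is Quillen; and $U$ preserves cofibrations by definition and weak equivalences because an isomorphism in $(\operatorname{Ho}\mathcal{M})^{\gamma i}$ has underlying isomorphism in $\operatorname{Ho}\mathcal{M}$, so $U$ is left Quillen and $U \dashv (-)^\sharp$ is Quillen as well.
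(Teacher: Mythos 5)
The first thing to note is that the paper does not prove this statement at all: it is imported verbatim from Isaev (Theorem 3.3 of the cited work), and the only argument the paper supplies in its vicinity is the Remark immediately following it, which shows that the description of weak equivalences given here (``$\gamma(f)$ is an isomorphism in $(\operatorname{Ho}\mathcal{M})^{\gamma i}$'') is equivalent to Isaev's original formulation via a fibrant-replacement functor (his Lemma 2.5). So you are attempting something the paper deliberately outsources, and your proposal should be judged as a from-scratch reproof. As such, its skeleton is reasonable: local presentability of $\mathcal{M}^i$, the generating set $\{j^\flat\} \cup \{c_K\}$, the retract argument identifying $\operatorname{cof}(I^i)$ with the underlying cofibrations, the verification that $\operatorname{inj}(I^i) \subseteq W$ (using cofibrancy of $i(K)$ to lift marked maps through underlying trivial fibrations), and the Quillen adjunction checks are all correct and essentially complete.

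However, the two conditions you yourself flag as ``the heart'' are left as strategy descriptions rather than arguments, and the strategies as stated have problems. First, the accessibility of $W$ cannot be obtained the way you suggest: $\gamma : \mathcal{M} \to \operatorname{Ho}\mathcal{M}$ is not an accessible functor into a locally presentable category, so ``the passage $m_X \mapsto \gamma(m_X)$ is built from accessible image constructions'' does not parse. You need a characterization of $W$ that avoids $\operatorname{Ho}\mathcal{M}$ entirely --- precisely the fibrant-replacement reformulation that the paper's Remark proves equivalent to the stated one --- and then combine it with the Lurie--Rosický fact that the weak equivalences of a combinatorial model category form an accessible, accessibly embedded subcategory of the arrow category. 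Second, your plan for showing $W \cap \operatorname{cof}(I^i)$ is closed under pushout and transfinite composition --- identify generating trivial cofibrations $J^i$ and show every trivial cofibration is a retract of a relative $J^i$-cell complex --- is circular as stated, since that retract characterization is an output of the model structure, not an input to Smith's theorem. A direct argument is both available and simpler: the underlying map of a pushout of a map in $W \cap \operatorname{cof}(I^i)$ is a pushout of a trivial cofibration in $\mathcal{M}$, hence a trivial cofibration, and the marking condition follows from the diagram chase $\gamma(f')^{-1}\gamma(g') = \gamma(g)\gamma(f)^{-1}$ on the generators of the pushout marking --- exactly the computation the paper performs in the left-properness part of Lemma \ref{lem:simplicial-marked-objects}\ref{itm:marked-left-proper}. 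Relatedly, your identification of the fibrant objects depends on the homotopy-stability generators you never construct; the paper's Proposition \ref{prop:object-generating-triv-cof-marked} shows what these look like concretely in the $\mathrm{Cat}$ case (cylinders $\mathcal{I} \times K$ on the cofibrant shapes), and an analogous construction is what your sketch needs to be completed.
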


\begin{remark}
  The description of weak equivalences in Theorem \ref{th:marked-model-category} does not appear as stated in \citet{marked-objects}, but follows easily from results therein.
  Let $t : \mathrm{Id} \Rightarrow R : \mathcal{M} \rightarrow \mathcal{M}$ be a fibrant replacement functor.
  By \citet[lemma 2.5]{marked-objects}, a map $f : (m_X, X) \rightarrow (m_Y, Y)$ is a weak equivalence in $\mathcal{M}^i$ if and only if $f$ is a weak equivalence in $\mathcal{M}$ and for every diagram (of solid arrows)
  \begin{equation}
    \label{eq:isaev-classification-weak-equivalences}
    \begin{tikzcd}
      i(K) \arrow[drr, bend left, "k"] \arrow[ddr, bend right, "h"'] \arrow[dr, dashed, "h'"] \arrow[ddr, phantom, "\simeq"] \\
      & X \arrow[d, "t_X"] \arrow[r, "f"] & Y \arrow[d, "t_Y"]  \\
      & R(X) \arrow[r, "R(f)"] & R(Y)
    \end{tikzcd}
  \end{equation}
  in which the outer square commutes up to homotopy and $k$ is marked, there exists a marked map $h' : i(K) \rightarrow X$ as indicated such that $t_X h' \simeq h$.
  ($h'$ is not required to commute with $k$ and $f$.)

  Now assume that $f : (m_X, X) \rightarrow (m_Y, Y)$ satisfies this condition and let us prove that $\gamma(f)$ is an isomorphism of induced marked objects in the homotopy category.
  $\gamma(f)$ is an isomorphism in $\operatorname{Ho} \mathcal{M}$, so it suffices to show that $\gamma(f)^{-1}$ preserves markings.
  By definition, every marked morphism of $\gamma(Y)$ is of the form $\gamma(k) : \gamma(i(K)) \rightarrow \gamma(Y)$ for some marked $k : i(K) \rightarrow Y$.
  Because $i(K)$ is cofibrant and $R(X)$ is fibrant, the map $\gamma(t_X) \circ \gamma(f)^{-1} \circ \gamma(k) : \gamma(i(K)) \rightarrow \gamma(R(X))$ has a preimage $h : i(K) \rightarrow R(X)$ under $\gamma$.
  As $i(K)$ is cofibrant, $R(Y)$ is fibrant and $\gamma(R(f) \circ h) = \gamma(t_Y k)$, there is a homotopy $h \circ R(f) \simeq t_Y k$.
  By assumption, there exists a marked map $h' : i(K) \rightarrow X$ such that $h' t_X \simeq h$, thus $\gamma(f)^{-1} \circ \gamma(k) = \gamma(h')$ is marked.

  To prove the other direction of the equivalence, assume that $\gamma(f)$ is an isomorphism of marked objects and let $h, k$ be as in diagram \eqref{eq:isaev-classification-weak-equivalences}.
  $\gamma(f)^{-1} \gamma(k)$ is marked, hence has a preimage $h' : i(K) \rightarrow X$ under $\gamma$ which is marked.
  We have $\gamma(t_X h') = \gamma(h)$ because postcomposition of both sides with the isomorphism $\gamma(R(f))$ gives equal results.
  $i(K)$ is cofibrant and $R(X)$ is fibrant, thus $t_X h' \simeq h$.
\end{remark}

\begin{lemma}
  \label{lem:simplicial-marked-objects}
  Let $\mathcal{M}$ and $i : K \rightarrow \mathcal{M}$ be as in Theorem \ref{th:marked-model-category}.
  \begin{enumerate}[label={(\arabic*)}]
    \item
      \label{itm:marked-left-proper}
      If $\mathcal{M}$ is a left proper model category, then $\mathcal{M}^i$ is a left proper model category.
    \item
      \label{itm:marked-simplicial}
      If $\mathcal{M}$ is a model $\mathrm{Gpd}$-category, then $\mathcal{M}^i$ admits the structure of a model $\mathrm{Gpd}$-category such that $(-)^\flat \dashv U$ and $U \dashv (-)^\sharp$ lift to Quillen $\mathrm{Gpd}$-adjunctions.
  \end{enumerate}
\end{lemma}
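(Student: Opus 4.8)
The plan is to transfer both properties from $\mathcal{M}$ along the forgetful functor $U : \mathcal{M}^i \to \mathcal{M}$, exploiting that $U$ has both a left adjoint $(-)^\flat$ and a right adjoint $(-)^\sharp$, each a Quillen adjunction by Theorem~\ref{th:marked-model-category}. Thus $U$ is simultaneously left and right Quillen: it preserves all limits and colimits, sends cofibrations to cofibrations and fibrations to fibrations, and, by the description of weak equivalences in Theorem~\ref{th:marked-model-category}, preserves weak equivalences. I will also use that colimits in $\mathcal{M}^i$ are computed by $U$ on underlying objects, the marking of a colimit being the subfunctor of $\mathrm{Hom}(i(-),-)$ generated by the images of the markings of the constituents.

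For \ref{itm:marked-left-proper}, consider a pushout in $\mathcal{M}^i$ of a weak equivalence $w : A \to B$ along a cofibration $c : A \to C$, and write $v : C \to D$ for the pushout of $w$ and $b : B \to D$ for the other leg. Applying $U$ gives a pushout square in $\mathcal{M}$ with $U(c)$ a cofibration and $U(w)$ a weak equivalence, so left properness of $\mathcal{M}$ makes $U(v)$, and hence $\gamma(v)$, a weak equivalence; it remains to check that $\gamma(v)^{-1}$ preserves markings. The marking of $D$ is generated by the images of $m_C$ along $v$ and of $m_B$ along $b$. A generator from $m_C$ lifts tautologically. For a generator $b \circ k$ with $k \in m_B(K)$, I use that $w$ is a weak equivalence of marked objects, so $\gamma(w)^{-1}\gamma(k)$ is marked in $\gamma(A)$ and therefore equals $\gamma(a)$ for some marked $a : i(K) \to A$; then $c \circ a$ is marked in $C$, and the pushout identity $v \circ c = b \circ w$ gives $\gamma(v)\,\gamma(c\,a) = \gamma(b)\,\gamma(w\,a) = \gamma(b)\,\gamma(k) = \gamma(b\,k)$, whence $\gamma(v)^{-1}\gamma(b\,k) = \gamma(c\,a)$ is marked. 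This proves left properness.

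For \ref{itm:marked-simplicial}, I first transport the tensoring, cotensoring and enrichment of $\mathcal{M}$ to $\mathcal{M}^i$, leaving the underlying objects unchanged and dictating the markings by the desired adjunctions: the marking of $X \otimes G$ is generated by $(X \otimes g)\circ k$ for objects $g$ of $G$ and marked $k$; the marking of $X^G$ consists of those maps all of whose evaluations $i(K) \to X^G \to X$ at objects of $G$ are marked; and $\underline{\mathcal{M}^i}(X,Y)$ is the full subgroupoid of $\underline{\mathcal{M}}(UX,UY)$ spanned by the marking-preserving maps. The same computation on both sides then yields the tensor–cotensor–hom adjunction bijections and their coherence from those of $\mathcal{M}$. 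What makes the full-subgroupoid description consistent is that a homotopy between two marking-preserving maps is itself marking-preserving: the walking isomorphism $\mathbb{I}$ has only its two objects, so the marking of $A \otimes \mathbb{I}$ is generated by the two endpoint insertions, and a map out of $A \otimes \mathbb{I}$ preserves markings as soon as its endpoint restrictions do. The unit axiom is immediate since the monoidal unit $\ast$ of $\mathrm{Gpd}$ is cofibrant. Moreover $(-)^\flat$, $U$, $(-)^\sharp$ are $\mathrm{Gpd}$-functors: $U$ includes hom-groupoids, while $X^\flat$ and $X^\sharp$ carry the empty and maximal markings, so that $\underline{\mathcal{M}^i}(X^\flat, Y) \cong \underline{\mathcal{M}}(X, UY) \cong \underline{\mathcal{M}^i}(X, Y^\sharp)$; being already Quillen adjunctions, they lift to Quillen $\mathrm{Gpd}$-adjunctions.

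It remains to verify that $\otimes$ is a left Quillen bifunctor. Since the pushout-product is cocontinuous in the $\mathrm{Gpd}$-variable, it suffices to treat the generators $\emptyset \to \ast$, $\partial\mathbb{I} \to \mathbb{I}$, $\ast \to \mathbb{I}$ of $\mathrm{Gpd}$ while letting $f : A \to B$ range over all (trivial) cofibrations. That every $f \mathbin\square j$ is a cofibration is automatic, because $U$ preserves tensors and colimits, so $U(f \mathbin\square j) = U(f) \mathbin\square j$ is a cofibration in $\mathcal{M}$. For the weak-equivalence clauses, $f \mathbin\square (\emptyset \to \ast)$ reproduces $f$, leaving $f \mathbin\square(\ast \to \mathbb{I})$ for cofibrations $f$ and $f \mathbin\square(\partial\mathbb{I}\to\mathbb{I})$ for trivial cofibrations $f$; in both cases $U(f \mathbin\square j)$ is a trivial cofibration in $\mathcal{M}$ by the pushout-product axiom there, so only the marking condition on $\gamma(f \mathbin\square j)^{-1}$ is at issue. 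Writing $q$ for the map, the marking of $B \otimes \mathbb{I}$ is generated by $(B\otimes 0)\circ k$ and $(B\otimes 1)\circ k$ with $k \in m_B$; for $\partial\mathbb{I}\to\mathbb{I}$ the domain of $q$ contains a copy of $B$ at each endpoint, so both families lift directly, while for $\ast\to\mathbb{I}$ only the $0$-endpoint copy is present, the $1$-family being handled by the isomorphism $0 \cong 1$ of $\mathbb{I}$, which yields a homotopy $(B\otimes 1)\circ k \simeq (B\otimes 0)\circ k$ that $\gamma$ promotes to an equality via the fibrant-replacement argument of the preceding remark (using cofibrancy of $i(K)$). I expect this last point to be the main obstacle: the argument must be carried out entirely in the homotopy category with its induced markings, and only the correct choice of marking on the tensor — generated solely by the object-insertions — makes both the well-definedness of the enrichment and this weak-equivalence clause go through.
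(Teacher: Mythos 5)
Your proof is correct and follows essentially the same route as the paper's: left properness via the observation that the marking of the pushout is generated by the images of the two legs (reflecting the $m_B$-generators back through the weak equivalence), and the enrichment via full subgroupoids of marking-preserving maps with exactly the same markings on tensors and powers. The one substantive deviation is your reduction of the acyclicity part of the pushout-product axiom to generating cofibrations of $\mathrm{Gpd}$, where the paper argues directly for arbitrary (trivial) cofibrations of groupoids; the reduction is legitimate given the two-variable adjunction, but your generator list is incomplete --- the canonical model structure on $\mathrm{Gpd}$ also requires the bijective-on-objects cofibration collapsing a freely adjoined parallel pair of isomorphisms onto $\mathcal{I}$, which is what forces trivial fibrations to be faithful. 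That omitted case is disposed of by your own ``both endpoints already lie in the pushout-product domain'' argument, since that map is bijective on objects, so nothing breaks.
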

\begin{proof}
  \emph{\ref{itm:marked-left-proper}}.
  Let 
  \begin{equation}
    \begin{tikzcd}
      X \arrow[r, "g"] \arrow[d, "f"] \arrow[dr, phantom, near end, "\ulcorner"] & Y_2 \arrow[d, "f'"] \\
      Y_1 \arrow[r, "g'"] & Z
    \end{tikzcd}
  \end{equation}
  be a pushout square in $\mathcal{M}^i$ such that $f$ is a weak equivalence.
  $\mathcal{M}$ is left proper, so $\gamma(f')$ is invertible as a map in $\operatorname{Ho} \mathcal{M}$.

  A map $k : i(K) \rightarrow U(Z)$ is marked if and only if it factors via a marked map $k_1 : i(K) \rightarrow U(Y_1)$ or via a marked map $k_2 : i(K) \rightarrow U(Y_2)$.
  In the first case,
  \begin{equation}
    \gamma(f')^{-1} \circ \gamma(k) = \gamma(g) \circ \gamma(f)^{-1} \circ \gamma(k_1),
  \end{equation}
  which is marked because $f$ is a weak equivalence.
  Otherwise
  \begin{equation}
    \gamma(f')^{-1} \gamma(k) = \gamma(k_2),
  \end{equation}
  which is also marked.
  We have shown that $\gamma(f')$ is an isomorphism of marked objects in $\operatorname{Ho} \mathcal{M}$, thus $f'$ is a weak equivalence.

  \emph{\ref{itm:marked-simplicial}}.
  Let $X$ and $Y$ be marked objects.
  We define the mapping groupoid $\mathcal{M}^i(X, Y)$ as the full subgroupoid of $\mathcal{M}(U(X), U(Y))$ of marking preserving maps.

  $\mathcal{M}^i$ is complete and cocomplete as a 1-category.
  Thus if we construct tensors $\mathcal{G} \otimes X$ and powers $X^\mathcal{G}$ for all $X \in \operatorname{Ob} \mathcal{M}$ and $\mathcal{G} \in \operatorname{Ob} \mathrm{Gpd}$ it follows that $\mathcal{M}^i$ is also complete and cocomplete as a $\mathrm{Gpd}$-category.
  The underlying object of powers and copowers is constructed in $\mathcal{M}$, i.e.\@ $G(\mathcal{G} \otimes X) = \mathcal{G} \otimes G(X)$ and $G(X^\mathcal{G}) = G(X)^\mathcal{G}$.
  A map $k : i(K) \rightarrow X^\mathcal{G}$ is marked if and only if the composite
  \begin{equation}
    \begin{tikzcd}
      i(K) \arrow[r, "k"] & G(X)^\mathcal{G} \arrow[r, "X^v"] & G(X)^1 = G(X)
    \end{tikzcd}
  \end{equation}
  is marked for every $v \in \operatorname{Ob} \mathcal{G}$ (which we identify with a map $v : 1 \rightarrow \mathcal{G}$).
  Similarly, a map $k : i(K) \rightarrow \mathcal{G} \otimes X$ is marked if and only if it factors as
  \begin{equation}
    \begin{tikzcd}
      i(K) \arrow[r, "k_0"] & G(X) = 1 \otimes G(X) \arrow[r, "v \mathbin{\otimes} \mathrm{id}"] & \mathcal{G} \otimes G(X)
    \end{tikzcd}
  \end{equation}
  for some object $v$ in $\mathcal{G}$ and marked $k_0$.
  It follows by \citet[Theorem 4.85]{basic-concepts-of-enriched-category-theory} from the preservation of tensors and powers by $U$ that the 1-categorical adjunctions $(-)^\flat \dashv U$ and $U \dashv (-)^\flat$ extend to $\mathrm{Gpd}$-adjunctions.

  It remains to show that the tensoring $\mathrm{Gpd} \times \mathcal{M}^i \rightarrow \mathcal{M}^i$ is a Quillen bifunctor.
  For this we need to prove that if $f : \mathcal{G} \rightarrowtail \mathcal{H}$ is a cofibration of groupoids and $g : X \rightarrowtail Y$ is a cofibration of marked objects, then their pushout-product
  \begin{equation}
    f \mathbin{\square} g : \mathcal{G} \otimes Y \amalg_{\mathcal{G} \otimes X} \mathcal{H} \otimes X \rightarrow \mathcal{H} \otimes Y
  \end{equation}
  is a cofibration, and that it is a weak equivalence if either $f$ or $g$ is furthermore a weak equivalence.
  The first part follows directly from the same property for the $\mathrm{Gpd}$-enrichment of $\mathcal{M}$ and the fact that $U$ preserves tensors and pushouts, and reflects cofibrations.

  In the second part we have in both cases that $f \mathbin{\square} g$ is a weak equivalence in $\mathcal{M}$.
  Thus we only need to show that $f \mathbin{\square} g$ reflects a given marked morphism $k : i(K) \rightarrow \mathcal{H} \otimes G(Y)$ in $\operatorname{Ho} \mathcal{M}$.
  It follows from the construction of $\mathcal{H} \otimes Y$ that for any such $k$ there exists $w \in \operatorname{Ob} \mathcal{H}$ such that $k = (w \otimes \mathrm{id}) \circ k_0$ for some marked map $k_0 : i(K) \rightarrow G(Y)$.

  Assume first that $f$ is a trivial cofibration, i.e.\@ an equivalence of groupoids that is injective on objects.
  Then there exists $v \in \operatorname{Ob} \mathcal{G}$ such that $f(v)$ and $w$ are isomorphic objects of $\mathcal{H}$.
  $k$ is (left) homotopic to $(f(v) \otimes \mathrm{id}) \circ k_0$, which factors via the marked map $(v \otimes \mathrm{id}) \circ k_0 : i(K) \rightarrow \mathcal{G} \otimes G(Y)$.
  It follows that $\gamma(f \mathbin{\square} g)$ reflects marked morphisms.

  Now assume that $g$ is a trivial cofibration.
  Then $\gamma(g)$ reflects marked maps, i.e.\@ there exists a marked map $h_0 : i(K) \rightarrow G(X)$ such that $\gamma(g) \circ \gamma(h_0) = \gamma(h)$.
  Thus the equivalence class of $(w \otimes \mathrm{id}) \circ h_0 : i(K) \rightarrow \mathcal{H} \otimes X$ in $\operatorname{Ho} \mathcal{M}$ is marked and mapped to $\gamma(k)$ under postcomposition by $\gamma(f)$.
\end{proof}

In the semantics of logic, one usually defines the notion of model of a logical theory in two steps:
First a notion of structure is defined that interprets the theory's signature, i.e.\@ the function and relation symbols that occur in its axioms.
Then one defines what it means for such a structure to satisfy a formula over the signature, and a model is a structure of the theory's signature which satisfies the theory's axioms.
For very well-behaved logics such as Lawvere theories, there is a method of freely turning structures into models of the theory, so that the category of models is a reflective subcategory of the category of structures.

By analogy, lcc-marked categories correspond to the structures of the signature of lcc categories.
The model structure of $\mathrm{Cat}^{\mathrm{lcc}}$ ensures that markings respect the homotopy theory of $\mathrm{Cat}$, in that the choice of marking is only relevant up to isomorphism of diagrams.
However, the model structure does not encode the universal property that marked diagrams are ultimately supposed to satisfy.
To obtain the analogue of the category of models for a logical theory, we now define a reflective subcategory of $\mathrm{Cat}^{\mathrm{lcc}}$.
The technical tool to accomplish this is a left Bousfield localization at a set $S$ of morphisms in $\mathrm{Cat}^{\mathrm{lcc}}$.
$S$ corresponds to the set of axioms of a logical theory.
We thus need to define $S$ in such a way that an lcc-marked category is lcc if and only if it has the right lifting property against the morphisms in $S$ such that lifts are determined uniquely up to unique isomorphism.

$\mathrm{Cat}$ is a combinatorial and left proper model $\mathrm{Gpd}$-category with mapping groupoids $\mathrm{Cat}(\mathcal{C}, \mathcal{D})$ given by sets of functors and their natural isomorphisms.
Thus $\mathrm{Cat}^{\mathrm{lcc}}$ has the structure of a combinatorial and left proper model $\mathrm{Gpd}$-category by Lemma \ref{lem:simplicial-marked-objects}.
It follows that the left Bousfield localization at any (small) set of maps exists by \citet[Theorem 4.1.1]{hirschhorn}.

\begin{definition}
  \label{def:lcc-axioms}
  The model category $\mathrm{Lcc}$ of \emph{lcc sketches} is the left Bousfield localization of the model category of $\mathrm{lcc}$-marked categories at the following morphisms.
  \begin{itemize}
    \item
      The morphism $\mathrm{tm}_1$ given by the unique map from the empty category to the marked category with a single, $\mathrm{Tm}$-marked object.
      $\mathrm{tm}_1$ corresponds to the essentially unique existence of a terminal object.
    \item
      The morphism $\mathrm{tm}_2$ given by the inclusion of the category with two objects
      \begin{equation}
        \begin{tikzcd}
          \cdot & t
        \end{tikzcd}
      \end{equation}
      such that $t$ is $\mathrm{Tm}$-marked into
      \begin{equation}
          \begin{tikzcd}
            \cdot \arrow[r] & t.
          \end{tikzcd}
      \end{equation}
      $\mathrm{tm}_2$ corresponds to the universal property of terminal objects.
    \item
      \label{itm:pbs-commute}
      The morphism $\mathrm{pb}_0$ given by the quotient map from the free-standing non-commutative and $\mathrm{Pb}$-marked square
      \begin{equation}
        \begin{tikzcd}
          \cdot \arrow[r, "p_2"] \arrow[d, "p_1"] & \cdot \arrow[d, "f_2"] \\
          \cdot \arrow[r, "f_1"] & \cdot
        \end{tikzcd}
      \end{equation}
      to the commuting square
      \begin{equation}
        \begin{tikzcd}
          \cdot \arrow[dr, phantom, "\circlearrowleft"] \arrow[r, "p_2"] \arrow[d, "p_1"] & \cdot \arrow[d, "f_2"] \\
          \cdot \arrow[r, "f_1"] & \cdot
        \end{tikzcd}
      \end{equation}
      (which is still marked via $\mathrm{Pb}$).
      $\mathrm{pb}_0$ corresponds to the commutativity of pullback squares.
    \item
      \label{itm:pbs-exist}
      The morphism $\mathrm{pb}_1$ given by the inclusion of the cospan
      \begin{equation}
        \begin{tikzcd}
            & \cdot \arrow[d, "f_2"] \\
            \cdot \arrow[r, "f_1"] & \cdot 
        \end{tikzcd}
      \end{equation}
      with no markings into the non-commutative square
      \begin{equation}
        \begin{tikzcd}
          \cdot \arrow[r, "p_2"] \arrow[d, "p_1"] & \cdot \arrow[d, "f_2"] \\
          \cdot \arrow[r, "f_1"] & \cdot
        \end{tikzcd}
      \end{equation}
      which is marked via $\mathrm{Pb}$.
      $\mathrm{pb}_1$ corresponds to the essentially unique existence of pullback squares.
    \item
      \label{itm:pbs-factorizations}
      The morphism $\mathrm{pb}_2$ given by the inclusion of
      \begin{equation}
        \begin{tikzcd}
          \cdot \arrow[ddr, bend right, "q_1"'] \arrow[drr, bend left, "q_2"] \arrow[dr, phantom, "\circlearrowleft", near end] \\
          & \cdot \arrow[d, "p_1"] \arrow[r, "p_2"] & \cdot \arrow[d, "f_2"] \\
          & \cdot \arrow[r, "f_1"] & \cdot
        \end{tikzcd}
      \end{equation}
      in which the lower right square is non-commutative and marked via $\mathrm{Pb}$, into the diagram
      \begin{equation}
        \begin{tikzcd}
          \cdot \arrow[ddr, bend right, "q_1"'] \arrow[ddr, phantom, "\circlearrowleft"] \arrow[drr, bend left, "q_2"] \arrow[drr, phantom, "\circlearrowleft"] \arrow[dr] \\
          & \cdot \arrow[d, "p_1"] \arrow[r, "p_2"] & \cdot \arrow[d, "f_2"] \\
          & \cdot \arrow[r, "f_1"] & \cdot
        \end{tikzcd}
      \end{equation}
      in which the indicated triangles commute.
      $\mathrm{pb}_2$ corresponds to the universal property of pullback squares.
    \item
      The morphism $\mathrm{pi}_0$ given by the quotient map of the non-commutative diagram
      \begin{equation}
        \begin{tikzcd}
          \cdot \arrow[ddr, bend right, "p_1"'] \arrow[drr, bend left, "p_2"] \arrow[dr, "\varepsilon"] \\
          & \cdot \arrow[d, "g"] & \cdot \arrow[d, "f_2"] \\
          & \cdot \arrow[r, "f_1"] & \cdot
        \end{tikzcd}
      \end{equation}
      in which the square made of the $p_i$ and $f_i$ is marked via $\mathrm{Pb}$ and the whole diagrams is marked via $\mathrm{Pi}$, to
      \begin{equation}
        \begin{tikzcd}
          \cdot \arrow[ddr, bend right, "p_1"'] \arrow[ddr, phantom, "\circlearrowleft"] \arrow[drr, bend left, "p_2"] \arrow[dr, "\varepsilon"] \\
          & \cdot \arrow[d, "g"] & \cdot \arrow[d, "f_2"] \\
          & \cdot \arrow[r, "f_1"] & \cdot
        \end{tikzcd}
      \end{equation}
      in which the indicated triangle commutes.
      $\mathrm{pi}_0$ corresponds to the requirement that the evaluation map $\varepsilon$ of the dependent product $f_2 = \Pi_{f_1}(g)$ is a morphism in the slice category over $\operatorname{cod} g$.
    \item
      The morphism $\mathrm{pi}_1$ given by the inclusion of a composable pair of morphisms
      \begin{equation}
        \begin{tikzcd}
          & \cdot \arrow[d, "g"] \\
          & \cdot \arrow[r, "f_1"] & \cdot
        \end{tikzcd}
      \end{equation}
      into the non-commutative diagram
      \begin{equation}
        \begin{tikzcd}
          \cdot \arrow[ddr, bend right, "p_1"] \arrow[drr, bend left, "p_2"] \arrow[dr, "\varepsilon"] \\
          & \cdot \arrow[d, "g"] & \cdot \arrow[d, "f_2"] \\
          & \cdot \arrow[r, "f_1"] & \cdot
        \end{tikzcd}
      \end{equation}
      which is marked via $\mathrm{Pi}$ (and hence the outer square is marked via $\mathrm{Pb}$).
      $\mathrm{pi}_1$ corresponds to the essentially unique existence of dependent products $f_2 = \Pi_{f_1}(g)$ and their evaluation maps $\varepsilon$.
    \item
      The morphism $\mathrm{pi}_2$ given by the inclusion of the diagram
      \begin{equation}
        \begin{tikzcd}
          \cdot \arrow[dddrr, bend right, "p'_1"'] \arrow[ddrrrr, bend left, "p'_2"] \arrow[ddrr, bend right, "e", pos=0.4] & \\
          & \cdot \arrow[ddr, bend right, "p_1", near end] \arrow[drr, bend left, "p_2"] \arrow[dr, "\varepsilon"] \\
          & & \cdot \arrow[d, "g"] & \cdot \arrow[d, "f_2"] & \cdot \arrow[dl, "f'_2"] \\
          & & \cdot \arrow[r, "f_1"] & \cdot
        \end{tikzcd}
      \end{equation}
      in which the square given by the $f_i$ and $p_i$ is marked via $\mathrm{Pb}$, the subdiagram given by the $f_i, p_i, g$ and $\varepsilon$ is marked via $\mathrm{Pi}$, the square given by $f_1, f'_2$ and the $p'_i$ is marked via $\mathrm{Pb}$, and $e \circ g = p'_1$, into the diagram
      \begin{equation}
        \begin{tikzcd}
          \cdot \arrow[dr, "u"] \arrow[dddrr, bend right, "p'_1"'] \arrow[ddrrrr, bend left, "p'_2"] \arrow[ddrr, bend right, "e", pos=0.4] & \\
          & \cdot \arrow[ddr, bend right, "p_1" near end] \arrow[drr, bend left, "p_2"] \arrow[dr, "\varepsilon"] \\
          & & \cdot \arrow[d, "g"] & \cdot \arrow[d, "f_2"] & \cdot \arrow[dl, "f'_2"] \arrow[l] \\
          & & \cdot \arrow[r, "f_1"] & \cdot
        \end{tikzcd}
      \end{equation}
      in which $u$ commutes with the $p_i$ and $p'_i$, and $e = \varepsilon \circ u$.
      $\mathrm{pi}_2$ corresponds to the universal property of the dependent product $f_2 = \Pi_{f_1}(g)$.
  \end{itemize}
\end{definition}

\begin{proposition}
  \label{prop:lcc-model-cat}
  The model category $\mathrm{Lcc}$ is a model for the $(2, 1)$-category of lcc categories and lcc functors:
  \begin{enumerate}[label={(\arabic*)}]
    \item
      \label{itm:lcc-fibrant-objects}
      An object $\mathcal{C} \in \mathrm{Lcc}$ is fibrant if and only if its underlying category is lcc and
      \begin{itemize}
        \item
          a map $i(\mathrm{Tm}) \rightarrow U(\mathcal{C})$ is marked if and only if its image is a terminal object;
        \item
          a map $i(\mathrm{Pb}) \rightarrow U(\mathcal{C})$ is marked if and only if its image is a  pullback square; and
        \item
          a map $i(\mathrm{Pi}) \rightarrow U(\mathcal{C})$ is marked if and only if its image is (isomorphic to) the diagram of the evaluation map of a dependent product.
      \end{itemize}
    \item
      \label{itm:lcc-homotopy-category}
      The homotopy category of $\mathrm{Lcc}$ is equivalent to the category of lcc categories and isomorphism classes of lcc functors.
    \item
      \label{itm:lcc-homotopy-function-complexes}
      The homotopy function complexes of fibrant lcc sketches are given by the groupoids of lcc functors and their natural isomorphisms.
  \end{enumerate}
\end{proposition}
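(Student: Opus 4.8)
The plan is to leverage the standard theory of left Bousfield localizations together with the $\mathrm{Gpd}$-enrichment established in Lemma~\ref{lem:simplicial-marked-objects}. Recall that a fibrant object of the localization $\mathrm{Lcc} = L_S \mathrm{Cat}^{\mathrm{lcc}}$, where $S = \{\mathrm{tm}_1, \mathrm{tm}_2, \mathrm{pb}_0, \mathrm{pb}_1, \mathrm{pb}_2, \mathrm{pi}_0, \mathrm{pi}_1, \mathrm{pi}_2\}$ is the set of generators of Definition~\ref{def:lcc-axioms}, is precisely an object that is fibrant in $\mathrm{Cat}^{\mathrm{lcc}}$ and $S$-local. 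Because every object of $\mathrm{Cat}$ is cofibrant in its canonical model structure, every object of $\mathrm{Cat}^{\mathrm{lcc}}$ is cofibrant as well; hence for fibrant $W$ the homotopy function complex $\mathrm{map}(A, W)$ is computed by the mapping groupoid $\mathrm{Cat}^{\mathrm{lcc}}(A, W)$ of marking-preserving functors and their natural isomorphisms. I would therefore prove part~\ref{itm:lcc-fibrant-objects} first, since both the characterization of $S$-local objects and the computation of homotopy function complexes in parts~\ref{itm:lcc-homotopy-category} and~\ref{itm:lcc-homotopy-function-complexes} reduce to analyzing these mapping groupoids.

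For part~\ref{itm:lcc-fibrant-objects} I would first invoke Theorem~\ref{th:marked-model-category}: an lcc-marked category is fibrant in $\mathrm{Cat}^{\mathrm{lcc}}$ iff its underlying category is fibrant in $\mathrm{Cat}$ (automatic) and its markings are stable under homotopy, i.e.\ under natural isomorphism of diagrams. Then $S$-locality of such a $W$ amounts, for each generator $f : A \to B$, to the functor $\mathrm{Cat}^{\mathrm{lcc}}(B, W) \to \mathrm{Cat}^{\mathrm{lcc}}(A, W)$ being an equivalence of groupoids, and I would read off each generator in turn. The quotient maps $\mathrm{pb}_0$ and $\mathrm{pi}_0$ present full subgroupoid inclusions whose essential surjectivity forces every marked square, respectively $\mathrm{Pi}$-diagram, to commute where required. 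The inclusions $\mathrm{tm}_1$, $\mathrm{pb}_1$ and $\mathrm{pi}_1$ of a shape into its completion force the essentially unique existence of a marked terminal object, of a marked pullback over each cospan, and of a marked dependent product over each composable pair. Finally, the inclusions $\mathrm{tm}_2$, $\mathrm{pb}_2$ and $\mathrm{pi}_2$, which adjoin a mediating arrow, force the corresponding universal properties. Combining these local conditions with homotopy-stability of markings yields the stated biconditionals: for instance, any terminal object is isomorphic to the marked terminal supplied by $\mathrm{tm}_1$ and hence marked by stability, while conversely $\mathrm{tm}_2$ shows any marked object is terminal; the analogous arguments for $\mathrm{Pb}$ and $\mathrm{Pi}$ identify marked squares with pullbacks and marked $\mathrm{Pi}$-diagrams with diagrams isomorphic to evaluation maps of dependent products. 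The existence clauses then supply a terminal object and all pullbacks, hence all finite limits, and the $\mathrm{pi}$-conditions exhibit right adjoints $\Pi_{f_1}$ to the pullback functors $f_1^*$, so the underlying category is lcc.

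For part~\ref{itm:lcc-homotopy-function-complexes} I would note that left Bousfield localization leaves the cofibrations and the $\mathrm{Gpd}$-enrichment unchanged (\citet[Theorem~4.1.1]{hirschhorn}), so $\mathrm{Lcc}$ is again a model $\mathrm{Gpd}$-category with all objects cofibrant, and for fibrant lcc sketches $\mathcal{C}, \mathcal{D}$ the homotopy function complex is once more the mapping groupoid $\mathrm{Cat}^{\mathrm{lcc}}(\mathcal{C}, \mathcal{D})$. By part~\ref{itm:lcc-fibrant-objects} the markings of $\mathcal{C}$ and $\mathcal{D}$ are isomorphism-closed classes of genuine terminals, pullbacks and dependent products, so a functor preserves markings iff it preserves these structures up to isomorphism, i.e.\ iff it is an lcc functor; hence the mapping groupoid is the groupoid of lcc functors and their natural isomorphisms. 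Part~\ref{itm:lcc-homotopy-category} then follows: $\operatorname{Ho} \mathrm{Lcc}$ is the full subcategory of $\operatorname{Ho} \mathrm{Cat}^{\mathrm{lcc}}$ on the $S$-local objects, the assignment of its fibrant replacement to a sketch is essentially surjective onto lcc categories (every lcc category, equipped with its canonical markings, is a fibrant lcc sketch), and the hom-sets are $\pi_0$ of the homotopy function complexes, which by part~\ref{itm:lcc-homotopy-function-complexes} are the isomorphism classes of lcc functors.

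The main obstacle I anticipate is the generator-by-generator translation in part~\ref{itm:lcc-fibrant-objects}, and in particular the $\mathrm{pi}$-generators, whose shapes are comparatively large and encode the adjunction $f_1^* \dashv \Pi_{f_1}$ through its counit, namely the evaluation map together with a pullback witnessing that the counit lives in the correct slice. Verifying that an equivalence of the associated mapping groupoids is exactly the existence-and-universal-property of dependent products, and that the conjunction of all eight local conditions genuinely amounts to local cartesian closure rather than some weaker fragment, is where the real care is needed; the remaining bookkeeping, namely cofibrancy, iso-stability of markings, and the $\pi_0$-computations, is routine given Theorem~\ref{th:marked-model-category} and Lemma~\ref{lem:simplicial-marked-objects}.
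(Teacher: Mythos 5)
Your proposal is correct and follows essentially the same route as the paper: deduce parts \ref{itm:lcc-homotopy-category} and \ref{itm:lcc-homotopy-function-complexes} from part \ref{itm:lcc-fibrant-objects} together with the $\mathrm{Gpd}$-enrichment of Lemma \ref{lem:simplicial-marked-objects} (homotopy function complexes between cofibrant and fibrant objects being nerves of the mapping groupoids), and establish part \ref{itm:lcc-fibrant-objects} by combining the characterization of fibrant marked objects from Theorem \ref{th:marked-model-category} with a generator-by-generator unfolding of $S$-locality as an equivalence (in fact trivial fibration) of mapping groupoids. The paper writes out only the pullback case of this unfolding in detail, but your sketch of the remaining generators matches its intent.
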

\begin{proof}
  Homotopy function complexes of maps from cofibrant to fibrant objects in a model $\mathrm{Gpd}$-category can be computed as nerves of the groupoid enrichment.
  Thus \ref{itm:lcc-homotopy-category} and \ref{itm:lcc-homotopy-function-complexes} follow from \ref{itm:lcc-fibrant-objects} and Lemma \ref{lem:simplicial-marked-objects}.

  By \citet[Theorem 4.1.1]{hirschhorn}, the fibrant objects of the left Bousfield localization $\mathrm{Lcc} = S^{-1} \mathrm{Cat}^{\mathrm{lcc}}$ at the set $S$ of morphisms from Definition \ref{def:lcc-axioms} are precisely the fibrant lcc-marked categories $\mathcal{C}$ which are $f$-local for all $f \in S$.
  The verification of the equivalence asserted in \ref{itm:lcc-fibrant-objects} can thus be split up into three parts corresponding to terminal objects, pullback squares and dependent products.
  As the three proofs are very similar, we give only the proof for pullbacks.
  For this we must show that if $\mathcal{C}$ is a $\mathrm{Pb}$-marked category, then marked maps $i(\mathrm{Pb}) \rightarrow \mathcal{C}$ are stable under isomorphisms and $\mathcal{C}$ is $\mathrm{pb}_i$-local for $i = 0, 1, 2$ if and only if the underlying category $U(\mathcal{C})$ has all pullbacks and maps $i(\mathrm{Pb}) \rightarrow U(\mathcal{C})$ are marked if and only if their images are pullbacks.

  Let $\mathcal{M}$ be a model $\mathrm{Gpd}$-category.
  The homotopy function complexes of maps from cofibrant to fibrant objects in $\mathcal{M}$ can be computed as nerves of mapping groupoids.
  The nerve functor $N : \mathrm{Gpd} \rightarrow \mathrm{sSet}$ preserves and reflects trivial fibrations.
  Thus if $f : A \rightarrow B$ is a morphism of cofibrant objects $A, B$, then a fibrant object $X$ is $f$-local if and only if
  \begin{equation}
    \label{eq:locality}
    \mathcal{M}(f, X) : \mathcal{M}(B, X) \rightarrow \mathcal{M}(A, X)
  \end{equation}
  is a trivial fibration of groupoids, i.e. an equivalence that is surjective on objects.
  
  Unfolding this we obtain the following characterization of $\mathrm{pb}_i$-locality for a fibrant $\mathrm{Pb}$-marked category:
  \begin{itemize}
    \item
      $\mathcal{C}$ is $\mathrm{pb}_0$-local if and only if all $\mathrm{Pb}$-marked squares commute.
    \item
      $\mathcal{C}$ is $\mathrm{pb}_1$-local if and only if every cospan can be completed to a $\mathrm{Pb}$-marked square, and isomorphisms of cospans can be lifted uniquely to isomorphisms of $\mathrm{Pb}$-marked squares completing them.
    \item
      $\mathcal{C}$ is $\mathrm{pb}_2$-local if and only if every commutative square completing the lower cospan of a $\mathrm{Pb}$-marked square factors via the $\mathrm{Pb}$-marked square, and every factorization is compatible with natural isomorphisms of diagrams.
      By compatibility with the identity isomorphism, the factorization is unique.
  \end{itemize}
  If these conditions are satisfied, then every cospan in $\mathcal{C}$ can be completed to a pullback square which is $\mathrm{Pb}$-marked, and $\mathrm{Pb}$-marked squares are pullbacks.
  By fibrancy of $\mathcal{C}$, it follows that precisely the pullback squares are $\mathrm{Pb}$-marked.

  Conversely, if we take as $\mathrm{Pb}$-marked squares the pullbacks in a category $\mathcal{C}$ with all pullbacks, then $\mathrm{Pb}$-marked squares will be stable under isomorphism, and, by the characterization above, $\mathcal{C}$ will be $\mathrm{pb}_i$-local for all $i$.
\end{proof}

\section{Strict lcc categories}
\label{sec:slcc}

A naive interpretation of type theory in the fibrant objects of $\mathrm{Lcc}$ as outlined in the introduction suffers from very similar issues as Seely's original version:
Type theoretic structure is preserved up to equality by substitution, but lcc functors preserve the corresponding objects with universal properties only up to isomorphism.

In this section, we explore an alternative model categorical presentation of the higher category of lcc categories.
Our goal is to rectify the deficiency that lcc functors do not preserve lcc structure up to equality.
Indeed, lcc structure on fibrant lcc sketches is induced by a right lifting \emph{property}, so there is no preferred choice of lcc structure on fibrant lcc sketches. 
We can thus not even state the required preservation up to equality.
To be able to speak of distinguished choice of lcc structure, we employ the following technical device.
\begin{definition}[\citet{algebraic-models}]
  \label{def:algebraically-fibrant-objects}
  Let $\mathcal{M}$ be a combinatorial model category and let $J$ be a set of trivial cofibrations such that objects with the right lifting property against $J$ are fibrant.
  An \emph{algebraically fibrant object} of $\mathcal{M}$ (with respect to $J$) consists of an object $G(X) \in \operatorname{Ob} \mathcal{M}$ equipped with a choice of lifts against all morphisms $j \in J$.
  Thus $X$ comes with maps $\ell_X({j, a}) : B \rightarrow G(X)$ for all $j : A \rightarrow B$ in $J$ and $a : A \rightarrow G(X)$ in $\mathcal{M}$ such that
  \begin{equation}
    \begin{tikzcd}
      A \arrow[d, "j"] \arrow[r, "a"] & G(X) \\
      B \arrow[ur, "{\ell_X(j, a)}"']
    \end{tikzcd}
  \end{equation}
  commutes.
  A morphism of algebraically fibrant objects $f : X \rightarrow Y$ is a morphism $f : G(X) \rightarrow G(Y)$ in $\mathcal{M}$ that preserves the choices of lifts, in the sense that $f \circ \ell_X(j, a) = \ell_Y(j, fa)$ for all $j : A \rightarrow B$ in $J$ and $a : A \rightarrow G(X)$.
  The category of algebraically fibrant objects is denoted by $\operatorname{Alg} \mathcal{M}$, and the evident forgetful functor $\operatorname{Alg} \mathcal{M} \rightarrow \mathcal{M}$ by $G$.
\end{definition}

\begin{proposition}
  \label{prop:object-generating-triv-cof-marked}
  Denote by $\mathcal{I} \in \operatorname{Ob} \mathrm{Gpd}$ the free-standing isomorphism with objects $0$ and $1$ and let $K \in \operatorname{Ob} I_\mathrm{lcc}$.
  Let $A_K, B_K$ be the lcc-marked object given by $U(A_K) = U(B_K) = \mathcal{I} \times K$ with $K \cong \{ 0 \} \times K \hookrightarrow \mathcal{I} \times K$ the only marking for $A_K$ and $K \cong \{ \varepsilon \} \times K \hookrightarrow \mathcal{I} \times K$, $\varepsilon = 0, 1$ the markings for $B_K$, and denote by $j_K : A_K \rightarrow B_K$ the obvious inclusion.

  Then $j_K$ is a trivial cofibration in $\mathrm{Cat}^{\mathrm{lcc}}$, and an object of $\mathrm{Cat}^{\mathrm{lcc}}$ is fibrant if and only if it has the right lifting property against $j_K$ for all $K$.
\end{proposition}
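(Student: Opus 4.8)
The first thing I would note is that $j_K$ is the identity on underlying categories: both $A_K$ and $B_K$ have underlying category $\mathcal{I} \times K$, and $j_K$ merely enlarges the marking by adjoining the second inclusion $\iota_1 : K \cong \{1\} \times K \hookrightarrow \mathcal{I} \times K$ to the marking containing the first inclusion $\iota_0 : K \cong \{0\} \times K \hookrightarrow \mathcal{I} \times K$. I would then recall the ingredients of Theorem \ref{th:marked-model-category} instantiated at $\mathcal{M} = \mathrm{Cat}$: cofibrations are the maps whose underlying functor is injective on objects, weak equivalences are the maps inducing an isomorphism of marked objects in $\operatorname{Ho} \mathrm{Cat}$, every object of $\mathrm{Cat}$ is both fibrant and cofibrant, homotopy of functors is natural isomorphism, and $\mathcal{I} \times K$ is a cylinder object on $K$, so that a functor $\mathcal{I} \times K \rightarrow \mathcal{D}$ is the same datum as a natural isomorphism between its two restrictions along $\iota_0$ and $\iota_1$.

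For the first claim, that $j_K$ is a trivial cofibration, I would argue as follows. Since $U(j_K) = \mathrm{id}_{\mathcal{I} \times K}$ is injective on objects, $j_K$ is a cofibration. For the weak equivalence, $\gamma(j_K)$ is the identity on the common underlying object, so I only need to check that it is an isomorphism of the induced marked objects, i.e.\@ that $\gamma(m_{A_K}) = \gamma(m_{B_K})$. The point is that the unique isomorphism $0 \cong 1$ of $\mathcal{I}$ supplies a natural isomorphism $\iota_0 \cong \iota_1$, whence $\gamma(\iota_0) = \gamma(\iota_1)$ in $\operatorname{Ho} \mathrm{Cat}$. Adjoining $\iota_1$ to the marking therefore does not enlarge the induced marking in the homotopy category (the same identification propagates through precomposition with the morphism $\mathrm{Pb} \hookrightarrow \mathrm{Pi}$ of $I_\mathrm{lcc}$), so $\gamma(j_K)$ is an isomorphism of marked objects and $j_K$ is a weak equivalence.

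For the characterization of fibrancy, the forward direction is formal: $j_K$ is a trivial cofibration by the first part, and any fibrant object has the right lifting property against all trivial cofibrations. For the converse I would unfold the lifting problem. A map $a : A_K \rightarrow \mathcal{C}$ is a functor $\alpha : \mathcal{I} \times K \rightarrow U(\mathcal{C})$ whose restriction $\alpha \iota_0$ is marked; because $U(j_K)$ is the identity, a lift against $j_K$ is precisely the assertion that this same $\alpha$ is a morphism $B_K \rightarrow \mathcal{C}$, i.e.\@ that $\alpha \iota_1$ is also marked. Reading $\alpha$ as a natural isomorphism between $k = \alpha \iota_0$ and $h = \alpha \iota_1$, the right lifting property against $j_K$ for all $K$ says exactly that whenever $k$ is marked and $k \simeq h$, then $h$ is marked; that is, the markings of $\mathcal{C}$ are stable under homotopy. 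Since every object of $\mathrm{Cat}$ is fibrant, Theorem \ref{th:marked-model-category} then yields that $\mathcal{C}$ is fibrant.

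The only genuinely substantive step, and the one I would isolate, is the translation between the combinatorial lifting property against $j_K$ and the homotopy-theoretic stability condition of Theorem \ref{th:marked-model-category}; both halves of the proposition rest on the single fact that $\mathcal{I} \times K$ is a cylinder on $K$ in $\mathrm{Cat}$ and that its two endpoint inclusions are identified in $\operatorname{Ho} \mathrm{Cat}$. Everything else is bookkeeping about underlying functors and their markings.
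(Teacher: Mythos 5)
Your proposal is correct and follows essentially the same route as the paper: cofibrancy from injectivity on objects, the weak equivalence from the fact that the two endpoint inclusions of the cylinder $\mathcal{I}\times K$ are identified in $\operatorname{Ho}\mathrm{Cat}$ (the paper phrases this as $j_K$ ``reflecting markings up to isomorphism''), and the lifting property against $j_K$ unfolding exactly to stability of markings under natural isomorphism, which is the fibrancy criterion of Theorem \ref{th:marked-model-category}. Your version is just a more explicit spelling-out of the paper's argument, including the minor point about the subfunctor condition along $\mathrm{Pb}\hookrightarrow\mathrm{Pi}$ that the paper leaves implicit.
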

\begin{proof}
  The maps $j_K$ are injective on objects and hence cofibrations, and they reflect markings up to isomorphism, hence are also weak equivalences.
  A map $a : A_K \rightarrow \mathcal{C}$ corresponds to an isomorphism of maps $a_0, a_1 : i(K) \rightarrow \mathcal{C}$ with $a_0$ marked, and $a$ can be lifted to $B_K$ if and only if $a_1$ is also marked.
  Thus $\mathcal{C}$ has the right lifting property against the $j_K$ if and only if its markings are stable under isomorphism, which is the case if and only if $\mathcal{C}$ is fibrant.
\end{proof}

\begin{proposition}
  \label{prop:object-generating-triv-cof-lcc}
  An object of $\mathrm{Lcc}$ is fibrant if and only if it has the right lifting property against all of the following morphisms, all of which are trivial cofibrations in $\mathrm{Lcc}$:
  \begin{enumerate}[label={(\arabic*)}]
    \item
      \label{itm:marking-stability}
      The maps $j_K$ of Proposition \ref{prop:object-generating-triv-cof-marked}.
    \item
      \label{itm:lcc-axioms}
      The morphisms of Definition \ref{def:lcc-axioms}.
    \item
      \label{itm:self-unions}
      The maps $\langle \mathrm{id}, \mathrm{id} \rangle : B \amalg_A B \rightarrow B$, where $A \rightarrow B$  is one of $\mathrm{tm}_2, \mathrm{pb}_2$ or $\mathrm{pi}_2$.
  \end{enumerate}
\end{proposition}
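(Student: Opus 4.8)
The plan is to handle the three classes of maps separately and to split the biconditional into the formal direction (fibrant objects lift against everything) and the substantive converse. Throughout I will use that cofibrations in $\mathrm{Cat}^{\mathrm{lcc}}$, and hence in the localization $\mathrm{Lcc}$, are exactly the maps whose underlying functor is injective on objects (Theorem \ref{th:marked-model-category}), and that a left Bousfield localization retains all cofibrations and all weak equivalences of the original model structure while turning the maps of $S$ into weak equivalences.

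First I would dispatch the trivial cofibration claims. For \ref{itm:marking-stability} this is immediate: the $j_K$ are already trivial cofibrations in $\mathrm{Cat}^{\mathrm{lcc}}$ by Proposition \ref{prop:object-generating-triv-cof-marked}, and localization preserves this. For \ref{itm:lcc-axioms} I would note that each morphism of Definition \ref{def:lcc-axioms} is injective on objects (in fact bijective, except for $\mathrm{tm}_1$, $\mathrm{pb}_1$, $\mathrm{pi}_1$), hence a cofibration, and is a weak equivalence in $\mathrm{Lcc}$ by the very definition of the localization at $S$. For \ref{itm:self-unions} the key observation is that $\mathrm{tm}_2$, $\mathrm{pb}_2$ and $\mathrm{pi}_2$ are bijective on objects, so the pushout $B \amalg_A B$ has the same object set as $B$ and the codiagonal $\langle \mathrm{id}, \mathrm{id} \rangle$ is bijective on objects, in particular a cofibration. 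That it is a weak equivalence follows by two-out-of-three: since $A \rightarrow B$ is a trivial cofibration in $\mathrm{Lcc}$ by \ref{itm:lcc-axioms}, its cobase change $B \rightarrow B \amalg_A B$ along the second leg is again a trivial cofibration, and $\langle \mathrm{id}, \mathrm{id} \rangle$ composed with this inclusion is $\mathrm{id}_B$.

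The biconditional then follows. Any fibrant object lifts against every trivial cofibration, which gives one direction at once. For the converse, suppose $\mathcal{C}$ lifts against \ref{itm:marking-stability}--\ref{itm:self-unions}. Lifting against the $j_K$ makes $\mathcal{C}$ fibrant in $\mathrm{Cat}^{\mathrm{lcc}}$ by Proposition \ref{prop:object-generating-triv-cof-marked}, so its markings are closed under isomorphism. I would then reread the remaining lifting properties exactly as in the proof of Proposition \ref{prop:lcc-model-cat}, but now as strict $1$-categorical statements rather than locality conditions: lifting against $\mathrm{pb}_0$ says marked squares commute, against $\mathrm{pb}_1$ that every cospan has a marked completion, against $\mathrm{pb}_2$ that every competing cone factors through a marked square, and against the codiagonal of $\mathrm{pb}_2$ that this factorization is unique. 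Together these say that marked squares are precisely the pullbacks, using iso-closure of the markings to upgrade ``every cospan has a marked pullback'' to ``every pullback is marked''. The analogous readings for the $\mathrm{tm}$- and $\mathrm{pi}$-families show that $\mathcal{C}$ has a terminal object and all dependent products with the universal diagrams marked, so $\mathcal{C}$ is lcc with markings the universal objects and is therefore fibrant by Proposition \ref{prop:lcc-model-cat}\ref{itm:lcc-fibrant-objects}.

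The main obstacle is the converse, and specifically recognizing that the role of the codiagonals in \ref{itm:self-unions} is to supply the uniqueness half of the universal properties: a lift against $\langle \mathrm{id}, \mathrm{id} \rangle : B \amalg_A B \rightarrow B$ is exactly a witness that two maps $B \rightarrow \mathcal{C}$ agreeing on $A$ coincide. Identifying $A \rightarrow B$ concretely for each of $\mathrm{tm}_2$, $\mathrm{pb}_2$, $\mathrm{pi}_2$ and checking that ``agreement on $A$'' encodes precisely a pair of factorizations of one and the same datum is the one place where the combinatorics of the diagrams in Definition \ref{def:lcc-axioms} has to be unwound by hand; once this is done, the remaining steps are formal consequences of the model-category axioms together with Propositions \ref{prop:object-generating-triv-cof-marked} and \ref{prop:lcc-model-cat}.
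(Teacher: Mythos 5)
Your proof is correct and its overall skeleton matches the paper's: the $j_K$ are handled by Proposition \ref{prop:object-generating-triv-cof-marked}, and the converse direction is obtained by rereading the lifting properties against $\mathrm{tm}_i, \mathrm{pb}_i, \mathrm{pi}_i$ as strict $1$-categorical existence statements with the codiagonals of \ref{itm:self-unions} supplying uniqueness --- exactly as in the paper. Where you genuinely diverge is in establishing that the maps of types \ref{itm:lcc-axioms} and \ref{itm:self-unions} are weak equivalences. The paper argues uniformly for both: for fibrant $\mathcal{C}$ the restriction map $\mathrm{Lcc}(f,\mathcal{C})$ is an equivalence of groupoids, hence a bijection on isomorphism classes, so $\gamma(f)$ is invertible in $\operatorname{Ho}\mathrm{Lcc}$ by Yoneda. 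You instead invoke, for type \ref{itm:lcc-axioms}, the fact that the localizing set $S$ consists of $S$-local equivalences by definition of left Bousfield localization, and, for type \ref{itm:self-unions}, a purely formal argument: the cobase change $B \rightarrow B \amalg_A B$ of the trivial cofibration $A \rightarrow B$ is a trivial cofibration, and two-out-of-three applied to $\langle\mathrm{id},\mathrm{id}\rangle \circ (B \rightarrow B\amalg_A B) = \mathrm{id}_B$ finishes it. Both routes are valid; yours is more elementary in that it avoids computing any mapping groupoids and needs only the closure properties of trivial cofibrations, while the paper's Yoneda argument has the advantage of exhibiting concretely \emph{why} lcc categories see these maps as equivalences, which is the same computation one reuses in the converse direction anyway. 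Your object-set bookkeeping (bijectivity on objects of $\mathrm{tm}_2,\mathrm{pb}_2,\mathrm{pi}_2$, hence of the codiagonal) is accurate and fills in a detail the paper leaves implicit in the phrase ``all three types of maps are injective on objects.''
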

\begin{proof}
  All three types of maps are injective on objects and hence cofibrations in $\mathrm{Cat}^{\mathrm{lcc}}$ and $\mathrm{Lcc}$.
  By Proposition \ref{prop:object-generating-triv-cof-marked}, the maps $j_K$ are trivial cofibrations of lcc-marked categories and hence also trivial cofibrations in $\mathrm{Lcc}$.

  By Proposition \ref{prop:lcc-model-cat}, the fibrant objects of $\mathrm{Lcc}$ are precisely the lcc categories. 
  If $\mathcal{C}$ is an lcc category and $f : X \rightarrow Y$ is a morphism of type \ref{itm:lcc-axioms} or \ref{itm:self-unions}, then 
  \begin{equation}
    \mathrm{Lcc}(f, \mathcal{C}) : \mathrm{Lcc}(Y, \mathcal{C}) \rightarrow \mathrm{Lcc}(X, \mathcal{C})
  \end{equation}
  is an equivalence of groupoids and hence induces a bijection of isomorphism classes.
  It follows by the Yoneda lemma that $\gamma(f)$ is an isomorphism in $\operatorname{Ho} \mathrm{Lcc}$, so $f$ is  a weak equivalence in $\mathrm{Lcc}$.

  On the other hand, let $\mathcal{C}$ be a fibrant lcc-marked category with the right lifting property against morphisms of type \ref{itm:lcc-axioms} and the morphisms of type \ref{itm:self-unions}.
  The right lifting property against $\mathrm{pb}_0, \mathrm{pb}_1$ and $\mathrm{pb}_2$ implies that $\mathrm{Pb}$-marked diagrams commute, that every cospan can be completed to a $\mathrm{Pb}$-marked square, and that every square over a cospan factors via every $\mathrm{Pb}$-marked square over the cospan.
  Uniqueness of factorizations follows from the right lifting property against the map of type \ref{itm:self-unions} corresponding to pullbacks.
  Thus $\mathcal{C}$ has pullbacks, and the argument for terminal objects and dependent products is similar.
\end{proof}

\begin{definition}
  A \emph{strict lcc category} is an algebraically fibrant object of $\mathrm{Lcc}$ with respect to the set $J$ consisting of the morphisms of types \ref{itm:marking-stability} -- \ref{itm:self-unions} of Proposition \ref{prop:object-generating-triv-cof-lcc}.
  The category of strict lcc categories is denoted by $\mathrm{sLcc}$.
\end{definition}

\begin{remark}
  The objects in the image of the forgetful functor $G : \mathrm{sLcc} \rightarrow \mathrm{Lcc}$ are the fibrant lcc sketches, i.e.\@ lcc categories.
  To endow an lcc category with the structure of a strict lcc category, we need to choose canonical lifts $\ell(j, -)$ against the morphisms $j \in J$.
  Because the lifts against all other morphisms are uniquely determined, only the choices for $\mathrm{tm}_1, \mathrm{pb}_1$ and $\mathrm{pi}_1$ are relevant for this.
  Thus a strict lcc category is an lcc category with assigned terminal object, pullback squares and dependent products (including the evaluation maps of dependent products).
  A strict lcc functor is then an lcc functor that preserves these canonical choices of universal objects not just up to isomorphism but up to equality.
\end{remark}

The slice category $\mathcal{C}_{/ \sigma}$ over an object $\sigma$ of an lcc category $\mathcal{C}$ is lcc again.
A morphism $s : \sigma \rightarrow \tau$ in $\mathcal{C}$ induces by pullback an lcc functor $s^* : \mathcal{C}_{/ \tau} \rightarrow \mathcal{C}_{/ \sigma}$, and there exist functors $\Pi_s, \Sigma_s : U(\mathcal{C}_{/ \tau}) \rightarrow U(\mathcal{C}_{/ \sigma})$ and adjunctions $\Sigma_s \dashv U(s^*) \dashv \Pi_s$.
These data depend on choices of pullback squares and dependent products, and hence they are preserved by lcc functors only up to isomorphism.

For strict lcc categories $\Gamma$, however, these functors can be constructed using canonical lcc structure, i.e.\@ using the lifts $\ell(j, -)$ for various $j \in J$, and this choice is preserved by strict lcc functors.
\begin{proposition}
  \label{prop:strict-slicing}
  Let $\Gamma$ be a strict lcc category, and let $\sigma \in \operatorname{Ob} \Gamma$.
  Then there is a strict lcc category $\Gamma_{/ \sigma}$ whose underlying category is the slice $U(G(\Gamma))_{/ \sigma}$.

  If $s : \sigma \rightarrow \tau$ is a morphism in $\Gamma$, then there is a canonical choice of pullback functor $s^* : G(\Gamma_{/ \tau}) \rightarrow G(\Gamma_{/ \sigma})$ which is lcc (but not necessarily strict) and canonical left and right adjoints
  \begin{equation}
    \Sigma_s \dashv U(s^*) \dashv \Pi_s.
  \end{equation}

  These data are natural in $\Gamma$.
  Thus if $f : \Gamma \rightarrow \Delta$ is strict lcc, then the evident functor $f_{/ \sigma} : U(G(\Gamma_{/ \sigma})) \rightarrow U(G(\Delta_{ / f(\sigma)}))$ is strict lcc, and the following squares in $\mathrm{Lcc}$ respectively $\mathrm{Cat}$ commute:
  \begin{mathpar}
    \begin{tikzcd}[sep=large]
      \Gamma_{/ \sigma} \arrow[r, "f_{/\sigma}"] & \Delta_{/ f(\sigma)} \\
      \Gamma_{/ \tau} \arrow[r, "f_{/ \tau}"] \arrow[u, "s^*"'] & \Delta_{/ f(\tau)} \arrow[u, "f(s)^*"']
    \end{tikzcd}
    \and
    \begin{tikzcd}[sep=large]
      \Gamma_{/ \sigma} \arrow[r, "f_{/\sigma}"] \arrow[d, "\Sigma_s"] & \Delta_{/ f(\sigma)} \arrow[d, "\Sigma_{f(s)}"] \\
      \Gamma_{/ \tau} \arrow[r, "f_{/ \tau}"] & \Delta_{/ f(\tau)}
    \end{tikzcd}
    \and
    \begin{tikzcd}[sep=large]
      \Gamma_{/ \sigma} \arrow[r, "f_{/\sigma}"] \arrow[d, "\Pi_s"] & \Delta_{/ f(\sigma)} \arrow[d, "\Pi_{f(s)}"] \\
      \Gamma_{/ \tau} \arrow[r, "f_{/ \tau}"] & \Delta_{/ f(\tau)}
    \end{tikzcd}
  \end{mathpar}
  (Here application of $G$ and $U$ has been omitted; the left square is valued in $\mathrm{Lcc}$, and the two right squares are valued in $\mathrm{Cat}$.)
  $f_{/ \sigma}$ and $f_{/ \tau}$ commute with taking transposes along the involved adjunctions.
\end{proposition}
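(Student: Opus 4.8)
\section*{Proof proposal}

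The plan is to transport every piece of structure from the canonical lcc structure of $\Gamma$, exploiting that a strict lcc category carries \emph{canonical} (not merely up-to-isomorphism) terminal object, pullbacks, and dependent products together with their evaluation maps, and that strict lcc functors preserve exactly this data on the nose. The conceptual input --- that a slice of an lcc category is again lcc --- is standard; the real content is that each choice can be made canonically and is therefore preserved strictly rather than only up to isomorphism.

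First I would define the strict lcc structure on $\Gamma_{/ \sigma}$. The underlying category is $U(G(\Gamma))_{/ \sigma}$, and by Proposition \ref{prop:lcc-model-cat}\ref{itm:lcc-fibrant-objects} its fibrant marking is forced: the $\mathrm{Tm}$-, $\mathrm{Pb}$- and $\mathrm{Pi}$-marked diagrams must be exactly the terminal objects, pullback squares, and dependent-product evaluation diagrams. Because the lifts against all maps of $J$ other than $\mathrm{tm}_1$, $\mathrm{pb}_1$ and $\mathrm{pi}_1$ are uniquely determined, to promote this to an algebraically fibrant object it suffices to supply canonical choices of terminal object, pullbacks and dependent products. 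I would take the canonical terminal object to be $\mathrm{id}_\sigma$; the canonical pullback of a cospan in the slice to be the canonical pullback in $\Gamma$ of its underlying cospan (postcomposed with the structure map to $\sigma$); and the canonical dependent product along a morphism $h$ of the slice to be the canonical dependent product of $\Gamma$, via the \emph{strict} identification $(\Gamma_{/ \sigma})_{/ a} = \Gamma_{/ \operatorname{dom} a}$, which holds on the nose because an object of $(\Gamma_{/ \sigma})_{/ a}$ is literally a morphism into $\operatorname{dom} a$. Checking that this identity commutes with pullback functors then gives that the slice dependent products coincide with those of $\Gamma$.

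Next I would exhibit the adjoint triple. Here I observe that for $s : \sigma \rightarrow \tau$ the three functors are directly the lcc structure of $\Gamma$ for the morphism $s$: $\Sigma_s$ is postcomposition with $s$ (requiring no choice), $s^*$ is pullback along $s$ using the canonical pullbacks of $\Gamma$, and $\Pi_s$ is the canonical dependent product of $\Gamma$ with base $s$. The adjunctions $\Sigma_s \dashv U(s^*) \dashv \Pi_s$ are then the defining adjunctions of the lcc structure of $\Gamma$, with units and counits assembled from the canonical projections and the canonical evaluation maps $\varepsilon$. That $s^*$ is an lcc functor is the standard fact that pullback functors preserve lcc structure; it is not strict because the canonical pullback of, say, $\mathrm{id}_\tau$ along $s$ need not be literally $\mathrm{id}_\sigma$.

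Finally, naturality reduces to strictness of $f$. Since $f$ preserves the canonical terminal object, the canonical pullbacks and the canonical dependent products of $\Gamma$ up to equality, the functor $f_{/ \sigma}$, which acts by applying $f$ to structure maps, preserves the corresponding canonical data on $\Gamma_{/ \sigma}$ and is hence strict lcc. The three squares then commute by direct computation: for the left square one uses that $f$ carries the canonical pullback defining $s^*$ to the canonical pullback defining $f(s)^*$; for the $\Sigma$-square one uses $f(s \circ b) = f(s) \circ f(b)$; and for the $\Pi$-square one uses strict preservation of the canonical dependent products. Because $f$ preserves the canonical data out of which the units, counits and evaluation maps are built, it preserves the units and counits of the adjunctions, whence $f_{/ \sigma}$ and $f_{/ \tau}$ commute with transposition. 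I expect the main obstacle to be the careful setup of the strict slice-of-slice identity and the attendant transfer of the canonical dependent product structure, since this is the one place where a genuine strict equality (rather than a mere isomorphism) must be verified rather than inherited formally; everything else follows from the design of $\mathrm{sLcc}$.
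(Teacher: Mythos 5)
Your proposal is correct and follows essentially the same route as the paper's (much terser) proof: canonical terminal object $\mathrm{id}_\sigma$, canonical pullbacks and dependent products in the slice computed from the canonical ones of $\Gamma$, the adjoint triple assembled from composition, canonical pullback and canonical dependent product with units/counits given by canonical projections and evaluation maps, and naturality following because everything is built from canonical structure that strict lcc functors preserve on the nose. Your additional care about the strict slice-of-slice identity is exactly the content hidden in the paper's phrase ``and similarly for dependent products.''
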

\begin{proof}
  We take as canonical terminal object of $\Gamma_{/ \sigma}$ the identity morphism $\mathrm{id}_\sigma$ on $\sigma$.
  Canonical pullbacks in $\Gamma_{/ \sigma}$ are computed as canonical pullbacks of the underlying diagram in $\Gamma$, and similarly for dependent products. 
  
  The canonical pullback and dependent product functors $\sigma^*, \Pi_s$ are defined using canonical pullbacks and dependent products, and dependent sum functors $\Sigma_s$ are computed by composition with $s$.
  Units and counits of the adjunctions are given by the evaluation maps of canonical dependent products and the projections of canonical pullbacks.

  Because these data are defined in terms of canonical lcc structure on $\Gamma$, they are preserved by strict lcc functors.
\end{proof}

The context morphisms in our categories with families (cwfs) \citep{internal-type-theory} will usually be defined as functors of categories in the opposite directions.
Cwfs are categories equipped with contravariant functors to $\mathrm{Fam}$, the category of families of sets.
To avoid having to dualize twice, we thus introduce the following notion.
\begin{definition}
  A \emph{covariant cwf} is a category $\mathcal{C}$ equipped with a (covariant) functor $(\mathrm{Ty}, \mathrm{Tm}) : \mathcal{C} \rightarrow \mathrm{Fam}$.
\end{definition}
The intuition for a context morphism $f : \Gamma \rightarrow \Delta$ in a cwf is an assignment of terms in $\Gamma$ to the variables occurring in $\Delta$.
Dually, a morphism $f : \Delta \rightarrow \Gamma$ in a covariant cwf should thus be thought of as a mapping of the variables in $\Delta$ to terms in context $\Gamma$, or more conceptually as an interpretation of the mathematics internal to $\Delta$ into the mathematics internal to $\Gamma$.

Apart from our use of covariant cwfs, we adhere to standard terminology with the obvious dualization.
For example, an empty context in a covariant cwf is an initial (instead of terminal) object in the underlying category.

To distinguish type and term formers in (covariant) cwfs from the corresponding categorical structure, the type theoretic notions are typeset in bold where confusion is possible.
Thus $\mathbf{\Pi}_\sigma \, \tau$ denotes a dependent product type whereas $\Pi_\sigma(\tau)$ denotes application of a dependent product functor $\Pi_\sigma : \mathcal{C}_{/ \sigma} \rightarrow \mathcal{C}$ to an object $\tau \in \operatorname{Ob} \mathcal{C}_{/ \sigma}$.

\begin{definition}
  The covariant cwf structure on $\mathrm{sLcc}$ is given by $\mathrm{Ty}(\Gamma) = \operatorname{Ob} \Gamma$ and $\mathrm{Tm}(\Gamma, \sigma) = \mathrm{Hom}_\Gamma(1, \sigma)$, where $1$ denotes the canonical terminal object of $\Gamma$.
\end{definition}

\begin{proposition}
  \label{prop:slcc-model}
  The covariant cwf $\mathrm{sLcc}$ has an empty context and context extensions, and it supports finite product and extensional equality types.
\end{proposition}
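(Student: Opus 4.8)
The plan is to verify each piece of covariant cwf structure in turn, in each case constructing the relevant data from the \emph{canonical} lcc structure on strict lcc categories (guaranteed by the chosen lifts $\ell(j,-)$ for $j \in J$) so that strict lcc functors preserve it on the nose, and then checking the required universal properties against the dualized cwf axioms. Recall that in a covariant cwf, context morphisms $\Delta \to \Gamma$ are strict lcc functors, the empty context is an \emph{initial} object, and context extension of $\Gamma$ by $\sigma \in \operatorname{Ob}\Gamma = \mathrm{Ty}(\Gamma)$ is a context morphism $p : \Gamma \to \Gamma.\sigma$ together with a variable term, universal in the dualized sense.

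First I would treat the empty context: the initial object of $\mathrm{sLcc}$ should be the strict lcc structure on the terminal category $\mathbf{1}$ (equivalently, the free lcc category on nothing), whose canonical terminal object, pullbacks, and dependent products are forced, so that there is a unique strict lcc functor $\mathbf{1} \to \Gamma$ into any strict lcc category, given by sending the unique object to the canonical terminal object $1$ of $\Gamma$ and using the canonical lifts to fix it uniquely. Next, for context extension I would \emph{not} use the slice $\Gamma_{/\sigma}$ of Proposition \ref{prop:strict-slicing} directly, since the introduction has already flagged that slicing is only biuniversal; instead the context extension $\Gamma.\sigma$ should be the strict lcc category obtained by freely adjoining a global section (a term) of $\sigma$, i.e. freely adjoining a morphism $v : 1 \to \sigma$ and then taking an algebraically-fibrant (strict) replacement. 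The projection $p : \Gamma \to \Gamma.\sigma$ is the canonical inclusion, the variable $v = \mathrm{Tm}(\Gamma.\sigma, p(\sigma))$ is the adjoined section, and the required universal property — that for each $f : \Gamma \to \Delta$ and each term $s : 1 \to f(\sigma)$ in $\Delta$ there is a \emph{unique} strict lcc functor $\langle f, s\rangle : \Gamma.\sigma \to \Delta$ with $\langle f,s\rangle \circ p = f$ and $\langle f,s\rangle(v) = s$ — is exactly the freeness of the adjunction, with strictness pinning down $\langle f,s\rangle$ uniquely rather than merely up to isomorphism.

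For the type formers I would check the dualized versions of the standard cwf rules. Finite product types ($\mathbf{1}$ and $\mathbf{\Sigma}$ restricted to the non-dependent, or rather the cwf-theoretic product) are obtained from the canonical terminal object and the canonical products/pullbacks in $\Gamma$, using Proposition \ref{prop:strict-slicing} to get the substitution-stable formation, introduction, elimination and their $\beta\eta$-equations; because all of this is phrased through the canonical lifts, naturality in $\Gamma$ (hence substitution stability) is automatic from the last clause of Proposition \ref{prop:strict-slicing}. Extensional equality types $\mathbf{Eq}_\sigma(s,t)$ between terms $s,t : 1 \to \sigma$ I would interpret via the canonical equalizer, which in an lcc category is computed as a canonical pullback of $s,t$ against the diagonal; the reflexivity term and the reflection/uniqueness-of-identity-proofs rules of extensional equality then follow from the universal property of the equalizer, and strictness again gives substitution stability on the nose.

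The main obstacle I expect is the context-extension clause, precisely the point the introduction dwells on: I must exhibit $\Gamma.\sigma$ as a \emph{strict} lcc category for which $\langle f,s\rangle$ is strictly (not just pseudo-) functorial and strictly unique, and reconcile the ``free adjunction of a term'' description with the slice description $\Gamma_{/\sigma}$, which are only biequivalent. The honest difficulty is that freely adjoining a section and then taking a strict (algebraically fibrant) replacement need not land back among objects admitting canonical strict comparison functors to arbitrary $\Delta$ — this is exactly why the paper later passes to $\operatorname{Coa}\mathrm{sLcc}$. So at this stage I would prove context extension exists with the correct universal property within $\mathrm{sLcc}$ (using freeness plus the rigidity of canonical lifts to force uniqueness), while noting that the compatibility of this free construction with slicing, needed for the full comprehension structure, is deferred; the finite-product and extensional-equality clauses are comparatively routine once Proposition \ref{prop:strict-slicing} supplies the canonical, functor-preserved adjoints.
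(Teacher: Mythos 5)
Your overall route is the paper's: the empty context comes from cocompleteness of $\mathrm{sLcc}$ (equivalently $F(\emptyset)$), the context extension $\Gamma.\sigma$ is the pushout of $F(\{t,\sigma\}) \to F(\{v : t \to \sigma\})$ along the map classifying $(1,\sigma)$ — i.e.\@ the free adjunction of a section of $\sigma$ in $\mathrm{sLcc}$, with the universal property given by the 1-categorical pushout rather than by slicing — and the unit, product and equality types are built from the canonical terminal object and canonical pullbacks, so that strict lcc functors preserve them on the nose. Your closing observation that the comparison with $\Gamma_{/\sigma}$ is not needed here and is deferred to $\operatorname{Coa}\mathrm{sLcc}$ matches Remark \ref{rem:slcc-slice-problem} exactly.

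One concrete error: the initial object of $\mathrm{sLcc}$ is \emph{not} the terminal category $\mathbf{1}$ with its (unique) strict lcc structure, and it is not ``equivalently'' $F(\emptyset)$ — these are weakly equivalent but not isomorphic, and only the latter is initial. A strict lcc functor $h : \mathbf{1} \to \Gamma$ would have to send the unique object simultaneously to the canonical terminal object of $\Gamma$, to the canonical pullback of $1 \to 1 \leftarrow 1$, to the canonical dependent product $\Pi_{\mathrm{id}}(\mathrm{id})$, and so on; these are in general pairwise distinct (merely isomorphic) objects of $\Gamma$, so no such $h$ exists and $\mathbf{1}$ is not initial. The underlying category of the genuine initial object $F(\emptyset)$ is only \emph{equivalent} to $\mathbf{1}$: it contains all the formally adjoined canonical universal objects. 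The fix is trivial (just cite cocompleteness of $\operatorname{Alg}\mathcal{M}$ from Theorem \ref{th:algebraically-fibrant-model-category}, as the paper does), but the reasoning ``sending the unique object to the canonical terminal object and using the canonical lifts to fix it uniquely'' does not produce a strict lcc functor. A minor cosmetic difference elsewhere: the paper forms $\mathrm{Eq}\,s\,t$ directly as the canonical pullback of the cospan $1 \xrightarrow{s} \sigma \xleftarrow{t} 1$ rather than pulling back against the diagonal, but both are canonical and hence substitution-stable.
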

\begin{proof}
  It follows from Theorem \ref{th:algebraically-fibrant-model-category} below that $\mathrm{sLcc}$ is cocomplete and that $G : \mathrm{sLcc} \rightarrow \mathrm{Lcc}$ has a left adjoint $F$.
  In particular, there exists an initial strict lcc category, i.e.\@ an empty context in $\mathrm{sLcc}$.
  
  Let $\Gamma \vdash \sigma$.
  The context extension $\Gamma.\sigma$ is constructed as pushout
  \begin{equation}
    \label{eq:slcc-context-extension}
    \begin{tikzcd}
      F(\{ t, \sigma \}) \arrow[d] \arrow[r] & F(\{ v : t \rightarrow \sigma \}) \arrow[d] \\
      \Gamma \arrow[r, "p"] & \Gamma.\sigma
    \end{tikzcd}
  \end{equation}
  where $\{ t, \sigma \}$ denotes a minimally marked lcc sketch with two objects and $\{ v : t \rightarrow \sigma \}$ is the minimally marked free-standing arrow.
  The vertical morphism on the left is induced by mapping $t$ to $1$ (the canonical terminal object of $\Gamma$) and $\sigma$ to $\sigma$, and the top morphism is the evident inclusion.
  The variable $\Gamma.\sigma \vdash v : p(\sigma)$ is given by the image of $v$ in $\Gamma.\sigma$.

  Unit types $\Gamma \vdash 1$ are given by the canonical terminal objects of strict lcc categories $\Gamma$.
  Binary product types $\Gamma \vdash \sigma \times \tau$ are given by canonical pullbacks $\sigma \times_1 \tau$ over the canonical terminal object $1$ in $\Gamma$.
  Finally, equality types $\Gamma \vdash \mathrm{Eq} s \, t$ are constructed as canonical pullbacks
  \begin{equation}
    \begin{tikzcd}
      \operatorname{\mathrm{Eq}} s \, t \arrow[r] \arrow[d] & 1 \arrow[d, "t"] \\
      1 \arrow[r, "s"] & \sigma,
    \end{tikzcd}
  \end{equation}
  in $\Gamma$, i.e.\@ as equalizers of $s$ and $t$.

  Because these type constructors (and evident term formers) are defined from canonical lcc structure, they are stable under substitution.
\end{proof}

\begin{remark}
  \label{rem:slcc-slice-problem}
  Unfortunately, $\mathrm{sLcc}$ does not support dependent product or dependent sum types in a similarly obvious way.
  The introduction rule for dependent types is
  \begin{equation}
    \inferrule
    {\Gamma \vdash \sigma \\ \Gamma.\sigma \vdash \tau}
    {\Gamma \vdash \mathbf{\Pi}_\sigma \tau}.
  \end{equation}
  To interpret it, we would like to apply the dependent product functor $\Pi_\sigma : U(G(\Gamma))_{/ \sigma} \rightarrow U(G(\Gamma))$ to $\tau$.
  
  We thus need a functor $U(G(\Gamma.\sigma)) \rightarrow U(G(\Gamma_{/ \sigma}))$ to obtain an object of the slice category, and the construction of such a functor appears to be not generally possible.
  Note that the most natural strategy for constructing this functor using the universal property of $\Gamma.\sigma$ does not work:
  For this we would note that the pullback functor $\sigma^* : G(\Gamma) \rightarrow G(\Gamma_{/ \sigma})$ is lcc, and that the diagonal $d : \sigma \rightarrow \sigma \times \sigma$ is a morphism $1 \rightarrow \sigma^*(\sigma))$ in $\Gamma_{/ \sigma}$, and then try to obtain $\langle \sigma^*, d \rangle : \Gamma.\sigma \rightarrow \Gamma_{/ \sigma}$.
  The flaw in this argument is that $\sigma^*$, while lcc, is not strict, and the universal property of $\Gamma.\sigma$ only applies to strict lcc functors.
  A solution to this problem is presented in Section \ref{sec:algebraically-cofibrant}.
\end{remark}

We conclude the section with a justification for why we have not gone astray so far:
The initial claim was that our interpretation of type theory would be valued in the category of lcc categories, but $\mathrm{sLcc}$ is neither 1-categorically nor bicategorically equivalent to the category $\mathrm{Lcc}_f$ of fibrant lcc sketches.
Indeed, not every non-strict lcc functor of strict lcc categories is isomorphic to a strict lcc functor.
Nevertheless, $\mathrm{sLcc}$ has model category structure that presents the same higher category of lcc categories by the following theorem:
\begin{theorem}[\citet{algebraic-models} Proposition 2.4, \citet{equipping-weak-equivalences} Theorem 19]
  \label{th:algebraically-fibrant-model-category}
  Let $\mathcal{M}$ be a combinatorial model category, and let $J$ be a set of trivial cofibrations such that objects with the right lifting property against $J$ are fibrant.
  Then $G : \operatorname{Alg} \mathcal{M} \rightarrow \mathcal{M}$ is monadic with left adjoint $F$, and $\operatorname{Alg} \mathcal{M}$ is a locally presentable category.
  The model structure of $\mathcal{M}$ can be transferred along the adjunction $F \dashv G$ to $\operatorname{Alg} \mathcal{M}$, endowing $\operatorname{Alg} \mathcal{M}$ with the structure of a combinatorial model category.
  $F \dashv G$ is a Quillen equivalence, and the unit $X \rightarrow G(F(X))$ is a trivial cofibration for all $X \in \mathcal{M}$.
\end{theorem}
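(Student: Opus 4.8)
The plan is to separate the statement into its purely categorical content — monadicity, local presentability, and the existence of $F$ — and its homotopical content — the transferred model structure, the unit, and the Quillen equivalence — proving the former directly and then bootstrapping the latter from a single lemma about the unit. For the categorical part I would realize $\operatorname{Alg}\mathcal{M}$ as the category of algebras for a monad $T$ on $\mathcal{M}$. Applying the algebraic small object argument to the \emph{set} $J$ produces an accessible pointed endofunctor whose algebra structure is precisely a coherent choice of lifts against the maps of $J$; since $J$ is small and the (co)domains of its members are presentable in the locally presentable $\mathcal{M}$, the induced monad $T$ is accessible. Monadicity of $G$ is then built into the construction, $F$ is the free-algebra functor, and local presentability of $\operatorname{Alg}\mathcal{M} = \mathcal{M}^T$ follows from the general fact that algebras for an accessible monad on a locally presentable category form a locally presentable category. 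In particular $\operatorname{Alg}\mathcal{M}$ is complete and cocomplete, with $G$ creating limits.

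Next I would right-induce the model structure along $G$: a morphism of $\operatorname{Alg}\mathcal{M}$ is a weak equivalence or fibration exactly when $G$ carries it to one in $\mathcal{M}$, cofibrations are defined by the left lifting property, and the candidate generating (trivial) cofibrations are $F(I)$ and $F(J')$ for generating sets $I, J'$ of $\mathcal{M}$. Smallness of the domains is automatic from local presentability, so by the standard lifting theorem the only genuine hypothesis to verify is the acyclicity condition: every relative $F(J')$-cell complex is sent by $G$ to a weak equivalence in $\mathcal{M}$.

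The lemma that drives the rest is that the unit $\eta_X \colon X \to GF(X)$ is a trivial cofibration. This is because $GF(X) = T(X)$ is built by the small object argument as a transfinite composite of pushouts of coproducts of maps in $J$; as $J$ consists of trivial cofibrations and the trivial cofibrations of $\mathcal{M}$ form the left class of a weak factorization system — hence are closed under coproduct, pushout, and transfinite composition — the map $\eta_X$ is again a trivial cofibration, which is part (5). From this the Quillen equivalence is essentially formal: every object of $\operatorname{Alg}\mathcal{M}$ is fibrant, since an algebra has the right lifting property against $J$ and therefore $G$ of it is fibrant in $\mathcal{M}$ by hypothesis. Consequently, for cofibrant $X$ the fibrant replacement of $F(X)$ may be taken to be $F(X)$ itself, so the derived unit is just $\eta_X$, a weak equivalence, and $G$ reflects weak equivalences by definition; these are exactly the two conditions characterizing a Quillen equivalence.

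I expect the acyclicity condition to be the main obstacle, precisely because $G$ is a right adjoint and so does not preserve the pushouts that assemble an $F(J')$-cell complex. The technique I would use is to compute such pushouts in $\mathcal{M}^T$ through their reflexive-coequalizer (bar) presentation, exploiting that $G$ preserves filtered colimits and reflexive coequalizers of $G$-split pairs, and then to compare the resulting map with the corresponding trivial cofibration of $\mathcal{M}$ by inserting the unit maps $\eta$ and using that each algebra underlying object is a retract of its free replacement via $\alpha \circ \eta = \mathrm{id}$ together with the two-out-of-three property. Carrying out this comparison carefully — in particular checking that the attaching maps are compatible with the monad multiplication — is the delicate step; it is the content of \citet{equipping-weak-equivalences} Theorem 19, which completes the transfer that the formal argument leaves open, while \citet{algebraic-models} Proposition 2.4 supplies the monadicity and local presentability established in the first paragraph.
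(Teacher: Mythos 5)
The paper does not actually prove this theorem: it imports it from the cited references, adding only a remark that the hypothesis on $J$ used here (objects with the right lifting property against $J$ are fibrant) is slightly weaker than ``generating trivial cofibrations'' as in Bourke's Theorem 19, and that the proof there goes through unchanged. Your outline is a correct reconstruction of the standard argument of those references --- monadicity and local presentability via the accessible free-algebra monad, the unit $X \to GF(X)$ as a transfinite composite of pushouts of coproducts of maps of $J$ (hence a trivial cofibration), fibrancy of every algebra, and the formal deduction of the Quillen equivalence from the derived unit plus reflection of weak equivalences --- and you rightly isolate the acyclicity condition for the transfer as the one substantive step, deferring it to exactly the sources the paper itself relies on.
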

Theorem \ref{th:algebraically-fibrant-model-category} appears in \citet{algebraic-models} with the additional assumption that all cofibrations in $\mathcal{M}$ are monomorphisms.
This assumption is lifted in \citet{equipping-weak-equivalences}, but there $J$ is a set of generating trivial cofibrations, which is a slightly stronger condition than the one stated in the theorem.
However, the proof in \citet{equipping-weak-equivalences} works without change in the more general setting.

That the model structure of $\operatorname{Alg} \mathcal{M}$ is obtained by transfer from that of $\mathcal{M}$ means that $G$ reflects fibrations and weak equivalences.

\begin{lemma}
  \label{lem:algebraically-fibrant-lproper-simplicial}
  Let $\mathcal{M}$ and $J$ be as in Theorem \ref{th:algebraically-fibrant-model-category}, and suppose furthermore that $\mathcal{M}$ is a model $\mathrm{Gpd}$-category.
  Then $\operatorname{Alg} \mathcal{M}$ has the structure of a model $\mathrm{Gpd}$-category, and the adjunction $F \dashv G$ lifts to a Quillen $\mathrm{Gpd}$-adjunction.
\end{lemma}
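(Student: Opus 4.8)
The plan is to follow the blueprint of Lemma~\ref{lem:simplicial-marked-objects}\ref{itm:marked-simplicial}, with the one essential difference that the forgetful functor $G$ will only preserve powers (cotensors), not copowers (tensors), since $G$ is now a right adjoint rather than a bireflection. By Theorem~\ref{th:algebraically-fibrant-model-category} the category $\operatorname{Alg} \mathcal{M}$ is locally presentable and $G : \operatorname{Alg} \mathcal{M} \rightarrow \mathcal{M}$ is monadic; write $T = GF$ for the induced monad. First I would define the mapping groupoid $\operatorname{Alg} \mathcal{M}(X, Y)$ to be the full subgroupoid of $\mathcal{M}(G(X), G(Y))$ spanned by the algebra morphisms. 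This is closed under composition and contains the identities, so $\operatorname{Alg} \mathcal{M}$ becomes a $\mathrm{Gpd}$-category and $G$ a $\mathrm{Gpd}$-functor which is fully faithful on mapping groupoids.

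The key step is to produce powers. I would equip the underlying object $G(X)^{\mathcal{G}}$ of the power in $\mathcal{M}$ with a $T$-algebra structure and show that the result is the power $X^{\mathcal{G}}$ in $\operatorname{Alg} \mathcal{M}$, so that $G$ creates powers and $G(X^{\mathcal{G}}) = G(X)^{\mathcal{G}}$. For this one needs $T$ to be a $\mathrm{Gpd}$-enriched monad: the structure map is then the composite $T(G(X)^{\mathcal{G}}) \rightarrow (T G(X))^{\mathcal{G}} \rightarrow G(X)^{\mathcal{G}}$ of the canonical comparison with the power and the power of the structure map of $X$, and the universal property $\operatorname{Alg} \mathcal{M}(Y, X^{\mathcal{G}}) \cong [\mathcal{G}, \operatorname{Alg} \mathcal{M}(Y, X)]$ follows by unwinding, using that a map $G(Y) \rightarrow G(X)^{\mathcal{G}}$ preserves the algebra structure if and only if each of its components $G(Y) \rightarrow G(X)$ does. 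Copowers then come for free: since $\operatorname{Alg} \mathcal{M}$ is locally presentable and cotensored, the functor $Y \mapsto [\mathcal{G}, \operatorname{Alg} \mathcal{M}(X, Y)]$ preserves limits and is accessible, hence representable, and its representing object is the copower $\mathcal{G} \otimes X$. Thus $\operatorname{Alg} \mathcal{M}$ is a tensored and cotensored $\mathrm{Gpd}$-category.

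The main obstacle is precisely the enrichment of $T$, i.e.\@ the fact that the choices of lifts can be made functorial with respect to the isomorphisms of the mapping groupoids (equivalently, that $T$ carries $2$-cells of $\mathcal{M}$ to $2$-cells). I would obtain this from the construction of $F$, and hence of $T$, by the algebraic small object argument of \citet{algebraic-models, equipping-weak-equivalences}: $F$ is built as a transfinite composite of pushouts of coproducts of the maps in $J$, all of which are $\mathrm{Gpd}$-enriched colimits, and copowering with a groupoid is a left adjoint and so preserves them, which exhibits $T$ as a $\mathrm{Gpd}$-functor. Granting this, the ordinary adjunction $F \dashv G$ lifts to a $\mathrm{Gpd}$-adjunction by \citet[Theorem 4.85]{basic-concepts-of-enriched-category-theory}, since $G$ is a $\mathrm{Gpd}$-functor preserving powers between tensored $\mathrm{Gpd}$-categories; in particular $F$ preserves copowers, so that $\mathcal{G} \otimes F(M) \cong F(\mathcal{G} \otimes M)$.

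It remains to verify that the tensoring $\mathrm{Gpd} \times \operatorname{Alg} \mathcal{M} \rightarrow \operatorname{Alg} \mathcal{M}$ is a Quillen bifunctor. Since both factors are cofibrantly generated, it suffices to check the pushout-product condition on generating (trivial) cofibrations \citep{hirschhorn}. By the transfer in Theorem~\ref{th:algebraically-fibrant-model-category}, the generating (trivial) cofibrations of $\operatorname{Alg} \mathcal{M}$ are the images $F(i)$ of those of $\mathcal{M}$. For a (trivial) cofibration $f$ of groupoids and such a map $F(i)$, the preservation of copowers and pushouts by $F$ yields $f \mathbin{\square} F(i) \cong F(f \mathbin{\square} i)$, where $f \mathbin{\square} i$ is the pushout-product formed in $\mathcal{M}$. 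As $\mathcal{M}$ is a model $\mathrm{Gpd}$-category, $f \mathbin{\square} i$ is a cofibration in $\mathcal{M}$, trivial as soon as $f$ or $i$ is; applying the left Quillen functor $F$ shows that $f \mathbin{\square} F(i)$ is a (trivial) cofibration in $\operatorname{Alg} \mathcal{M}$, which completes the verification.
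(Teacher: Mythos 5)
Your overall skeleton (full subgroupoids as mapping groupoids, construct (co)powers so that $G$ preserves powers, invoke \citet[Theorem 4.85]{basic-concepts-of-enriched-category-theory}, then check the pushout-product condition on generating maps) matches the paper's, and your verification of the Quillen bifunctor condition via tensors and generating cofibrations is a legitimate dual to the paper's check via powers and fibrations. But there is a genuine gap at the central step, the construction of powers. You reduce everything to the claim that $T = GF$ is a $\mathrm{Gpd}$-enriched monad, justified by saying that $F$ is a transfinite composite of pushouts of coproducts of maps in $J$. This does not work: the one-step functor $X \mapsto \coprod_{j : A \to B \in J} \coprod_{a : A \to X} B$ is indexed by the \emph{set} of maps $a$, and a $2$-cell $\phi : f_0 \cong f_1 : X \to Y$ induces no $2$-cell between the two induced maps, since the summand indexed by $a$ would have to be carried by an isomorphism from the coprojection at index $f_0 a$ to the coprojection at index $f_1 a$, and distinct coprojections into a coproduct are in general not isomorphic in the mapping groupoid. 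So the construction does not exhibit $T$ as a $\mathrm{Gpd}$-functor. A second, related problem is that even if $T$ were enriched, the enriched Eilenberg--Moore category would have as $2$-cells only those natural isomorphisms compatible with the algebra structure maps, whereas you (correctly, following the statement to be proved) take the \emph{full} subgroupoid of $\mathcal{M}(G(X), G(Y))$; these two enrichments do not coincide for an arbitrary enriched monad, so the universal property you claim for $G(X)^{\mathcal{G}}$ would not match your chosen hom-groupoids without further argument.

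What your argument is missing is precisely the homotopical input: your construction never uses that the maps in $J$ are trivial cofibrations and that objects of $\operatorname{Alg} \mathcal{M}$ have fibrant underlying objects, yet both are essential. (For instance, with $\mathcal{M} = \mathrm{Cat}$ and $J = \{\emptyset \to 1\}$, a non-trivial cofibration, the category of ``algebras'' is pointed categories with the full subgroupoid enrichment, and $G$ does not create powers by $\mathcal{I}$.) The paper constructs the power $X^{\mathcal{I}}$ by hand: for $j : A \to B$ in $J$ and $a : A \to G(X)^{\mathcal{I}}$, i.e.\@ an isomorphism $\bar a : \bar a_0 \cong \bar a_1$ of maps $A \to G(X)$, one uses that $\mathcal{M}(j, G(X))$ is a trivial fibration of groupoids (because $j$ is a trivial cofibration and $G(X)$ is fibrant) to lift $\bar a$ uniquely to an isomorphism $\ell(j, \bar a_0) \cong \ell(j, \bar a_1)$, whose transpose is the canonical lift $\ell(j, a) : B \to G(X)^{\mathcal{I}}$; uniqueness of this lift is also what identifies algebra maps into $X^{\mathcal{I}}$ with isomorphisms of algebra maps into $X$, i.e.\@ with the full subgroupoid enrichment. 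To repair your proof you would need to replace the appeal to an enriched small object argument by this lifting argument (or something equivalent that actually uses the hypotheses on $J$).
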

\begin{proof}
  Let $X$ and $Y$ be algebraically fibrant objects.
  We define the mapping groupoid $(\operatorname{Alg} \mathcal{M})(X, Y)$ to be the full subgroupoid of $\mathcal{M}(G(X), G(Y))$ whose objects are the maps of algebraically fibrant objects $X \rightarrow Y$.

  Because $\mathrm{Gpd}$ is generated under colimits by the free-standing isomorphism $\mathcal{I}$, it will follow from the existence of powers $X^\mathcal{I}$ that $\operatorname{Alg} \mathcal{M}$ is complete as a $\mathrm{Gpd}$-category.
  As we will later show that $G$ is a right adjoint, the powers in $\operatorname{Alg} \mathcal{M}$ must be constructed such that they commute with $G$, i.e.\@ $G(X^\mathcal{I}) = G(X)^\mathcal{I}$.

  Let $j : A \rightarrow B$ be in $J$ and let $a : A \rightarrow G(X)^\mathcal{I}$.
  The canonical lift $\ell(j, a) : B \rightarrow G(X)^\mathcal{I}$ is constructed as follows:
  $a$ corresponds to a map $\bar a : \mathcal{I} \rightarrow \mathcal{M}(A, G(X))$, i.e.\@ an isomorpism of maps $A \rightarrow G(X)$.
  Its source and target are morphisms $\bar a_0, \bar a_1 : A \rightarrow G(X)$, for which we obtain canonical lifts $\ell(j, \bar a_i) : B \rightarrow G(X)$ using the canonical lifts of $X$.
  Because $G(X)$ is fibrant and $j$ is a trivial cofibration, the map $\mathcal{M}(j, G(X)) : \mathcal{M}(B, G(X)) \rightarrow \mathcal{M}(A, G(X))$ is a trivial fibration and in particular an equivalence.
  It follows that $\bar a$ can be lifted uniquely to an isomorphism of $\ell(j, \bar a_0)$ with $\ell(j, \bar a_1)$, and we take $\ell(j, a) : B \rightarrow G(X)^\mathcal{I}$ as this isomorphism's transpose.

  From uniqueness of the lift defining $\ell(j, a)$, it follows that a map $G(Y) \rightarrow G(X)^\mathcal{I}$ preserves canonical lifts if and only if the two maps
  \begin{equation}
    \begin{tikzcd}[column sep=large]
      G(Y) \arrow[r] & G(X)^\mathcal{I} \arrow[r, "G(X)^{\{i\}}"] & G(X)
    \end{tikzcd}
  \end{equation}
  given by evaluation at the endpoints $i = 0, 1$ of the isomorphism $\mathcal{I}$ preserve canonical lifts.
  Thus the canonical isomorphism
  \begin{equation}
    \mathrm{Gpd}(\mathcal{I}, \mathcal{M}(G(Y), G(X))) \cong \mathcal{M}(G(Y), G(X)^\mathcal{I})
  \end{equation}
  restricts to an isomorphism
  \begin{equation}
    \mathrm{Gpd}(\mathcal{I}, (\operatorname{Alg} \mathcal{M})(Y, X)) \cong (\operatorname{Alg} \mathcal{M})(Y, X^\mathcal{I}).
  \end{equation}

  It follows by \citet[theorem 4.85]{basic-concepts-of-enriched-category-theory} and the preservation of powers by $G$ that the 1-categorical adjunction $F \dashv G$ is groupoid enriched.
  It is proved in \citet{algebraic-models} that $G$, when considered as a functor of ordinary categories, is monadic using Beck's monadicity theorem.
  The only additional assumption for the enriched version of Beck's theorem \citep[theorem II.2.1]{enriched-kan-extensions} we have to check is that the coequalizer of a $G$-split pair of morphisms as constructed in \citet{algebraic-models} is a colimit also in the enriched sense.
  This follows immediately from the fact that $G$ is locally full and faithful.
  $G$ is $\mathrm{Gpd}$-monadic and accessible, so $\operatorname{Alg} \mathcal{M}$ is $\mathrm{Gpd}$-cocomplete by \citet[theorem 3.8]{two-dimensional-monad-theory}.

  It remains to show that $\operatorname{Alg} \mathcal{M}$ is groupoid enriched also in the model categorical sense.
  For this it suffices to note that $G$ preserves (weighted) limits and that $G$ preserves and reflects fibrations and weak equivalences, so that the map
  \begin{equation}
    X^\mathcal{H} \rightarrow X^\mathcal{G} \times_{Y^\mathcal{G}} Y^\mathcal{H}
  \end{equation}
  induced by a cofibration of groupoids $f : \mathcal{G} \rightarrowtail \mathcal{H}$ and a fibration $g : X \rightarrow Y$ in $\operatorname{Alg} \mathcal{M}$ is a fibration and a weak equivalence if either $f$ or $g$ is a weak equivalence.
\end{proof}

\begin{remark}
  \citet{homotopy-theoretic-aspects} defines model category structure on $T\text{-Alg}_s$, the category of strict algebras and their strict morphisms for a 2-monad $T$ on a model $\mathrm{Gpd}$-category.
  If we choose for $T$ the monad on $\mathrm{Cat}$ assigning to every category the free lcc category generated by it, then $T\text{-Alg}_s$ is $\mathrm{Gpd}$-equivalent to $\mathrm{sLcc}$, so it is natural ask whether their model category structures agree.

  The model category structure on $T\text{-Alg}_s$ is defined by transfer from $\mathrm{Cat}$, i.e. such that the forgetful functor $T\text{-Alg}_s \rightarrow \mathrm{Cat}$ reflects fibrations and weak equivalences.
  The same is true for $\mathrm{sLcc} \rightarrow \mathrm{Lcc}$, and this functor is valued in the fibrant objects of $\mathrm{Lcc}$.
  The restriction of the functor $\mathrm{Lcc} \rightarrow \mathrm{Cat}$ to fibrant objects reflects weak equivalences and trivial fibrations because equivalences of categories preserve and reflect universal objects that exist in domain and codomain.
  Thus $\mathrm{sLcc}$ and $T\text{-Alg}_s$ have the same sets of weak equivalences and trivial fibrations, hence their model category structures coincide.
\end{remark}

\section{Algebraically cofibrant strict lcc categories}
\label{sec:algebraically-cofibrant}

As noted in Remark \ref{rem:slcc-slice-problem}, to interpret dependent sum and dependent product types in $\mathrm{sLcc}$, we would need to relate context extensions $\Gamma.\sigma$ to slice categories $\Gamma_{/ \sigma}$.
In this section we discuss how this problem can be circumvented by considering yet another Quillen equivalent model category: The category of algebraically \emph{cofibrant} strict lcc categories.

The slice category $\mathcal{C}_{/ x}$ of an lcc category $\mathcal{C}$ is bifreely generated by (any choice of) the pullback functor $\sigma^* : \mathcal{C} \rightarrow \mathcal{C}_{/ x}$ and the diagonal $d : x \rightarrow x \times x$, viewed as a morphism $1 \rightarrow x^*(x)$ in $\mathcal{C}_{/ x}$:
Given a pair of lcc functor $f : \mathcal{C} \rightarrow \mathcal{D}$ and morphism $s : 1 \rightarrow f(x)$ in $\mathcal{D}$, there is an lcc functor $g : \mathcal{C}_{/ x} \rightarrow \mathcal{D}$ that commutes with $f$ and $x^*$ up to a natural isomorphism under which $g(d)$ corresponds to $s$, and every other lcc functor with this property is uniquely isomorphic to $g$.

Phrased in terms of model category theory, this biuniversal property amounts to asserting that the square
\begin{equation}
  \label{eq:homotopy-pushout-lcc}
  \begin{tikzcd}
    \{ t, x \} \arrow[r] \arrow[d] & \{ d : t \rightarrow x \} \arrow[d] \\
    \mathcal{C} \arrow[r, "x^*"] & \mathcal{C}_{/ x}
  \end{tikzcd}
\end{equation}
is a homotopy pushout square in $\mathrm{Lcc}$.
Here $\{t, x\} = \{t, x\}^\flat$ denotes the discrete category with two objects and no markings, from which $\{d : t \rightarrow x\}$ is obtained by adjoining a single morphism $t \rightarrow x$.
The left vertical map $\{ t, x \} \rightarrow \mathcal{C}$ maps $t$ to some terminal object and $x$ to $x$, and the right vertical map maps $d$ to the diagonal $x \rightarrow x^*(x)$ in $\mathcal{C}_{/ x}$.

Recall from Proposition \ref{prop:slcc-model} that a context extension $\Gamma.\sigma$ in $\mathrm{sLcc}$ is defined by the 1-categorical pushout square
\begin{equation}
  \label{eq:slcc-context-extension-2}
  \begin{tikzcd}
    F(\{ t, \sigma \}) \arrow[d] \arrow[r] & F(\{ v : t \rightarrow \sigma \}) \arrow[d] \\
    \Gamma \arrow[r, "p"] & \Gamma.\sigma.
  \end{tikzcd}
\end{equation}
Because $F \dashv G$ is a Quillen equivalence, we should thus expect to find weak equivalences relating $\Gamma_{/ \sigma}$ to $\Gamma.\sigma$ if the pushout \eqref{eq:slcc-context-extension-2} is also a \emph{homotopy} pushout.

By \citet[Proposition A.2.4.4]{higher-topos-theory}, this is the case if $\Gamma$, $F(\{t, \sigma\})$ and $F(\{v : t, \sigma\})$ are cofibrant, and the map $F(\{t, \sigma\}) \rightarrow F(\{v : t, \sigma\})$ is a cofibration.
The cofibrations of $\mathrm{Lcc}$ are the maps which are injective on objects.
It follows that $\{t, \sigma\}$ and $\{ v : t \rightarrow \sigma\}$ are cofibrant lcc sketches, and that the inclusion of the former into the latter is a cofibration.
$F$ is a left Quillen functor and hence preserves cofibrations.
Thus the pushout \eqref{eq:slcc-context-extension-2} is a homotopy pushout if $\Gamma$ is cofibrant.

Note that components of the counit $\varepsilon : FG \Rightarrow \mathrm{Id} : \mathrm{sLcc} \rightarrow \mathrm{sLcc}$ are cofibrant replacements:
Every lcc sketch is cofibrant in $\mathrm{Lcc}$, every strict lcc category is fibrant in $\mathrm{sLcc}$, and $F \dashv G$ is a Quillen equivalence.
It follows that a strict lcc category $\Gamma$ is cofibrant if and only if the counit $\varepsilon_\Gamma$ is a retraction, say with section $\lambda : \Gamma \rightarrow F(G(\Gamma))$.

And indeed, this section can be used to strictify the pullback functor.
We have $\sigma^* : G(\Gamma) \rightarrow G(\Gamma_{/ \sigma})$, which induces a strict lcc functor $\overline{\sigma^*} : F(G(\Gamma)) \rightarrow \Gamma_{/ \sigma}$.
Now let
\begin{equation}
  (\sigma^*)^s : \Gamma \xrightarrow{\lambda} F(G(\Gamma)) \xrightarrow{\overline{\sigma^*}} \Gamma_{/ \sigma},
\end{equation}
which is naturally isomorphic to $\sigma^*$.
Adjusting the domain and codomain of the diagonal $d$ suitably to match $(\sigma^*)^s$, we thus obtain the desired comparison functor $\langle \lambda (\sigma^*)^s, d \rangle : \Gamma.\sigma \rightarrow \Gamma_{/ \sigma}$.

At first we might thus attempt to restrict the category of contexts to the cofibrant strict lcc categories $\Gamma$, for which sections $\lambda : \Gamma \rightarrow F(G(\Gamma))$ exist.
Indeed, cofibrant objects are stable under pushouts along cofibrations, so the context extension $\Gamma.\sigma$ will be cofibrant again if $\Gamma$ is cofibrant.
The dependent product type $\mathbf{\Pi}_\sigma \, \tau$ would be defined by application of
\begin{equation}
  \begin{tikzcd}
    \Gamma.\sigma \arrow[r] & \Gamma_{/ \sigma} \arrow[r, "\Pi_\sigma"] & \Gamma
  \end{tikzcd}
\end{equation}
to $\tau$.
Unfortunately, the definition of the comparison functor $\Gamma.\sigma \rightarrow \Gamma_{/ \sigma}$ required a \emph{choice} of section $\lambda : \Gamma \rightarrow F(G(\Gamma))$, and this choice will not generally be compatible with strict lcc functors $\Gamma \rightarrow \Delta$.
The dependent products defined as above will thus not be stable under substitution.

To solve this issue, we make the section $\lambda$ part of the structure.
Similarly to how strict lcc categories have associated structure corresponding to their fibrancy in lcc, we make the section $\lambda$ witnessing the cofibrancy of strict lcc categories part of the data, and require morphisms to preserve it.
We thus consider algebraically cofibrant objects, which, dually to algebraically fibrant objects, are defined as coalgebras for a cofibrant replacement comonad.
As in the case of algebraically fibrant objects, we are justified in doing so because we obtain an equivalent model category:

\begin{theorem}[\citet{coalgebraic-models} Lemmas 1.2 and 1.3, Theorems 1.4 and 2.5]
  \label{th:coalgebraic-model-category}
  Let $\mathcal{M}$ be a combinatorial and model $\mathrm{Gpd}$-category.
  Then there are arbitrarily large cardinals $\lambda$ such that
  \begin{enumerate}[label={(\arabic*)}]
    \item
      $\mathcal{M}$ is locally $\lambda$-presentable;
    \item
      $\mathcal{M}$ is cofibrantly generated with a set of generating cofibrations for which domains and codomains are $\lambda$-presentable objects;
    \item
      an object $X \in \operatorname{Ob} \mathcal{M}$ is $\lambda$-presentable if and only if the functor $\mathcal{M}(X, -) : \mathcal{M} \rightarrow \mathrm{Gpd}$, given by the groupoid enrichment of $\mathcal{M}$, preserves $\lambda$-filtered colimits.
  \end{enumerate}

  Let $\lambda$ be any such cardinal.
  Then there is a cofibrant replacement $\mathrm{Gpd}$-comonad $C : \mathcal{M} \rightarrow \mathcal{M}$ that preserves $\lambda$-filtered colimits. 
  Let $C$ be any such comonad and denote its category of coalgebras by $\operatorname{Coa} \mathcal{M}$.

  Then the forgetful functor $U : \operatorname{Coa} \mathcal{M} \rightarrow \mathcal{M}$ has a left adjoint $V$.
  $\operatorname{Coa} \mathcal{M}$ is a complete and cocomplete $\mathrm{Gpd}$-category, and $V \dashv U$ is a $\mathrm{Gpd}$-adjunction.
  The model category structure of $\mathcal{M}$ can be transferred along $V \dashv U$, making $\operatorname{Coa} \mathcal{M}$ a model $\mathrm{Gpd}$-category.
  $V \dashv U$ is a Quillen equivalence.
\end{theorem}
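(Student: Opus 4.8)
The statement collects several facts whose proofs I would organize in three stages, following the structure of the algebraically fibrant case (Theorem~\ref{th:algebraically-fibrant-model-category} and Lemma~\ref{lem:algebraically-fibrant-lproper-simplicial}) and dualizing wherever possible.

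First, the existence of arbitrarily large $\lambda$ satisfying (1)--(3). Since $\mathcal{M}$ is combinatorial it is locally presentable, so by standard sharp-cardinal arguments in the theory of locally presentable categories there is a proper class of regular cardinals $\lambda$ for which $\mathcal{M}$ is locally $\lambda$-presentable and a chosen generating set of cofibrations has $\lambda$-presentable domains and codomains; this gives (1) and (2). For (3) I would argue that the $\mathrm{Gpd}$-enrichment is accessible: the tensor $\mathrm{Gpd} \times \mathcal{M} \to \mathcal{M}$ and the hom $\mathcal{M}(X, -)$ are accessible functors, and $\mathrm{Gpd}$ is locally finitely presentable, so that the underlying functor $\mathrm{Gpd} \to \mathrm{Set}$ reflects $\lambda$-filtered colimits and $\lambda$-presentability up to a uniform bound. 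Enlarging $\lambda$ past this bound forces the enriched hom $\mathcal{M}(X, -)$ to preserve $\lambda$-filtered colimits exactly when $X$ is $\lambda$-presentable, which is (3). This step is bookkeeping with accessibility, with no real obstacle beyond choosing $\lambda$ large enough simultaneously for all three conditions.

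Second, the construction of $C$. I would run the (enriched) algebraic small object argument on the chosen generating cofibrations to produce an algebraic weak factorization system on $\mathcal{M}$ for the (cofibration, trivial fibration) pair. Restricting the resulting comonad on $\mathcal{M}^\rightarrow$ to the maps out of the initial object yields a functorial cofibrant replacement comonad $C$ with counit $\varepsilon_X : CX \to X$ a trivial fibration. Because the generators are $\lambda$-presentable and the argument is $\lambda$-accessible, $C$ preserves $\lambda$-filtered colimits; running it $\mathrm{Gpd}$-enrichedly makes $C$ a $\mathrm{Gpd}$-comonad. This is Lemmas~1.2 and~1.3 of \citep{coalgebraic-models}. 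Now $\operatorname{Coa} \mathcal{M}$ is the category of $C$-coalgebras; since $C$ is an accessible comonad on a locally presentable category, $\operatorname{Coa} \mathcal{M}$ is locally presentable and $U$ is comonadic, and the left adjoint $V$ is produced as in \citep[Theorem~1.4]{coalgebraic-models}. I would lift the $\mathrm{Gpd}$-enrichment exactly as in Lemma~\ref{lem:algebraically-fibrant-lproper-simplicial}, but dually: construct tensors $\mathcal{G} \otimes X$ with $U(\mathcal{G} \otimes X) = \mathcal{G} \otimes U(X)$, carrying the coalgebra structure induced from $X$ by functoriality of $C$, and then invoke \citet[Theorem 4.85]{basic-concepts-of-enriched-category-theory} to promote $V \dashv U$ to a $\mathrm{Gpd}$-adjunction. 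The model structure of $\mathcal{M}$ is then transferred along $V \dashv U$, so that $U$ reflects weak equivalences and fibrations, dually to $G$ in the algebraically fibrant case; finally $V \dashv U$ is a Quillen equivalence because $\varepsilon_X : CX \to X$ is a natural weak equivalence, so $U$ and the derived functor of $V$ are mutually inverse on homotopy categories.

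The main obstacle is the existence of the left adjoint $V$ together with the acyclicity condition for the transfer. Unlike the fibrant case, where the forgetful functor from algebras automatically carries a free left adjoint and the transfer is routine, the forgetful functor from coalgebras of a general accessible comonad need not admit the adjoint required here; one genuinely uses that $C$ arises from an algebraic weak factorization system and preserves $\lambda$-filtered colimits. I would therefore isolate this as the technical heart and defer it to \citep[Theorems~1.4 and~2.5]{coalgebraic-models}, checking only that our $\mathcal{M}$, being a combinatorial model $\mathrm{Gpd}$-category, meets their hypotheses and that the enrichment lifts as described above.
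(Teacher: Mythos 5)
The paper does not prove this theorem at all: it is imported verbatim from the cited reference, so the only ``approach'' to compare against is deferral to that source, and your decision to isolate the technical core and cite \citep[Theorems 1.4 and 2.5]{coalgebraic-models} is exactly what the paper does. Your reconstruction of the first two stages --- the cardinal bookkeeping for (1)--(3), the (enriched) algebraic small object argument producing an accessible cofibrant replacement $\mathrm{Gpd}$-comonad, local presentability of the coalgebra category, and the enrichment lifted dually to Lemma \ref{lem:algebraically-fibrant-lproper-simplicial} by building tensors with $U(\mathcal{G} \otimes X) = \mathcal{G} \otimes U(X)$ --- is a faithful outline of how that source proceeds.

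There is, however, a genuine gap at the step you yourself flag as the technical heart, and the deferral does not close it. The forgetful functor $U : \operatorname{Coa} \mathcal{M} \rightarrow \mathcal{M}$ is comonadic, hence a \emph{left} adjoint: its adjoint is the cofree coalgebra functor $R(X) = (C(X), \delta_X)$ sitting on the right, and $U$ creates colimits but does not preserve limits (the terminal object of $\operatorname{Coa} \mathcal{M}$ is $R(1)$, whose underlying object is $C(1)$ rather than $1$), so $U$ cannot admit a left adjoint $V$ for a general cofibrant replacement comonad, and neither accessibility nor the algebraic weak factorization system repairs this. What the cited theorems actually establish is the dual of Theorem \ref{th:algebraically-fibrant-model-category}: the model structure on $\operatorname{Coa} \mathcal{M}$ is \emph{left-induced} along $U$ (so $U$ creates cofibrations and weak equivalences, and every object of $\operatorname{Coa} \mathcal{M}$ is cofibrant), and $U \dashv R$ is the Quillen equivalence with $U$ left Quillen. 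Your plan to right-transfer along $V \dashv U$ ``so that $U$ reflects weak equivalences and fibrations, dually to $G$'' is therefore not the dualization the source performs and cannot be carried out as written; note also that your own construction of tensors preserved by $U$ tacitly uses that $U$ is a left adjoint, which is in tension with $V \dashv U$. (The slip is arguably already present in the statement as printed, inherited from an incomplete dualization of the algebraically fibrant case; but as a proof of that statement your argument fails exactly where you predicted it would be hardest, and the citation proves the other version.)
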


For $\mathcal{M} = \mathrm{sLcc}$, the first infinite cardinal $\omega$ satisfies the three conditions of Theorem \ref{th:coalgebraic-model-category}, and $C = FG$ is a suitable cofibrant replacement comonad.

\begin{definition}
  The covariant cwf structure on $\operatorname{Coa} \mathrm{sLcc}$ is defined as the composite
  \begin{equation}
    \operatorname{Coa} \mathrm{sLcc} \rightarrow \mathrm{sLcc} \rightarrow \mathrm{Fam}
  \end{equation}
  in terms of the covariant cwf structure on $\mathrm{sLcc}$.
\end{definition}

We denote by $\eta : \mathrm{Id} \Rightarrow G F : \mathrm{Lcc} \rightarrow \mathrm{Lcc}$ the unit and by $\varepsilon : F G \Rightarrow \mathrm{Id} : \mathrm{sLcc} \rightarrow \mathrm{sLcc}$ the counit of the adjunction $F \dashv G$.

\begin{lemma}
  \label{lem:coalgebra-vs-eta}
  Let $\lambda : \Gamma \rightarrow F(G(\Gamma))$ be an $FG$-coalgebra.
  Then there is a canonical natural isomorphism $\phi : G(\lambda) \cong \eta_{G(\Gamma)} : G(\Gamma) \rightarrow G(F(G(\Gamma)))$ of lcc functors which is compatible with morphisms of $FG$-coalgebras.
\end{lemma}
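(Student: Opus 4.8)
The plan is to exhibit $\phi$ as the essentially unique isomorphism that becomes an identity upon composition with $G(\varepsilon_\Gamma)$, exploiting that $\mathrm{Lcc}$ is a model $\mathrm{Gpd}$-category.

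First I would record the two relevant identities. The triangle identity for $F \dashv G$ gives $G(\varepsilon_\Gamma) \circ \eta_{G(\Gamma)} = \mathrm{id}_{G(\Gamma)}$, while applying $G$ to the counit law $\varepsilon_\Gamma \circ \lambda = \mathrm{id}_\Gamma$ of the coalgebra gives $G(\varepsilon_\Gamma) \circ G(\lambda) = \mathrm{id}_{G(\Gamma)}$. Thus both $\eta_{G(\Gamma)}$ and $G(\lambda)$ are sections of $G(\varepsilon_\Gamma)$. I then note that $G(\varepsilon_\Gamma)$ is a weak equivalence: by Theorem \ref{th:algebraically-fibrant-model-category} the unit $\eta_{G(\Gamma)}$ is a trivial cofibration, hence a weak equivalence, so two-out-of-three applied to the first identity shows $G(\varepsilon_\Gamma)$ is a weak equivalence in $\mathrm{Lcc}$ (equivalently $\varepsilon_\Gamma$ is one in $\mathrm{sLcc}$, since $G$ reflects weak equivalences).

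Second, I would pass to mapping groupoids. The objects $G(\Gamma)$ and $G(F(G(\Gamma)))$ are fibrant lcc sketches, being images under $G$, and $G(\Gamma)$ is cofibrant since every lcc sketch is. Because $\mathrm{Lcc}$ is a model $\mathrm{Gpd}$-category (Lemma \ref{lem:simplicial-marked-objects}), the enriched pushout-product axiom together with Ken Brown's lemma imply that post-composition with the weak equivalence $G(\varepsilon_\Gamma)$ induces an equivalence of groupoids $\mathrm{Lcc}(G(\Gamma), G(F(G(\Gamma)))) \to \mathrm{Lcc}(G(\Gamma), G(\Gamma))$; by Proposition \ref{prop:lcc-model-cat} these mapping groupoids are precisely the groupoids of lcc functors and their natural isomorphisms. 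Under this equivalence both $G(\lambda)$ and $\eta_{G(\Gamma)}$ are sent to the single object $\mathrm{id}_{G(\Gamma)}$ by the identities above. Since the functor is fully faithful, I obtain a unique natural isomorphism $\phi \colon G(\lambda) \cong \eta_{G(\Gamma)}$ characterised by the property that whiskering it with $G(\varepsilon_\Gamma)$ yields the identity $2$-cell on $\mathrm{id}_{G(\Gamma)}$; this is the canonical $\phi$.

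Third, for compatibility I would take a morphism $h \colon \Gamma \to \Delta$ of $FG$-coalgebras, i.e.\@ $\lambda_\Delta \circ h = FG(h) \circ \lambda_\Gamma$. Applying $G$ to this identity, together with naturality of $\eta$, shows that the horizontal composites $GFG(h) \cdot \phi_\Gamma$ and $\phi_\Delta \cdot G(h)$ share their source $GFG(h) \circ G(\lambda_\Gamma) = G(\lambda_\Delta) \circ G(h)$ and their target $GFG(h) \circ \eta_{G(\Gamma)} = \eta_{G(\Delta)} \circ G(h)$. To see they agree I whisker both with $G(\varepsilon_\Delta)$: using naturality of $\varepsilon$ in the form $\varepsilon_\Delta \circ FG(h) = h \circ \varepsilon_\Gamma$ and the defining property $G(\varepsilon) \cdot \phi = \mathrm{id}$ of each $\phi$, both composites reduce to $\mathrm{id}_{G(h)}$. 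As whiskering with the weak equivalence $G(\varepsilon_\Delta)$ is the action on $2$-cells of the fully faithful equivalence of the second step, the two horizontal composites coincide, which is the asserted naturality of $\phi$.

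The main obstacle I expect is the second step: justifying that post-composition with a weak equivalence between fibrant objects is an equivalence of the mapping groupoids, and then organising the $2$-categorical bookkeeping of sources, targets and whiskerings in the compatibility clause so that the defining property of $\phi$ applies cleanly. The remaining manipulations are direct consequences of the triangle identities and the coalgebra laws.
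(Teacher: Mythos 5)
Your proof is correct and follows essentially the same route as the paper: both arguments combine the triangle identity $G(\varepsilon_\Gamma)\circ\eta_{G(\Gamma)}=\mathrm{id}$ with the coalgebra counit law $\varepsilon_\Gamma\circ\lambda=\mathrm{id}$ and an equivalence of mapping groupoids supplied by the model $\mathrm{Gpd}$-structure of $\mathrm{Lcc}$, and both deduce compatibility with coalgebra morphisms from the uniqueness of the lifted $2$-cell. The only difference is that the paper uses \emph{pre}composition along the trivial cofibration $\eta_{G(\Gamma)}$ (which is a trivial fibration of mapping groupoids directly from the enrichment axioms, so no appeal to Ken Brown's lemma is needed) to first produce $\psi:\mathrm{id}\cong\eta_{G(\Gamma)}\circ G(\varepsilon_\Gamma)$ and then whiskers with $G(\lambda)$, whereas you use \emph{post}composition along the weak equivalence $G(\varepsilon_\Gamma)$; both characterisations of $\phi$ are equally canonical.
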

\begin{proof}
  It suffices to construct a natural isomorphism
  \begin{equation}
    \psi : \mathrm{id} \cong \eta_{G(\Gamma)} \circ G(\varepsilon_\Gamma)
  \end{equation}
  of lcc endofunctors on $G(F(G(\Gamma)))$ for every strict lcc category $\Gamma$, because then
  \begin{equation}
    \psi \circ G(\lambda) : G(\lambda) \cong \eta_{G(\Gamma)} \circ G(\varepsilon_\Gamma \lambda) = \eta_{G(\Gamma)}
  \end{equation}
  for every coalgebra $\lambda : \Gamma \rightarrow F(G(\Gamma))$.

  $\eta_{G(\Gamma)}$ is a trivial cofibration, so the map
  \begin{equation}
    \label{eq:eta-precomposition}
    - \circ \eta_{G(\Gamma)} : \mathrm{Lcc}(G(F(G(\Gamma))), G(F(G(\Gamma)))) \rightarrow \mathrm{Lcc}(G(\Gamma), G(F(G(\Gamma)))) 
  \end{equation}
  is a trivial fibration of groupoids.
  By one of the triangle identities of units and counits, we have $\eta_{G(\Gamma)} \circ G(\varepsilon_\Gamma) \circ \eta_{G(\Gamma)} = \eta_{G(\Gamma)}$.
  Thus both $\mathrm{id}_{G(\Gamma)}$ and $\eta_{G(\Gamma)} \circ G(\varepsilon_\Gamma)$ are sent to $\eta_{G(\Gamma)}$ under the surjective equivalence \eqref{eq:eta-precomposition}, and so we can lift the identity natural isomorphism on $\eta_{G(\Gamma)}$ to an isomorphism $\psi$ as above.
  Since the lift is unique, it is preserved under strict lcc functors in $\Gamma$.
\end{proof}

\begin{proposition}
  \label{prop:coa-slcc-exts}
  The covariant cwf $\operatorname{Coa} \mathrm{sLcc}$ has an empty context and context extensions, and the forgetful functor $\operatorname{Coa} \mathrm{sLcc} \rightarrow \mathrm{sLcc}$ preserves both.
\end{proposition}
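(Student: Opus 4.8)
The plan is to transport the empty context and the context extensions of $\mathrm{sLcc}$ (Proposition \ref{prop:slcc-model}) across the forgetful functor $U : \operatorname{Coa}\mathrm{sLcc} \rightarrow \mathrm{sLcc}$. Since the covariant cwf structure on $\operatorname{Coa}\mathrm{sLcc}$ is by definition computed through $U$, proving that $U$ preserves an empty context or a context extension amounts to equipping the relevant object of $\mathrm{sLcc}$ with an $FG$-coalgebra structure for which the structural maps ($p$, the inclusion of the empty context) become coalgebra morphisms. The empty context is immediate: $\operatorname{Coa}\mathrm{sLcc}$ is cocomplete and $U$, being comonadic (Theorem \ref{th:coalgebraic-model-category}), preserves colimits, so the initial object of $\operatorname{Coa}\mathrm{sLcc}$ lies over the initial strict lcc category, which is the empty context of $\mathrm{sLcc}$.

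For context extensions, fix a coalgebra $\Gamma$ with structure map $\lambda : U(\Gamma) \rightarrow FG(U(\Gamma))$ and a type $\sigma \in \operatorname{Ob} U(\Gamma)$, and let $U(\Gamma).\sigma$ be the $\mathrm{sLcc}$-context extension realized as the pushout \eqref{eq:slcc-context-extension-2}, with projection $p$ and variable $v : 1 \rightarrow p(\sigma)$. The key input is that the counit $\varepsilon_Y : FG(Y) \rightarrow Y$ is \emph{fully faithful} for every $Y \in \mathrm{sLcc}$: since every lcc sketch is cofibrant and every strict lcc category is fibrant, $\varepsilon_Y$ is a weak equivalence between fibrant objects (Theorem \ref{th:algebraically-fibrant-model-category}), hence an equivalence of the underlying lcc categories (Proposition \ref{prop:lcc-model-cat}). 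I would then define the coalgebra structure $\mu : U(\Gamma).\sigma \rightarrow FG(U(\Gamma).\sigma)$ by the universal property of the pushout. On the $U(\Gamma)$-summand I set $\mu \circ p = FG(p) \circ \lambda$, which forces $p$ to be a coalgebra morphism and gives $\varepsilon \circ \mu \circ p = p \circ \varepsilon \circ \lambda = p$. On the freely adjoined arrow of $F(\{v : t \rightarrow \sigma\})$ I send the generator to the unique morphism $m$ of $FG(U(\Gamma).\sigma)$ with source $FG(p)\lambda(1)$ and target $FG(p)\lambda(\sigma)$ satisfying $\varepsilon(m) = v$; such an $m$ exists and is unique because $\varepsilon_{U(\Gamma).\sigma}$ is fully faithful and $\varepsilon$ carries the prescribed source and target to $p(1)$ and $p(\sigma)$.

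It then remains to verify the coalgebra axioms and universality. That $\varepsilon \circ \mu = \mathrm{id}$ is checked summand-wise from $\varepsilon \circ \lambda = \mathrm{id}$ and $\varepsilon(m) = v$; coassociativity is likewise checked on the two summands, reducing on the $U(\Gamma)$-part to coassociativity of $\lambda$ and on the adjoined arrow to the observation that both composites send the generator to morphisms with equal image under the fully faithful $\varepsilon_{FG(U(\Gamma).\sigma)}$. For the universal property, given a coalgebra morphism $f : \Gamma \rightarrow \Delta$ and a term $w : 1 \rightarrow f(\sigma)$, I take the unique strict lcc functor $\langle f, w \rangle$ supplied by the $\mathrm{sLcc}$-universal property of $U(\Gamma).\sigma$; it is automatically a coalgebra morphism, since $\lambda_\Delta \circ \langle f, w \rangle = FG(\langle f, w \rangle) \circ \mu$ holds on the $U(\Gamma)$-summand because $f$ is a coalgebra morphism, and on the adjoined arrow because both sides send the generator to a morphism whose image under the fully faithful $\varepsilon_{U(\Delta)}$ is $w$. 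Uniqueness is inherited from $\mathrm{sLcc}$ because $U$ is faithful. As $U(\Gamma).\sigma$, $p$ and $v$ are the underlying data by construction, $U$ preserves context extensions.

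The main obstacle is precisely the strictness demanded by the cwf notion of context extension. The obvious coalgebra structures $F\eta$ on the free sketches $F(\{t,\sigma\})$ and $F(\{v : t \rightarrow \sigma\})$ make their inclusion a coalgebra morphism, but make the leg $F(\{t,\sigma\}) \rightarrow U(\Gamma)$ a coalgebra morphism only up to isomorphism (the two candidate lifts of this map through $\varepsilon_{U(\Gamma)}$ are merely homotopic), so the defining pushout cannot be lifted leg-by-leg along $U$. The device that resolves this is the full faithfulness of $\varepsilon$: it converts the single piece of genuinely new data, the image of the variable morphism, into a uniquely determined lift, and thereby promotes every coherence isomorphism that would otherwise intervene to a strict equality of morphisms in $FG(U(\Gamma).\sigma)$.
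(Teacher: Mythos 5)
Your proof is correct, and its overall architecture matches the paper's: both define the coalgebra structure $\lambda.\sigma$ on $\Gamma.\sigma$ via the universal property of the pushout \eqref{eq:slcc-context-extension-2}, forcing the restriction along $p$ to be $F(G(p)) \circ \lambda$ and sending the variable $v$ to a distinguished lift, and both then check that $\langle f, w\rangle$ is a coalgebra morphism by comparing the two sides on the two summands. The difference is in the device used to produce the lift of $v$ and to force all the would-be coherence isomorphisms into equalities. The paper takes $\eta_{\Gamma.\sigma}(v)$ and conjugates it by the unique isomorphism of terminal objects and by a component of the natural isomorphism $\phi : G(\lambda) \cong \eta_{G(\Gamma)}$ supplied by Lemma \ref{lem:coalgebra-vs-eta}; naturality in coalgebra morphisms is then inherited from the naturality clause of that lemma. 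You instead observe that the counit $\varepsilon$ is a weak equivalence between fibrant objects, hence fully faithful on underlying categories (Theorem \ref{th:algebraically-fibrant-model-category} together with Proposition \ref{prop:lcc-model-cat}), and characterize the image of $v$ as the unique morphism with prescribed source and target lying over $v$. This bypasses Lemma \ref{lem:coalgebra-vs-eta} entirely and makes the compatibility checks essentially automatic, since any two candidate morphisms with equal source, target and $\varepsilon$-image must coincide; it also lets you verify the comonad coalgebra axioms (counit and coassociativity) explicitly, which the paper's proof leaves implicit. The paper's route buys a reusable natural isomorphism $\phi$ that is needed again later (e.g.\@ in Lemma \ref{lem:strictification}), whereas yours is the more economical argument for this proposition in isolation.
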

\begin{proof}
  The model category $\operatorname{Coa} \mathrm{sLcc}$ has an initial object, i.e.\@ an empty context.
  Its underlying strict lcc category $\Gamma$ is the initial strict lcc category, and the structure map $\lambda : \Gamma \rightarrow F(G(\Gamma))$ is the unique strict lcc functor with this signature.

  Now let $\lambda : \Gamma \rightarrow F(G(\Gamma))$ be an $FG$-coalgebra and $\Gamma \vdash \sigma$ be a type.
  We must construct coalgebra structure $\lambda.\sigma : \Gamma.\sigma \rightarrow F(G(\Gamma.\sigma))$ on the context extension in $\mathrm{sLcc}$ such that
  \begin{equation}
    \begin{tikzcd}
      \Gamma \arrow[r, "p"] \arrow[d, "\lambda"] & \Gamma.\sigma \arrow[d, "\lambda.\sigma"] \\
      F(G(\Gamma)) \arrow[r, "F(G(p))"] & F(G(\Gamma.\sigma))
    \end{tikzcd}
  \end{equation}
  commutes, and show that the strict lcc functor $\langle f, w \rangle : \Gamma.\sigma \rightarrow \Delta$ induced by a coalgebra morphism $f : (\Gamma, \lambda) \rightarrow (\Delta, \lambda')$ and a term $\Delta \vdash w : f(\sigma)$ is a coalgebra morphism.

  Let $v : 1 \rightarrow p(\sigma)$ be the variable term of the context extension of $\Gamma$ by $\sigma$.
  Then $\eta_{\Gamma.\sigma}(v)$ is a morphism
  \begin{equation}
    \eta_{\Gamma.\sigma}(1) \rightarrow \eta_{\Gamma.\sigma}(p(\sigma)) = F(G(p))(\eta_\Gamma(\sigma))
  \end{equation}
  in $F(G(\Gamma.\sigma))$.
  $\eta_{\Gamma.\sigma}(1)$ is a terminal object and hence uniquely isomorphic to the canonical terminal object $1$ of $F(G(\Gamma.\sigma))$, and $F(G(p))(\eta_\Gamma(\sigma))$ is isomorphic to $F(G(p))(\lambda(\sigma))$ via a component of $F(G(p)) \circ \phi$, where $\phi$ is the natural isomorphism constructed in Lemma \ref{lem:coalgebra-vs-eta}.
  We thus obtain a term $\Gamma.\sigma \vdash v' : F(G(p))(\lambda(\sigma))$ and can define
  \begin{equation}
    \lambda.\sigma = \langle F(G(p)) \circ \lambda, v' \rangle
  \end{equation}
  by the universal property of $\Gamma.\sigma$.
  $\lambda.\sigma$ is compatible with $p$ and $\lambda$ by construction.

  Now let $f : (\Gamma, \lambda) \rightarrow (\Delta, \lambda')$ be a coalgebra morphism and let $\Delta \vdash w : f(\sigma)$.
  We need to show that
  \begin{equation}
    \begin{tikzcd}[column sep=large, row sep=large]
      \Gamma.\sigma \arrow[r, "{\langle f, w \rangle}"] \arrow[d, "\lambda.\sigma"] & \Delta \arrow[d, "\lambda'"] \\
      F(G(\Gamma.\sigma)) \arrow[r, "{F(G(\langle f, w \rangle}))"] & F(G(\Delta))
    \end{tikzcd}
  \end{equation}
  commutes.
  This follows from the universal property of $\Gamma.\sigma$:
  The two maps $\Gamma.\sigma \rightarrow F(G(\Delta))$ agree after precomposing $p : \Gamma \rightarrow \Gamma.\sigma$ because by assumption $f$ is a coalgebra morphism, and they both map $v$ to the term $F(G(\Delta)) \vdash w' : \lambda'(f(\sigma))$ obtained from $w$ similarly to $v'$ from $v$ because the isomorphism $\phi$ constructed in Lemma \ref{lem:coalgebra-vs-eta} is compatible with coalgebra morphisms.
\end{proof}

For $\mathcal{C}$ a $\mathrm{Gpd}$-category and $x \in \operatorname{Ob} \mathcal{C}$, we denote by $\mathcal{C}_{x / }$ the higher coslice $\mathrm{Gpd}$-category of objects under $x$.
Its objects are morphisms out of $x$, its morphisms are triangles
\begin{equation}
  \begin{tikzcd}
    & x \arrow[dl, "y_0"'] \arrow[dr, "y_1"] \arrow[d, "\overset{\phi}{\cong}", phantom] & \\
    \cdot \arrow[rr, "f"'] & \, & \cdot
  \end{tikzcd}
\end{equation}
in $\mathcal{C}$ which commute up to specified isomorphism $\phi$, and its 2-cells $(f_0, \phi_0) \cong (f_1, \phi_1)$ are 2-cells $\psi : f_0 \cong f_1$ in $\mathcal{C}$ such that $\phi_1 (\psi \circ y_0) = \phi_0$.

\begin{definition}
  \label{def:weak-ext}
  Let $\mathcal{C}$ be an lcc category and $x \in \operatorname{Ob} \mathcal{C}$.
  A \emph{weak context extension} of $\mathcal{C}$ by $x$ consists of an lcc functor $f : \mathcal{C} \rightarrow \mathcal{D}$ and a morphism $v : t \rightarrow f(x)$ with $t$ a terminal object in $\mathcal{D}$ such that the following biuniversal property holds:

  For every lcc category $\mathcal{E}$, lcc functor $g : \mathcal{C} \rightarrow \mathcal{E}$ and morphism $w : u \rightarrow g(x)$ in $\mathcal{E}$ with $u$ terminal, the full subgroupoid of $\mathrm{Lcc}_{\mathcal{C} /}(f, g)$ given by pairs of lcc functor $h : \mathcal{D} \rightarrow \mathcal{E}$ and natural isomorphism $\phi : hf \cong g$ such that the square
  \begin{equation}
    \begin{tikzcd}
      h(t) \arrow[d] \arrow[r, "h(v)"] & h(f(x)) \arrow[d, "\phi_x"] \\
      u \arrow[r, "w"] & g(x)
    \end{tikzcd}
  \end{equation}
  in $\mathcal{D}$ commutes is contractible (i.e.\@ equivalent to the terminal groupoid).
\end{definition}

\begin{remark}
  \label{rem:discrete-weak-ext-mapping}
  Note that the definition entails that mapping groupoids of lcc functors $\mathcal{D} \rightarrow \mathcal{E}$ under $\mathcal{C}$ with $\mathcal{D}$ a weak context extension are equivalent to discrete groupoids.
  Lcc functors $h_0, h_1 : \mathcal{D} \rightarrow \mathcal{E}$ under $\mathcal{C}$ are (necessarily uniquely) isomorphic under $\mathcal{C}$ if and only if they correspond to the same morphism $w : u \rightarrow g(x)$ in $\mathcal{E}$.
\end{remark}

\begin{lemma}
  \label{lem:strictification}
  Let $\lambda : \Gamma \rightarrow F(G(\Gamma))$ be an $FG$-coalgebra and let $\Delta$ be a strict lcc category.
  Then the full and faithful inclusion of groupoids
  \begin{equation}
    \label{eq:forget-strictness}
    \mathrm{sLcc}(\Gamma, \Delta) \subseteq \mathrm{Lcc}(G(\Gamma), G(\Delta))
  \end{equation}
  admits a canonical retraction $f \mapsto f^s$.
  There is a natural isomorphism $\zeta^f : G(f^s) \cong f$, exhibiting the retract \eqref{eq:forget-strictness} as an equivalence of groupoids.
  The retraction $f \mapsto f^s$ and natural isomorphism $\zeta^f$ is $\mathrm{Gpd}$-natural in $(\Gamma, \lambda)$ and $\Delta$.
\end{lemma}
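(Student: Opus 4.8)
The plan is to strictify an arbitrary lcc functor $f : G(\Gamma) \rightarrow G(\Delta)$ by transposing it across $F \dashv G$ and then precomposing with the coalgebra structure map. Concretely, I would write $\bar f := \varepsilon_\Delta \circ F(f) : F(G(\Gamma)) \rightarrow \Delta$ for the adjoint transpose of $f$ and set $f^s := \bar f \circ \lambda : \Gamma \rightarrow \Delta$. Since $\lambda$ is a morphism in $\mathrm{sLcc}$ and $\bar f$ lands in $\mathrm{sLcc}$, both are strict, so $f^s$ is a strict lcc functor; and as the construction only ever composes and transposes, it acts functorially on natural isomorphisms, hence defines a functor $\mathrm{Lcc}(G(\Gamma), G(\Delta)) \rightarrow \mathrm{sLcc}(\Gamma, \Delta)$.

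Next I would produce the natural isomorphism $\zeta^f : G(f^s) \cong f$ and verify that $f \mapsto f^s$ retracts the inclusion on the nose. Expanding $G(f^s) = G(\bar f) \circ G(\lambda)$, Lemma \ref{lem:coalgebra-vs-eta} supplies $\phi : G(\lambda) \cong \eta_{G(\Gamma)}$, and whiskering by $G(\bar f)$ gives an isomorphism $G(\bar f) \circ G(\lambda) \cong G(\bar f) \circ \eta_{G(\Gamma)}$; the triangle identity for the transpose yields the \emph{equality} $G(\bar f) \circ \eta_{G(\Gamma)} = f$, so I take $\zeta^f := G(\bar f) \circ \phi$. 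For the retraction, given a strict $f_0 : \Gamma \rightarrow \Delta$ naturality of $\varepsilon$ gives $\overline{G(f_0)} = \varepsilon_\Delta \circ F(G(f_0)) = f_0 \circ \varepsilon_\Gamma$, whence $(G(f_0))^s = f_0 \circ \varepsilon_\Gamma \circ \lambda$; the coalgebra counit law $\varepsilon_\Gamma \circ \lambda = \mathrm{id}_\Gamma$ collapses this to $f_0$, so the retraction is strict. Together with $\zeta$ and the fact that \eqref{eq:forget-strictness} is full and faithful by the very definition of the mapping groupoids of $\mathrm{sLcc}$, this exhibits the inclusion as an equivalence.

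The substance of the lemma is the naturality in $(\Gamma, \lambda)$, and this is where I expect the real work to lie, and precisely where making $\lambda$ part of the structure is indispensable. For a coalgebra morphism $g : (\Gamma', \lambda') \rightarrow (\Gamma, \lambda)$, precomposition sends $f$ to $f \circ G(g)$, and from the transpose formula I would compute $\overline{f \circ G(g)} = \bar f \circ F(G(g))$, so that $(f \circ G(g))^s = \bar f \circ F(G(g)) \circ \lambda'$; the coalgebra morphism identity $F(G(g)) \circ \lambda' = \lambda \circ g$ then reduces this to $\bar f \circ \lambda \circ g = f^s \circ g$, which is exactly the required naturality. Naturality in $\Delta$ is easier: for strict $h : \Delta \rightarrow \Delta'$, naturality of $\varepsilon$ gives $\overline{G(h) \circ f} = h \circ \bar f$ and hence $(G(h) \circ f)^s = h \circ f^s$. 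In both cases I would confirm that $\zeta$ transforms compatibly, using the compatibility of $\phi$ with coalgebra morphisms from Lemma \ref{lem:coalgebra-vs-eta} together with naturality of $\eta$ and $\varepsilon$. Since every datum is assembled from composition and whiskering, the $\mathrm{Gpd}$-enriched naturality then follows formally, the only genuine inputs being the two identities $\varepsilon_\Gamma \circ \lambda = \mathrm{id}_\Gamma$ and $F(G(g)) \circ \lambda' = \lambda \circ g$, i.e.\@ exactly the coalgebra laws.
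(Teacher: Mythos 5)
Your proposal is correct and follows essentially the same route as the paper's proof: transpose $f$ to $\bar f$ with $G(\bar f)\eta = f$, set $f^s = \bar f\lambda$ and $\zeta^f = G(\bar f)\phi$ using Lemma \ref{lem:coalgebra-vs-eta}, and use $\varepsilon_\Gamma\lambda = \mathrm{id}$ for the retraction property. You simply spell out the naturality verifications (via the coalgebra morphism identity and naturality of $\varepsilon$) that the paper leaves implicit.
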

\begin{proof}
  Let $f : G(\Gamma) \rightarrow G(\Delta)$.
  The transpose of $f$ is a strict lcc functor $\bar f : F(G(\Gamma)) \rightarrow \Delta$ such that $G(\bar f) \eta = f$.
  We set $f^s = \bar f \lambda$ and $\zeta^f = G(\bar f) \phi$ for $\phi : G(\lambda) \cong \eta$ as in Lemma \ref{lem:coalgebra-vs-eta}.
  If $f = G(g)$ already arises from a strict lcc functor $g : \Gamma \rightarrow \Delta$, then $\bar g = g \varepsilon$ and hence $\bar g \lambda = g$.
  The action of the retraction $f \mapsto f^s$ on natural isomorphisms $f_0 \cong f_1$ is defined analogously from the $\mathrm{Gpd}$-enrichment of $F \dashv G$.
\end{proof}

\begin{lemma}
  \label{lem:strict-ext-is-weak-ext}
  Let $(\Gamma, \lambda)$ be an $FG$-coalgebra.
  Then $G(p) : G(\Gamma) \rightarrow G(\Gamma.\sigma)$ and $v : 1 \rightarrow p(\sigma)$ form a weak context extension of $G(\Gamma)$ by $\sigma$.
\end{lemma}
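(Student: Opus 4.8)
My plan is to reduce the biuniversal property of Definition \ref{def:weak-ext} to the single statement that the square
\[
  \begin{tikzcd}
    \{t, \sigma\} \arrow[r] \arrow[d] & \{ v : t \rightarrow \sigma \} \arrow[d] \\
    G(\Gamma) \arrow[r, "G(p)"] & G(\Gamma.\sigma)
  \end{tikzcd}
\]
is a homotopy pushout in $\mathrm{Lcc}$, where the left vertical map sends $t$ to the canonical terminal object $1$ and $\sigma$ to $\sigma$. This is the same translation already used for slice categories around \eqref{eq:homotopy-pushout-lcc} and Lemma \ref{lem:slice-is-weak-ext}: for a fixed lcc category $\mathcal{E}$, the homotopy pullback $\mathrm{Lcc}(G(\Gamma),\mathcal{E}) \times^{h}_{\mathrm{Lcc}(\{t,\sigma\},\mathcal{E})} \mathrm{Lcc}(\{v : t \to \sigma\},\mathcal{E})$ is equivalent to the groupoid of pairs $(g, w)$ with $g : G(\Gamma) \to \mathcal{E}$ an lcc functor and $w : u \to g(\sigma)$ a morphism out of a terminal $u$ (terminality of $u \simeq g(1)$ being automatic since $g$ is lcc), and the homotopy fibre of the comparison map from $\mathrm{Lcc}(G(\Gamma.\sigma),\mathcal{E})$ over such a pair is exactly the groupoid of $(h,\phi)$ of Definition \ref{def:weak-ext}. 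Since a functor of groupoids is an equivalence precisely when all its homotopy fibres are contractible, the homotopy pushout property for all $\mathcal{E}$ is equivalent to the weak context extension property.

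I would obtain the homotopy pushout above by transporting the pushout \eqref{eq:slcc-context-extension-2} across $F \dashv G$. First, \eqref{eq:slcc-context-extension-2} is itself a homotopy pushout in $\mathrm{sLcc}$: the coalgebra structure $\lambda$ is a section of the counit $\varepsilon_\Gamma$, so $\Gamma$ is cofibrant; as $F$ is left Quillen, the sketches $F(\{t,\sigma\})$ and $F(\{v : t \to \sigma\})$ are cofibrant and the top map is a cofibration; hence \citet[Proposition A.2.4.4]{higher-topos-theory} applies.

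Next, for each lcc category $\mathcal{E}$ I would choose a strict lcc structure $E$ with $G(E) = \mathcal{E}$ (possible since every fibrant lcc sketch admits canonical lcc structure). Applying $\mathrm{Lcc}(-,\mathcal{E})$ to the $G(p)$-square and $\mathrm{sLcc}(-,E)$ to \eqref{eq:slcc-context-extension-2}, the two sketch corners match on the nose via the adjunction isomorphisms $\mathrm{Lcc}(K,G(E)) \cong \mathrm{sLcc}(F(K),E)$, while the corners $G(\Gamma)$ and $G(\Gamma.\sigma)$ match up to equivalence via the strictification retractions of Lemma \ref{lem:strictification}, using the coalgebra $(\Gamma,\lambda)$ and the coalgebra $(\Gamma.\sigma,\lambda.\sigma)$ supplied by Proposition \ref{prop:coa-slcc-exts}. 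Thus the square of mapping groupoids for the $G(p)$-square is equivalent to that for \eqref{eq:slcc-context-extension-2}; the latter is a homotopy pullback because \eqref{eq:slcc-context-extension-2} is a homotopy pushout and $E$ is fibrant. Since every lcc category arises as such an $\mathcal{E}$, the $G(p)$-square is a homotopy pushout, and the lemma follows.

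The main obstacle is the coherence bookkeeping in this last step: one must check that the four corner identifications assemble into an equivalence of the entire commuting squares of mapping groupoids, not merely of the corners separately. This relies on the $\mathrm{Gpd}$-naturality of the retraction $f \mapsto f^s$ and of the isomorphism $\zeta^f$ in the coalgebra variable (Lemma \ref{lem:strictification}), together with the fact that $p : \Gamma \to \Gamma.\sigma$ is a morphism of coalgebras (Proposition \ref{prop:coa-slcc-exts}), so that restriction along $G(p)$ corresponds, up to the natural isomorphisms $\zeta$, to restriction along $p$. The compatibility of $\zeta$ with the adjunction transposes on the sketch corners then supplies the remaining $2$-cells making the square of squares commute coherently.
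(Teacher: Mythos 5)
Your proof is correct, but it takes a genuinely different route from the paper's. The paper proves the lemma by hand: given $(f,w)$ it strictifies $f$ to $f^s$ via Lemma \ref{lem:strictification} (using the coalgebra $\lambda$), transports $w$ to $w^s$ along $\zeta^f$, produces the explicit comparison functor $g=\langle f^s,w^s\rangle$ from the strict $1$-categorical universal property of the pushout \eqref{eq:slcc-context-extension-2}, and then verifies uniqueness up to unique isomorphism by again strictifying an arbitrary competitor $(h,\phi)$ and invoking the pushout's universal property on $2$-cells. You instead formalize the motivational discussion that opens Section \ref{sec:algebraically-cofibrant}: translate the biuniversal property into the assertion that the analogue of \eqref{eq:homotopy-pushout-lcc} for $G(p)$ is a homotopy pushout, observe that \eqref{eq:slcc-context-extension-2} is a homotopy pushout because $\lambda$ exhibits $\Gamma$ as a retract of the cofibrant $F(G(\Gamma))$, and transfer across $F\dashv G$ by comparing mapping-groupoid squares. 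Your worry about the ``coherence bookkeeping'' in the last step is in fact easily dispatched: the inclusions $\mathrm{sLcc}(-,E)\subseteq\mathrm{Lcc}(G(-),G(E))$ commute strictly with restriction along $p$ versus $G(p)$, and the triangle identities make the adjunction isomorphisms on the sketch corners commute strictly with the other two edges, so the cube of mapping groupoids commutes on the nose and the levelwise equivalences (Lemma \ref{lem:strictification} for the two strict corners, which is where $\lambda$ and $\lambda.\sigma$ from Proposition \ref{prop:coa-slcc-exts} enter) transfer the homotopy-pullback property. The trade-off is that your argument establishes contractibility of the relevant mapping groupoids without exhibiting a canonical point of them, whereas the paper's explicit $\langle f^s,w^s\rangle$ is reused in Lemma \ref{lem:extension-vs-slice}, where a specific, strictly natural choice of the comparison functor $a=\langle(\sigma^*)^s,d^s\rangle$ is needed; so your proof suffices for the lemma as stated but yields less for the constructions downstream.
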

\begin{proof}
  Let $f : G(\Gamma) \rightarrow \mathcal{E}$ be an lcc functor and $w : t \rightarrow  f(\sigma)$ be a morphism with terminal domain in $\mathcal{E}$.
  Let $\Delta$ be a strict lcc category such that $G(\Delta) = \mathcal{E}$.
  Then by Lemma \ref{lem:strictification} there is an isomorphism $\zeta^f : G(f^s) \cong f$ for some strict lcc functor $f^s : \Gamma \rightarrow \Delta$.
  Set $g = \langle f^s, w^s \rangle$, where $w^s$ is the unique morphism in $G(\Delta)$ such that
  \begin{equation}
    \begin{tikzcd}
      1 \arrow[d] \arrow[r, "w^s"] & f^s(\sigma) \arrow[d, "\zeta^f_\sigma"] \\
      t \arrow[r, "w"] & f(\sigma)
    \end{tikzcd}
  \end{equation}
  commutes.
  (Both vertical arrows are isomorphisms.)
  Now with $g = \langle f^s, w^s \rangle : \Gamma.\sigma \rightarrow \Delta$ we have $\zeta^f : G(g) \circ G(p) \cong f$.

  Let $h : G(\Gamma.\sigma) \rightarrow \mathcal{E}$ and $\phi : h \circ G(p) \cong f$ be any other lcc functor over $G(\Gamma)$ such that
  \begin{equation}
    \begin{tikzcd}
      h(1) \arrow[d] \arrow[r, "h(v)"] & h(\sigma) \arrow[d, "\phi_\sigma"] \\
      t \arrow[r, "w"] & f(\sigma)
    \end{tikzcd}
  \end{equation}
  commutes.
  We need to show that $h$ and $G(g)$ are uniquely isomorphic under $G(\Gamma)$.
  Lemma \ref{lem:strictification} reduces this to the unique existence of an extension of the isomorphism $g p \cong h^s p : \Gamma \rightarrow \Delta$ defined as composite
  \begin{equation}
    G(gp) \cong f \cong h \circ G(p) \cong G((h \circ G(p))^s) = G(h^s p)
  \end{equation}
  to an isomorphism $g \cong h^s : \Gamma.\sigma \rightarrow \Delta$ under $\Gamma$.
  This follows from the construction of $\Gamma.\sigma$ as pushout
  \begin{equation}
    \begin{tikzcd}
      F(\{ t, \sigma \}) \arrow[d] \arrow[r] & F(\{ v : t \rightarrow \sigma \}) \arrow[d] \\
      \Gamma \arrow[r, "p"] & \Gamma.\sigma,
    \end{tikzcd}
  \end{equation}
  and its universal property on 2-cells.
\end{proof}

\begin{lemma}
  \label{lem:slice-is-weak-ext}
  Let $x$ be an object of an lcc category $\mathcal{C}$, and let $x^* : \mathcal{C} \rightarrow \mathcal{C}_{/ x}$ be any choice of pullback functor.
  Denote by $d = \langle \mathrm{id}_x, \mathrm{id}_x \rangle : \mathrm{id}_x \rightarrow x^*(x)$ the diagonal morphism in $\mathcal{C}_{/ x}$.
  Then $x^*$ and $d$ form a weak context extension of $\mathcal{C}$ by $x$.
\end{lemma}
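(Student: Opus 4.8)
The plan is to verify the biuniversal property of Definition \ref{def:weak-ext} directly, by exhibiting a canonical object of the relevant subgroupoid and then showing it is the unique one up to unique isomorphism. Throughout I fix an lcc category $\mathcal{E}$, an lcc functor $g : \mathcal{C} \to \mathcal{E}$ and a morphism $w : u \to g(x)$ with $u$ terminal. First I would construct the candidate lcc functor $h : \mathcal{C}_{/x} \to \mathcal{E}$ as the composite $h = w^* \circ g_{/x}$, where $g_{/x} : \mathcal{C}_{/x} \to \mathcal{E}_{/g(x)}$ is the lcc functor induced by $g$ on slices (applying $g$ to objects and arrows; this is lcc because $g$ preserves finite limits and dependent products) and $w^* : \mathcal{E}_{/g(x)} \to \mathcal{E}_{/u} \cong \mathcal{E}$ is pullback along $w$, post-composed with the canonical isomorphism $\mathcal{E}_{/u} \cong \mathcal{E}$ arising from terminality of $u$. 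As a composite of lcc functors, $h$ is lcc. The witnessing isomorphism $\phi : h x^* \cong g$ is assembled from two standard natural isomorphisms: $g_{/x} \circ x^* \cong g(x)^* \circ g$ (both send $c$ to the projection $g(x) \times g(c) \to g(x)$, since $g$ preserves products), and $w^* \circ g(x)^* \cong \mathrm{id}_{\mathcal{E}}$ (pseudofunctoriality of pullback collapses the composite to pullback along $u \xrightarrow{w} g(x) \to 1$, i.e.\@ along the isomorphism $u \to 1$).

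The heart of the existence part is checking the $d$-$w$ square of Definition \ref{def:weak-ext} commutes. Here I would trace $h(d)$ through the construction: $g_{/x}$ sends the diagonal $d : \mathrm{id}_x \to x^*(x)$ to the diagonal section $\mathrm{id}_{g(x)} \to g(x)^*(g(x))$ (as $g$ preserves diagonals), and pulling this section back along $w$ returns, under the identification $w^*(g(x)^*(g(x))) \cong g(x)$, the morphism $w$ itself. Thus $h(d)$ corresponds to $w$ under the identifications furnished by $\phi_x$ and the canonical isomorphism of terminal objects $h(t) \cong u$, which is exactly the required commutativity. This ``pulling the diagonal back along $w$ returns $w$'' computation is the main obstacle, in that it must be carried out compatibly with all the isomorphisms introduced in the construction of $\phi$; the bookkeeping of these coherences is the only genuinely delicate point.

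For contractibility it remains to show that any two objects $(h_0, \phi_0)$ and $(h_1, \phi_1)$ of the subgroupoid are uniquely isomorphic. The key structural observation is that every object $a : c \to x$ of $\mathcal{C}_{/x}$ is canonically the pullback in $\mathcal{C}_{/x}$ of the diagonal $d : t \to x^*(x)$ along $x^*(a) : x^*(c) \to x^*(x)$. Indeed, this pullback, computed in $\mathcal{C}$ as the pullback of $x \xrightarrow{\Delta} x \times x \xleftarrow{\mathrm{id}_x \times a} x \times c$, is exactly the object $a$, with projection to $x^*(c)$ given by the graph $\langle a, \mathrm{id}_c \rangle$, and this presentation is functorial in $a$. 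Since every $h$ in the subgroupoid preserves pullbacks and terminal objects, its value on each $a$ is forced, up to canonical isomorphism, by its values on the image of $x^*$ (pinned down by $\phi$) and on $d$ (pinned down by the $d$-$w$ square).

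Consequently the defining constraint $\phi_1 \circ (\psi \circ x^*) = \phi_0$ of a $2$-cell in $\mathrm{Lcc}_{\mathcal{C}/}$ determines $\psi$ on all $x^*$-images, and the pullback presentation above then extends it uniquely and naturally to an isomorphism $\psi : h_0 \cong h_1$ over $\mathcal{C}$: on each $a$ one takes the isomorphism induced on pullbacks by $\phi_1^{-1}\phi_0$ on the $x^*$-legs and the identity on the $d$-leg, naturality following from functoriality of the presentation. Uniqueness is immediate because $\psi$ is forced on the generating pullback diagrams. This exhibits the subgroupoid as equivalent to the terminal groupoid, so $x^*$ and $d$ form a weak context extension.
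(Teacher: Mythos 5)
Your proposal is correct and follows essentially the same route as the paper's proof: the candidate functor is the same composite of the sliced functor with pullback along $w$, the comparison isomorphism $\phi$ is obtained from the same two-pullbacks-over-one-cospan observation, and contractibility rests on the identical key fact that every object $a : c \rightarrow x$ of $\mathcal{C}_{/ x}$ is the pullback of the diagonal $d$ along $x^*(a)$ via the graph $\langle a, \mathrm{id}_c \rangle$. No substantive differences to report.
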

\begin{proof}
  Let $\mathcal{E}$ be an lcc category, $f : \mathcal{C} \rightarrow \mathcal{E}$ be an lcc functor and $w : t \rightarrow f(\sigma)$ be a morphism in $\mathcal{E}$ with $t$ terminal.
  We define the induced lcc functor $g : \mathcal{C}_{/ x} \rightarrow \mathcal{E}$ as composition
  \begin{equation}
    \begin{tikzcd}
      \mathcal{C}_{/ x} \arrow[r, "f_{/ x}"] & \mathcal{E}_{/ f(x)} \arrow[r, "w^*"] & \mathcal{E}
    \end{tikzcd}
  \end{equation}
  where $w^* : \mathcal{E}_{ / f(x)} \rightarrow \mathcal{E}_{/ t} \xrightarrow{\sim} \mathcal{E}$ is given by a choice of pullback functor.

  Let $y \in \operatorname{Ob} \mathcal{C}$.
  We denote the composite $f(x) \rightarrow t \xrightarrow{w} f(x)$ by $w'$.
  Then the two squares
  \begin{mathpar}
    \begin{tikzcd}
      g(x^*(y)) \arrow[r] \arrow[d] & f(x \times y) \arrow[d, "f(\mathrm{pr}_1)"] \\
      t \arrow[r, "w"] & f(x)
    \end{tikzcd}
    \and
    \begin{tikzcd}
      f(y) \arrow[r, "{\langle w, \mathrm{id} \rangle}"] \arrow[d] & f(x \times y) \arrow[d, "f(\mathrm{pr}_1)"] \\
      t \arrow[r, "w"] & f(x)
    \end{tikzcd}
  \end{mathpar}
  are both pullbacks over the same cospan.
  Here $\mathrm{pr}_1 = x^*(y)$ denotes the first projection of the product defining the pullback functor $x^*$, and $x \times y$ is the projection's domain.
  (These should not be confused with canonical products in strict lcc categories; $\mathcal{C}$ and $\mathcal{D}$ are only lcc categories.)
  $f$ preserves pullbacks, so $f(x \times y)$ is a product of $f(x)$ with $f(y)$.
  We obtain natural isomorphisms $\phi_y : g(x^*(y)) \cong f(y)$ relating the two pullbacks for all $y$.

  The diagram
  \begin{equation}
    \begin{tikzcd}
      t \arrow[r, "w"] \arrow[d, "w"] & f(x) \arrow[d, "{\langle w', \mathrm{id}\rangle}"] \arrow[r] & t \arrow[d, "w"] \\
      f(x) \arrow[r, "f(d)"] & f(x \times x) \arrow[r, "\mathrm{pr}_1"] & f(x)
    \end{tikzcd}
  \end{equation}
  commutes, and in particular the left square commutes.
  It follows that $\phi$ is compatible with $d$ and $w$.

  $g$ and $\phi$ are unique up to unique isomorphism because for every morphism $k : y \rightarrow x$ in $\mathcal{C}$, i.e.\@ object of $\mathcal{C}_{/ x}$, the square
  \begin{equation}
    \begin{tikzcd}
      k \arrow[r, "{\langle k, \mathrm{id} \rangle}"] \arrow[d, "k"] & x^*(y) \arrow[d, "x^*(k)"] \\
      \mathrm{id}_x \arrow[r, "d"] & x^*(x)
    \end{tikzcd}
  \end{equation}
  is a pullback square in $\mathcal{C}_{/ x}$.
\end{proof}

\begin{lemma}
  \label{lem:extension-vs-slice}
  Let $\lambda : \Gamma \rightarrow F(G(\Gamma))$ be an $FG$-coalgebra and let $\Gamma \vdash \sigma$ be a type.
  Then $G(p) : G(\Gamma) \rightarrow G(\Gamma.\sigma)$ and $\sigma^* : G(\Gamma) \rightarrow G(\Gamma_{/ \sigma})$ are equivalent objects of the coslice category $\mathrm{Lcc}_{G(\Gamma) /}$.
  The equivalence $a : G(\Gamma.\sigma) \rightleftarrows G(\Gamma_{/ \sigma}) : b$ can be constructed naturally in $(\Gamma, \lambda)$ and $\sigma$, in the sense that coalgebra morphisms in $(\Gamma, \lambda)$ preserving $\sigma$ induce natural transformations of diagrams
  \begin{equation}
    \label{eq:ext-equivalence}
    \begin{tikzcd}
      G(\Gamma.\sigma)^\mathcal{I} & G(\Gamma.\sigma) \arrow[r, shift left, "a"] \arrow[l] & G(\Gamma_{/ \sigma}) \arrow[l, shift left, "b"] \arrow[r] & G(\Gamma_{/ \sigma})^\mathcal{I}.
    \end{tikzcd}
  \end{equation}
\end{lemma}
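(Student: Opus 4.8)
The plan is to exhibit both $G(p) : G(\Gamma) \to G(\Gamma.\sigma)$ and $\sigma^* : G(\Gamma) \to G(\Gamma_{/\sigma})$ as weak context extensions of $G(\Gamma)$ by $\sigma$ and then invoke the biuniversal property of Definition \ref{def:weak-ext} to produce the comparison equivalence. That both are weak context extensions is exactly the content of Lemmas \ref{lem:strict-ext-is-weak-ext} and \ref{lem:slice-is-weak-ext}. Since any two weak context extensions of the same data are connected by a contractible groupoid of comparison functors, one obtains an equivalence $a : G(\Gamma.\sigma) \rightleftarrows G(\Gamma_{/\sigma}) : b$ in the coslice $\mathrm{Lcc}_{G(\Gamma) /}$, together with isomorphisms $\mathrm{id} \cong ba$ and $\mathrm{id} \cong ab$; the real work is to make every piece of this data natural in $(\Gamma, \lambda)$ and $\sigma$.

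For $a$ I would follow the construction in the proof of Lemma \ref{lem:strict-ext-is-weak-ext}: strictify the (non-strict) pullback functor to a strict lcc functor $(\sigma^*)^s : \Gamma \to \Gamma_{/\sigma}$ via Lemma \ref{lem:strictification}, transport the diagonal $d$ along the natural isomorphism $\zeta^{\sigma^*}$ to a term $d^s$, and set $a = G(\langle (\sigma^*)^s, d^s \rangle)$ using the universal property of $\Gamma.\sigma$. For $b$ I would follow the proof of Lemma \ref{lem:slice-is-weak-ext}, namely $b = v^* \circ G(p_{/\sigma})$, where $p_{/\sigma}$ is the strict lcc slice functor of Proposition \ref{prop:strict-slicing} and $v^*$ is the canonical (lcc but non-strict) pullback functor along the variable $v : 1 \to p(\sigma)$ in $\Gamma.\sigma$. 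The point of these explicit representatives, rather than invoking the biuniversal property as a black box, is that both are assembled entirely from operations already known to be natural.

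Naturality of $a$ then reduces to three facts: the pullback functor satisfies $G(f_{/\sigma}) \circ \sigma^* = \sigma'^* \circ G(f)$ by the naturality clause of Proposition \ref{prop:strict-slicing}; strictification commutes with pre- and postcomposition by Lemma \ref{lem:strictification}, so that applying the retraction to this equality yields $(\sigma'^*)^s \circ f = f_{/\sigma} \circ (\sigma^*)^s$; and the comparison maps $\langle -, - \rangle$ are natural by the universal property of context extension. Checking the two strict lcc functors $\langle (\sigma'^*)^s, d'^s \rangle \circ (f.\sigma)$ and $f_{/\sigma} \circ \langle (\sigma^*)^s, d^s \rangle$ agree after precomposing with $p$ and on $v$ (the latter amounting to $f_{/\sigma}(d^s) = d'^s$, which follows from naturality of $\zeta$ and of the diagonal) gives $G(f_{/\sigma}) \circ a = a' \circ G(f.\sigma)$ on the nose. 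Naturality of $b$ is a short diagram chase combining functoriality of slicing, $(f.\sigma)_{/p(\sigma)} \circ p_{/\sigma} = p'_{/\sigma'} \circ f_{/\sigma}$, with the left square of Proposition \ref{prop:strict-slicing} applied to $v$, namely $(f.\sigma) \circ v^* = v'^* \circ (f.\sigma)_{/p(\sigma)}$; these combine to give $G(f.\sigma) \circ b = b' \circ G(f_{/\sigma})$, again as a strict equality. Finally, the isomorphisms $\mathrm{id} \cong ba$ and $\mathrm{id} \cong ab$ come from the uniqueness half of the biuniversal property: since $a$ and $b$ are compatible with $v \mapsto d$ and $d \mapsto v$, the composite $ba$ is an endomorphism of the weak context extension $(G(p), v)$ compatible with $v$, hence uniquely isomorphic to the identity by Remark \ref{rem:discrete-weak-ext-mapping}, and dually for $ab$; these are the maps into the powers $(-)^\mathcal{I}$ in \eqref{eq:ext-equivalence}.

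The main obstacle I anticipate is the naturality of the two witnessing isomorphisms rather than of $a$ and $b$ themselves. Because they are produced by a uniqueness statement, the cleanest route is to observe that transporting the canonical isomorphism for $(\Gamma, \lambda, \sigma)$ along $G(f.\sigma)$ yields another isomorphism satisfying the defining compatibility with $v$, so that it must coincide with the canonical isomorphism for $(\Gamma', \lambda', \sigma')$; this coincidence is precisely the commutativity of the two outer squares of the induced morphism of diagrams \eqref{eq:ext-equivalence}. A secondary nuisance is the bookkeeping around the non-strict pullback functor $v^*$ in $b$, which must be kept canonical in the sense of Proposition \ref{prop:strict-slicing} throughout, so that the diagram chase for naturality of $b$ closes up strictly rather than merely up to a further isomorphism.
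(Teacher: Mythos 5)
Your proposal is correct and follows essentially the same route as the paper: both proofs take the explicit representatives $a = \langle (\sigma^*)^s, d^s \rangle$ (via Lemma \ref{lem:strictification}) and $b = v^* \circ p_{/\sigma}$ (via Proposition \ref{prop:strict-slicing}), establish strict naturality of these from the naturality of the canonical slicing and strictification operations, and then obtain the two witnessing isomorphisms and their naturality from the uniqueness guaranteed by Remark \ref{rem:discrete-weak-ext-mapping}. The only differences are cosmetic (the paper routes $b$ through the identification $G(\Gamma.\sigma_{/1}) \cong G(\Gamma.\sigma)$ and names the comparison isomorphisms $\alpha$, $\beta$ explicitly).
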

\begin{proof}
  It follows immediately from Lemmas \ref{lem:strict-ext-is-weak-ext} and \ref{lem:slice-is-weak-ext} that $G(\Gamma.\sigma)$ and $G(\Gamma_{/ \sigma})$ are equivalent under $G(\Gamma)$.
  However, a priori the corresponding diagrams \eqref{eq:ext-equivalence} can only be assumed to vary pseudonaturally in $(\Gamma, \lambda)$ and $\sigma$, meaning that for example the square
  \begin{equation}
    \label{eq:strict-ext-to-slice}
    \begin{tikzcd}
      G(\Gamma.\sigma) \arrow[r] \arrow[d] & G(\Gamma_{/ \sigma}) \arrow[d] \\
      G(\Delta.f(\sigma)) \arrow[r] & G(\Delta_{/ f(\sigma)}) \\
    \end{tikzcd}
  \end{equation}
  induced by a coalgebra morphism $f : (\Gamma, \lambda) \rightarrow (\Delta, \mu)$ would only commute up to isomorphism.

  The issue is that Definition \ref{def:weak-ext} only requires that certain mapping groupoids are contractible to a point, but the choice of point is not uniquely determined.
  To obtain a square \eqref{eq:strict-ext-to-slice} that commutes up to equality, we have to explicitly construct a map $G(\Gamma.\sigma) \rightarrow G(\Gamma_{/ \sigma})$ (i.e.\@ point of the contractible mapping groupoid) and show that this choice is strictly natural.

  The map $G(\Gamma.\sigma) \rightarrow G(\Gamma_{/ \sigma})$ over $G(\Gamma)$ is determined up to unique isomorphism by compatibility with the (canonical) pullback functor $\sigma^* : G(\Gamma) \rightarrow G(\Gamma_{/ \sigma})$ and the diagonal $d : \mathrm{id}_\sigma \rightarrow \sigma^*(\sigma)$.
  Recall from the proof of Lemma \ref{lem:strict-ext-is-weak-ext} that $a = \langle (\sigma^*)^s, d^s \rangle : G(\Gamma.\sigma) \rightarrow G(\Gamma_{/ \sigma})$ and $\alpha = \zeta^{\sigma^*} : G(a) \circ G(p) \cong \sigma^*$ is a valid choice.
  $d$ is stable under strict lcc functors, hence by Lemmas \ref{prop:strict-slicing} and \ref{lem:strictification}, $a$ and $\alpha$ are natural in $FG$-coalgebra morphisms.

  As in the proof of Lemma \ref{lem:slice-is-weak-ext}, the map in the other direction can be constructed as composite
  \begin{equation}
    \begin{tikzcd}
      b : G(\Gamma_{/ \sigma}) \arrow[r, "p_{/ \sigma}"] & G(\Gamma.\sigma_{/ p(\sigma)}) \arrow[r, "v^*"] & G(\Gamma.\sigma_{/ 1}) \arrow[r, "\cong"] & G(\Gamma.\sigma),
    \end{tikzcd}
  \end{equation}
  where $v^*$ is the canonical pullback along the variable $v$, and the components of the natural isomorphism $\beta : b \sigma^* \cong G(p)$ are the unique isomorphisms relating pullback squares
  \begin{mathpar}
    \begin{tikzcd}
      b(\sigma^*(\tau)) \arrow[r] \arrow[d] & p(\sigma) \times p(\tau) \arrow[d, "\mathrm{pr}_1"] \\
      1 \arrow[r, "v"] & p(\sigma)
    \end{tikzcd}
    \and
    \begin{tikzcd}
      p(\tau) \arrow[r, "{\langle v, \mathrm{id} \rangle}"] \arrow[d] & p(\sigma) \times p(\tau) \arrow[d, "\mathrm{pr}_1"] \\
      1 \arrow[r, "v"] & p(\sigma).
    \end{tikzcd}
  \end{mathpar}
  All data involved in the construction are natural in $\Gamma$ by Proposition \ref{prop:strict-slicing}, hence so are $b$ and $\beta$.

  By Remark \ref{rem:discrete-weak-ext-mapping}, the natural isomorphisms $(b, \beta) \circ (a, \alpha) \cong \mathrm{id}$ and $\mathrm{id} \cong (b, \beta) \circ (a, \alpha)$ over $G(\Gamma)$ are uniquely determined given their domain and codomain.
  Their naturality in $(\Gamma, \lambda)$ and $\sigma$ thus follows from that of $(a, \alpha)$ and $(b, \beta)$.
\end{proof}

\begin{lemma}
  \label{lem:substitution-vs-pullback}
  Let $\lambda : \Gamma \rightarrow F(G(\Gamma))$ be an $FG$-coalgebra, let $\sigma, \tau$ be types in context $\Gamma$ and let $\Gamma.\tau \vdash t : p_\tau(\sigma)$ be a term.
  Let $\bar t : \tau \rightarrow \sigma$ be the morphism in $\Gamma$ that corresponds to $t$ under the isomorphism
  \begin{equation}
    \mathrm{Hom}_{\Gamma.\tau}(1, p_\tau(\sigma)) \cong \mathrm{Hom}_{\Gamma_{/ \tau}}(\mathrm{id}_\tau, \tau^*(\sigma)) \cong \mathrm{Hom}_\Gamma(\tau, \sigma)
  \end{equation}
  induced by the equivalence of Lemma \ref{lem:extension-vs-slice} and the adjunction $\Sigma_\tau \dashv \tau^*$.
  Then the square
  \begin{equation}
    \begin{tikzcd}
      G(\Gamma.\sigma) \arrow[r] \arrow[d, "{G(\langle p_\tau, s \rangle)}"'] & G(\Gamma_{/ \sigma}) \arrow[d, "\bar t^*"] \\
      G(\Gamma.\tau) \arrow[r] & G(\Gamma_{/ \tau}) \\
    \end{tikzcd}
  \end{equation}
  in $\mathrm{Lcc}_{G(\Gamma)/ }$ commutes up to a unique natural isomorphism that is compatible with $FG$-coalgebra morphisms in $(\Gamma, \lambda)$.
\end{lemma}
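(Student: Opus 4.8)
The plan is to exploit the biuniversal property of weak context extensions. By Lemma \ref{lem:strict-ext-is-weak-ext}, the functor $G(p_\sigma) : G(\Gamma) \rightarrow G(\Gamma.\sigma)$ together with the variable $v_\sigma : 1 \rightarrow p_\sigma(\sigma)$ is a weak context extension of $G(\Gamma)$ by $\sigma$. By Remark \ref{rem:discrete-weak-ext-mapping}, two lcc functors out of $G(\Gamma.\sigma)$ lying over $G(\Gamma)$ are uniquely isomorphic under $G(\Gamma)$ exactly when they send $v_\sigma$ to the same classifying term. The whole statement thus reduces to exhibiting both paths around the square as lcc functors over $G(\Gamma)$ and comparing the image of $v_\sigma$ along each.

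First I would record that both composites lie over $G(\Gamma)$. The left-then-bottom composite $a_\tau \circ G(\langle p_\tau, t \rangle)$, with $a_\tau$ the equivalence of Lemma \ref{lem:extension-vs-slice}, is over $G(\Gamma)$ because the cwf identity $\langle p_\tau, t \rangle \circ p_\sigma = p_\tau$ yields $a_\tau \circ G(\langle p_\tau, t \rangle) \circ G(p_\sigma) = a_\tau \circ G(p_\tau) \cong \tau^*$. The top-then-right composite $\bar t^* \circ a_\sigma$ is over $G(\Gamma)$ because $a_\sigma$ is (Lemma \ref{lem:extension-vs-slice}) and $\bar t^* \circ \sigma^* \cong \tau^*$, pulling back first along $\sigma \rightarrow 1$ and then along $\bar t : \tau \rightarrow \sigma$ being pullback along $\tau \rightarrow 1$ (Proposition \ref{prop:strict-slicing}).

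Next I compute the two classifying terms in $G(\Gamma_{/\tau})$, where the reference functor is $\tau^*$ and the terminal object is $\mathrm{id}_\tau$. Along the left-bottom path the defining equation of context extension gives $\langle p_\tau, t \rangle(v_\sigma) = t$, and $a_\tau$ carries $t$ to the term $\hat t : \mathrm{id}_\tau \rightarrow \tau^*(\sigma)$ which, by the very definition of $\bar t$ in the statement, is the image of $t$ under the first isomorphism of the displayed chain and hence transposes to $\bar t$ under $\Sigma_\tau \dashv \tau^*$. Along the top-right path $a_\sigma$ carries $v_\sigma$ to the diagonal $d_\sigma : \mathrm{id}_\sigma \rightarrow \sigma^*(\sigma)$, the classifying term of the slice weak extension of Lemma \ref{lem:slice-is-weak-ext}, so the classifying term of $\bar t^* \circ a_\sigma$ is $\bar t^*(d_\sigma)$. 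The heart of the argument is the identity $\bar t^*(d_\sigma) = \hat t$: pulling the diagonal $\sigma \rightarrow \sigma \times \sigma$ back along $\bar t$ produces the graph $\langle \bar t, \mathrm{id}_\tau \rangle : \tau \rightarrow \sigma \times \tau$, i.e.\ the section of $\tau^*(\sigma)$ whose transpose under $\Sigma_\tau \dashv \tau^*$ is again $\bar t$. Since both classifying terms transpose to $\bar t$ they coincide, and Remark \ref{rem:discrete-weak-ext-mapping} supplies the unique filling isomorphism.

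Finally, compatibility with $FG$-coalgebra morphisms comes for free: the filling isomorphism is uniquely determined by its source and target by Remark \ref{rem:discrete-weak-ext-mapping}, while every ingredient --- the equivalences $a_\sigma, a_\tau$ (Lemma \ref{lem:extension-vs-slice}), the pullback functor $\bar t^*$ (Proposition \ref{prop:strict-slicing}), the substitution $\langle p_\tau, t \rangle$ and the variable (Proposition \ref{prop:coa-slcc-exts}), and the diagonal --- is natural in $(\Gamma, \lambda)$. Naturality of the unique isomorphism then follows by the same reasoning used at the end of Lemmas \ref{lem:coalgebra-vs-eta} and \ref{lem:extension-vs-slice}. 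I expect the main obstacle to be the identity $\bar t^*(d_\sigma) = \hat t$, the concrete ``substitution is pullback'' step, which requires carefully matching the coherence isomorphism $\bar t^* \sigma^* \cong \tau^*$ against the adjunction transposes $\Sigma_\sigma \dashv \sigma^*$ and $\Sigma_\tau \dashv \tau^*$.
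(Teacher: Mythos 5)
Your proposal is correct and follows essentially the same route as the paper: the paper's (one-sentence) proof likewise observes that both legs of the square are lcc functors under $G(\Gamma)$ out of the weak context extension $G(\Gamma.\sigma)$, that $\bar t^*$ carries the diagonal $d_\sigma$ to the graph of $\bar t$ (i.e.\ the classifying term $\hat t$) modulo the canonical isomorphism $\bar t^* \circ \sigma^* \cong \tau^*$, and then invokes Lemma \ref{lem:strict-ext-is-weak-ext} (with Remark \ref{rem:discrete-weak-ext-mapping}) for the unique filling isomorphism and its naturality. Your write-up simply makes explicit the classifying-term computation and the uniqueness argument for coalgebra-compatibility that the paper leaves implicit.
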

\begin{proof}
  $\bar t^*$ maps the diagonal of $\sigma$ to the diagonal of $\tau$ up to the canonical isomorphism $\bar t^* \circ \sigma^* \cong \tau^*$, hence Lemma \ref{lem:strict-ext-is-weak-ext} applies.
\end{proof}

\begin{theorem}
  \label{th:lcc-supports-types}
  The cwf $\operatorname{Coa} \mathrm{sLcc}$ is a model of dependent type theory with finite product, extensional equality, dependent product and dependent sum types.
\end{theorem}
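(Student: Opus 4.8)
The plan is to leverage the work of Section \ref{sec:algebraically-cofibrant}: Proposition \ref{prop:coa-slcc-exts} already equips $\operatorname{Coa} \mathrm{sLcc}$ with an empty context and context extensions, and the forgetful functor to $\mathrm{sLcc}$ preserves both. Finite product and extensional equality types then require no new ideas. They were built in Proposition \ref{prop:slcc-model} from the canonical terminal object and canonical pullbacks of strict lcc categories; since every substitution in the cwf $\operatorname{Coa} \mathrm{sLcc}$ is a coalgebra morphism and hence a strict lcc functor preserving exactly this canonical structure, these type and term formers remain substitution stable after restricting along $\operatorname{Coa} \mathrm{sLcc} \to \mathrm{sLcc}$. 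All the substantive work therefore concerns dependent product and dependent sum types, which is precisely where the equivalence between context extensions and slices established in this section is needed.

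For the formation rules I would set, for $\Gamma \vdash \sigma$ and $\Gamma.\sigma \vdash \tau$,
\[
  \mathbf{\Pi}_\sigma \, \tau := \Pi_\sigma(a(\tau)), \qquad \mathbf{\Sigma}_\sigma \, \tau := \Sigma_\sigma(a(\tau)),
\]
where $a : G(\Gamma.\sigma) \to G(\Gamma_{/\sigma})$ is the equivalence of Lemma \ref{lem:extension-vs-slice}, which by the proof of Lemma \ref{lem:strict-ext-is-weak-ext} is the underlying functor of a \emph{strict} lcc functor $\langle (\sigma^*)^s, d^s \rangle$, and $\Pi_\sigma, \Sigma_\sigma : G(\Gamma_{/\sigma}) \to G(\Gamma)$ are the canonical adjoints of Proposition \ref{prop:strict-slicing} (taken along $\sigma \to 1$). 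Substitution stability of these type formers reduces to two strict equalities already available. For a coalgebra morphism $f$ with weakening $f^+ : \Gamma.\sigma \to \Delta.f(\sigma)$, the naturality of the equivalence in Lemma \ref{lem:extension-vs-slice} gives $f_{/\sigma} \circ a = a \circ f^+$ on the nose, while Proposition \ref{prop:strict-slicing} gives $f \circ \Pi_\sigma = \Pi_{f(\sigma)} \circ f_{/\sigma}$ and likewise for $\Sigma$. Composing these yields $f(\mathbf{\Pi}_\sigma \tau) = \mathbf{\Pi}_{f(\sigma)}(f^+ \tau)$, the required substitution law for $\mathbf{\Pi}$-types (and symmetrically for $\mathbf{\Sigma}$).

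The introduction and elimination rules come from the adjunctions $\Sigma_\sigma \dashv \sigma^* \dashv \Pi_\sigma$. Because $a$ is a strict lcc functor it carries the canonical terminal object of $\Gamma.\sigma$ to the canonical terminal object $\mathrm{id}_\sigma$ of $\Gamma_{/\sigma}$, which the adjunction relates to $1 \in \Gamma$ via the canonical isomorphism $\sigma^*(1) \cong \mathrm{id}_\sigma$. Hence a term $\Gamma.\sigma \vdash u : \tau$, i.e.\@ a map $1 \to \tau$, is taken by $a$ to a map $\sigma^*(1) \to a(\tau)$ whose transpose along $\sigma^* \dashv \Pi_\sigma$ is a term $1 \to \Pi_\sigma(a(\tau))$; since $a$ is an equivalence with chosen inverse $b$, this gives a bijection $\mathrm{Tm}(\Gamma.\sigma, \tau) \cong \mathrm{Tm}(\Gamma, \mathbf{\Pi}_\sigma \tau)$, supplying abstraction, application and the $\beta$/$\eta$ equations. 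Dually, the unit and counit of $\Sigma_\sigma \dashv \sigma^*$ give pairing and projections for $\mathbf{\Sigma}$-types. The remaining ingredient is instantiation of a term into a dependent type: substituting a term for the variable is a reindexing substitution of the shape $\langle p_\tau, s \rangle$, and Lemma \ref{lem:substitution-vs-pullback} identifies its action with pullback $\bar t^*$ along the corresponding slice, so that the substituted type agrees on both sides.

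The main obstacle is coherence bookkeeping at the level of terms rather than any single difficult construction, since the hard biuniversality has already been absorbed into Lemmas \ref{lem:extension-vs-slice} and \ref{lem:substitution-vs-pullback}. One must check that the transpose bijections, the canonical identifications relating $a(1)$, $\mathrm{id}_\sigma$ and $\sigma^*(1)$, and the square of Lemma \ref{lem:substitution-vs-pullback} assemble so that application, pairing and projection are stable under arbitrary coalgebra morphisms and satisfy the computation rules as strict equalities. The delicate point is that Lemma \ref{lem:substitution-vs-pullback} provides commutativity only up to a \emph{unique} natural isomorphism; the strict equalities demanded by the cwf laws must then be recovered from that uniqueness together with the strict naturality of $a$ and $b$ furnished by Lemma \ref{lem:extension-vs-slice}, exactly as strictness was extracted from biuniversal properties throughout Section \ref{sec:algebraically-cofibrant}. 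Once these compatibilities are verified, the formation, introduction, elimination and computation rules for all four type formers hold and are substitution stable, completing the proof.
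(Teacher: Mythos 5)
Your proposal is correct and follows essentially the same route as the paper's proof: context extensions from Proposition \ref{prop:coa-slcc-exts}, finite products and equality inherited from $\mathrm{sLcc}$, the type formers $\mathbf{\Pi}_\sigma\,\tau = \Pi_\sigma(a(\tau))$ and $\mathbf{\Sigma}_\sigma\,\tau = \Sigma_\sigma(a(\tau))$ via the comparison functor $a$ of Lemma \ref{lem:extension-vs-slice}, term formers from the adjunctions $\Sigma_\sigma \dashv \sigma^* \dashv \Pi_\sigma$ together with Lemma \ref{lem:substitution-vs-pullback} for pairing, and substitution stability from the strict naturality statements of Proposition \ref{prop:strict-slicing} and Lemmas \ref{lem:extension-vs-slice} and \ref{lem:substitution-vs-pullback}. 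The only cosmetic difference is that the paper spells out the pairing and projection maps for $\mathbf{\Sigma}$-types in an explicit diagram where you invoke the unit and counit of $\Sigma_\sigma \dashv \sigma^*$, but the content is the same.
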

\begin{proof}
  $\operatorname{Coa} \mathrm{sLcc}$ has an empty context and context extensions by Proposition \ref{prop:coa-slcc-exts}.
  Finite product and equality types are interpreted as in $\mathrm{sLcc}$ (see Proposition \ref{prop:slcc-model}).

  Let $\Gamma \vdash \sigma$ and $\Gamma.\sigma \vdash \tau$.
  Denote by $a : \Gamma.\sigma \rightarrow \Gamma_{/ \sigma}$ the functor that is part of the equivalence established in Lemma \ref{lem:extension-vs-slice}.
  Then $\Gamma \vdash \mathbf{\Sigma}_\sigma \, \tau$ respectively $\Gamma \vdash \mathbf{\Pi}_\sigma \, \tau$ are defined by application of the functors
  \begin{equation}
    \begin{tikzcd}
      \Gamma.\sigma \arrow[r, "a"] & \Gamma_{/ \sigma} \arrow[r, shift left, "\Sigma_\sigma"] \arrow[r, shift right, "\Pi_\sigma"'] & \Gamma
    \end{tikzcd}
  \end{equation}
  to $\tau$.

  $a$ being an equivalence and the adjunction $\sigma^* \dashv \Pi_\sigma$ establish an isomorphism
  \begin{equation}
    \mathrm{Hom}_{\Gamma.\sigma}(1, \tau) \cong \mathrm{Hom}_{\Gamma_{/ \sigma}}(\sigma^*(1), a(\tau)) \cong \mathrm{Hom}_{\Gamma}(1, \Pi_\sigma(a(\tau)))
  \end{equation}
  by which we define lambda abstraction $\Gamma \vdash \lambda(t) : \mathbf{\Pi}_\sigma \, \tau$ for some term $\Gamma.\sigma \vdash t : \tau$ and the inverse to $\lambda$ (i.e.\@ application of $p_\sigma(u)$ to the variable $\Gamma.\sigma \vdash v : \sigma$ for some term $\Gamma \vdash u : \mathbf{\Pi}_\sigma \, \tau$).

  Now let $\Gamma \vdash s : \sigma$ and $\Gamma \vdash t : \langle \mathrm{id}_\Gamma, s \rangle(\tau)$.
  The pair term $u = (s, t)$ of type $\Gamma \vdash \mathbf{\Sigma}_\sigma \, \tau$ is defined by the diagram
  \begin{equation}
    \begin{tikzcd}
      \langle \mathrm{id}_\Gamma, s \rangle(\tau) \arrow[r, "\cong"] & s^*(a(\tau)) \arrow[r] \arrow[d] & \Sigma_\sigma(a(\tau)) \arrow[d, "a(\tau)"] \\
      & 1 \arrow[ul, "t"] \arrow[r, "s"] \arrow[ur, "u"] & \sigma.
    \end{tikzcd}
  \end{equation}
  Here the isomorphism $\langle \mathrm{id}, s \rangle(\tau) \cong s^*(a(\tau))$ is a component of the natural isomorphism $\langle \mathrm{id}, s \rangle \cong s^* \circ a$ constructed in Lemma \ref{lem:substitution-vs-pullback}, instantiated for $\tau = 1$.
  Given just $u$ we recover $s$ by composition with $a(\tau)$, and then $t$ as composition
  \begin{equation}
    \begin{tikzcd}[column sep=large]
      1 \arrow[r, "{\langle \mathrm{id}, u \rangle}"] & s^*(a(\tau)) \arrow[r, "\cong"] & \langle \mathrm{id}_\Gamma, s \rangle(\tau).
    \end{tikzcd}
  \end{equation}
  These constructions establish an isomorphism of terms $s$ and $t$ with terms $u$, so the $\beta$ and $\eta$ laws hold.

  The functors $a, \sigma^*, \Sigma_\sigma, \Pi_\sigma$ and the involved adjunctions are preserved by $FG$-coalgebra morphisms (Proposition \ref{prop:strict-slicing}, Lemmas \ref{lem:extension-vs-slice} and \ref{lem:substitution-vs-pullback} ), so our type theoretic structure is stable under substitution.
\end{proof}

\section{Cwf structure on individual lcc categories}
\label{sec:applications}

In this section we show that the covariant cwf structure on $\operatorname{Coa} \mathrm{sLcc}$ that we established in Theorem \ref{th:lcc-supports-types} can be used as a coherence method to rectify Seely's interpretation in a given lcc category $\mathcal{C}$.

\begin{lemma}
  \label{lem:cats-of-weak-exts}
  Let $\lambda : \Gamma \rightarrow F(G(\Gamma))$ be an $FG$-coalgebra.
  Then the following categories are equivalent:
  \begin{enumerate}[label={(\arabic*)}]
    \item
      \label{itm:locally-discrete-gamma-op}
      $\Gamma^\mathrm{op}$;
    \item
      \label{itm:slice-2-category}
      the category of isomorphism classes of morphisms in the restriction of the higher coslice category $\mathrm{Lcc}_{G(\Gamma) /}$ to slice categories $\sigma^* : G(\Gamma) \rightarrow G(\Gamma_{/ \sigma})$;
    \item
      \label{itm:ext-2-category}
      the category of isomorphism classes of morphisms in the restriction of the higher coslice category $\mathrm{Lcc}_{G(\Gamma) /}$ to context extensions $G(p_\sigma) : G(\Gamma) \rightarrow G(\Gamma.\sigma)$;
    \item
      \label{itm:ext-1-category}
      the full subcategory of the 1-categorical coslice category $(\operatorname{Coa} \mathrm{Lcc})_{(\Gamma, \lambda) /}$ given by the context extensions $p_\sigma : (\Gamma, \lambda) \rightarrow (\Gamma.\sigma, \lambda.\sigma)$.
  \end{enumerate}
\end{lemma}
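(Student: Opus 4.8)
The plan is to show that all four categories share the same objects---indexed by $\operatorname{Ob} \Gamma$---and have hom-sets naturally in bijection with $\mathrm{Hom}_\Gamma(\tau, \sigma)$ for a morphism from the datum indexed by $\sigma$ to the one indexed by $\tau$, and to organize the comparison along the tree $(2) \simeq (3) \simeq (4) \simeq (1)$, using transitivity for the rest. Throughout I read (2) and (3) as the categories obtained by applying $\pi_0$ to the hom-groupoids of the full sub-$\mathrm{Gpd}$-categories of $\mathrm{Lcc}_{G(\Gamma) /}$ spanned, respectively, by the slices $\sigma^* : G(\Gamma) \rightarrow G(\Gamma_{/ \sigma})$ and the extensions $G(p_\sigma) : G(\Gamma) \rightarrow G(\Gamma.\sigma)$.

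First I would dispatch $(2) \simeq (3)$. By Lemma \ref{lem:extension-vs-slice}, for every $\sigma$ the objects $\sigma^*$ and $G(p_\sigma)$ are equivalent in the higher coslice $\mathrm{Lcc}_{G(\Gamma) /}$ via the chosen mutually inverse $a, b$. Hence the two full sub-$\mathrm{Gpd}$-categories above are biequivalent (each object of one is $\mathrm{Lcc}_{G(\Gamma)/}$-equivalent to a corresponding object of the other), and conjugation by $a, b$ gives, for each pair $(\sigma, \tau)$, an equivalence of hom-groupoids that is coherent with composition. Passing to $\pi_0$ turns this into an equivalence of the $1$-categories (2) and (3). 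Note that only the objectwise equivalence is used here, not its naturality in $\Gamma$.

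Next I would identify the hom-sets of (3), (4) and (1). For (3), the object $G(\Gamma.\sigma)$ is a weak context extension of $G(\Gamma)$ by $\sigma$ (Lemma \ref{lem:strict-ext-is-weak-ext}), so by Remark \ref{rem:discrete-weak-ext-mapping} the groupoid $\mathrm{Lcc}_{G(\Gamma)/}(G(p_\sigma), G(p_\tau))$ is discrete with $\pi_0$ in bijection with terms $1 \rightarrow p_\tau(\sigma)$ in $G(\Gamma.\tau)$, i.e.\@ with $\mathrm{Tm}(\Gamma.\tau, p_\tau(\sigma))$. For (4), the universal property of $\Gamma.\sigma$ in $\operatorname{Coa} \mathrm{sLcc}$ (Proposition \ref{prop:coa-slcc-exts}) identifies a strict coalgebra morphism $g$ over $(\Gamma, \lambda)$ with $g \circ p_\sigma = p_\tau$ with its term datum $v_\sigma[g] \in \mathrm{Tm}(\Gamma.\tau, p_\tau(\sigma))$ via $g = \langle p_\tau, v_\sigma[g] \rangle$. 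The forgetful functor $\operatorname{Coa} \mathrm{sLcc} \rightarrow \mathrm{Lcc}$ induces a functor $(4) \rightarrow (3)$ sending $g$ to the strictly commuting triangle $(G(g), \mathrm{id})$, whose extracted term is again $v_\sigma[g]$; so under the two identifications with $\mathrm{Tm}(\Gamma.\tau, p_\tau(\sigma))$ this functor is the identity on hom-sets and a bijection on objects, giving $(3) \simeq (4)$. Finally, the chain $\mathrm{Tm}(\Gamma.\tau, p_\tau(\sigma)) \cong \mathrm{Hom}_{\Gamma_{/ \tau}}(\mathrm{id}_\tau, \tau^*(\sigma)) \cong \mathrm{Hom}_\Gamma(\tau, \sigma)$ from Lemma \ref{lem:substitution-vs-pullback} matches these with $\mathrm{Hom}_{\Gamma^\mathrm{op}}(\sigma, \tau)$, yielding $(1) \simeq (4)$ on objects and hom-sets.

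The remaining and main work is checking that these hom-set bijections respect composition, and this is where the apparatus is genuinely needed. For $(3) \simeq (4)$ the delicate points are fullness and faithfulness as a comparison of the bicategorical coslice with the strict one: given an arbitrary $(h, \phi)$ I would extract the term $w = \phi_\sigma \circ h(v_\sigma) : 1 \rightarrow p_\tau(\sigma)$, set $g = \langle p_\tau, w \rangle$ by the strict universal property, and invoke Remark \ref{rem:discrete-weak-ext-mapping} to see $(G(g), \mathrm{id}) \cong (h, \phi)$ (both correspond to $w$), while uniqueness in the universal property forces two strict $g, g'$ with isomorphic images to coincide. For $(1) \simeq (4)$ the composite of $\sigma \xrightarrow{a} \tau \xrightarrow{b} \rho$ in $\Gamma^\mathrm{op}$ must be matched with $\langle p_\rho, w_b \rangle \circ \langle p_\tau, w_a \rangle = \langle p_\rho, w_a[\langle p_\rho, w_b \rangle] \rangle$, so the crux is naturality of the term/morphism correspondence with respect to the substitution $\langle p_\rho, w_b \rangle$; this is exactly Lemma \ref{lem:substitution-vs-pullback}, which relates such a substitution to the pullback $b^*$ and thereby sends $w_a[\langle p_\rho, w_b \rangle]$ to $a \circ b$. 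I expect this compositional coherence---reconciling the weak triangles underlying (2)--(3) with the strict substitutions of (4) and with composition in $\Gamma$---to be the main obstacle, the isolated hom-set bijections being comparatively routine.
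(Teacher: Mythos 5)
Your proposal is correct and follows essentially the same route as the paper's proof: the equivalence of \ref{itm:slice-2-category} and \ref{itm:ext-2-category} because both are weak context extensions (Lemmas \ref{lem:strict-ext-is-weak-ext} and \ref{lem:slice-is-weak-ext}), the equivalence of \ref{itm:ext-2-category} and \ref{itm:ext-1-category} by strictification (Lemma \ref{lem:strictification}) together with the strict universal property of $\Gamma.\sigma$, and the identification of hom-sets with $\mathrm{Hom}_\Gamma(\tau, \sigma)$ via the adjunction $\Sigma_\tau \dashv \tau^*$. The only real difference is where $\Gamma^{\mathrm{op}}$ is attached: the paper maps $s : \tau \rightarrow \sigma$ directly to the class of $s^*$ in \ref{itm:slice-2-category} and reads off fullness and faithfulness from where a functor over $G(\Gamma)$ sends the diagonal $d$, whereas you attach $\Gamma^{\mathrm{op}}$ to \ref{itm:ext-1-category} via terms and verify compositionality with Lemma \ref{lem:substitution-vs-pullback}; both are valid.
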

\begin{proof}
  As noted in Remark \ref{rem:discrete-weak-ext-mapping}, the higher categories in \ref{itm:slice-2-category} and \ref{itm:ext-2-category} are already locally equivalent to discrete groupoids and hence biequivalent to their categories of isomorphism classes.

  The functor from \ref{itm:locally-discrete-gamma-op} to \ref{itm:slice-2-category} is given by assigning to a morphism $s : \tau \rightarrow \sigma$ in $\Gamma$ the isomorphism class of the pullback functor $s^* : G(\Gamma_{/ \sigma}) \rightarrow G(\Gamma_{/ \tau})$.
  The isomorphism class of an lcc functor $f : G(\Gamma_{/ \sigma}) \rightarrow G(\Gamma_{/ \tau})$ over $G(\Gamma)$ is uniquely determined by the morphism
  \begin{equation}
    \begin{tikzcd}
      \mathrm{id}_\tau \arrow[r, "\cong"] & f(\mathrm{id}_\sigma) \arrow[r, "f(d)"] & f(\sigma^*(\sigma)) \arrow[r, "\cong"] & \tau^*(\sigma),
    \end{tikzcd}
  \end{equation}
  which in turn corresponds to a morphism $s : \tau = \Sigma_\tau \mathrm{id}_\tau \rightarrow \sigma$, and then $f \cong s^*$.

  The categories \ref{itm:slice-2-category} and \ref{itm:ext-2-category} are equivalent because they are both categories of weak context extensions (Lemmas \ref{lem:strict-ext-is-weak-ext} and \ref{lem:slice-is-weak-ext}).
  Finally, the inclusion of \ref{itm:ext-1-category} into \ref{itm:ext-2-category} is an equivalence by the Lemma \ref{lem:strictification}.
  Note that every strict lcc functor $\Gamma.\sigma \rightarrow \Gamma.\tau$ commuting (up to equality) with the projections $p_\sigma$ and $p_\tau$ is compatible with the coalgebra structures of $\lambda.\sigma : \Gamma \rightarrow \Gamma.\sigma$ and $\lambda.\tau : \Gamma \rightarrow \Gamma.\tau$.
\end{proof}

\begin{definition}
  Let $\mathcal{C}$ be a covariant cwf and let $\Gamma$ be a context of $ \mathcal{C}$.
  Then the \emph{coslice covariant cwf} $\mathcal{C}_{\Gamma /}$ has as underlying category the (1-categorical) coslice category under $\Gamma$, and its types and terms are given by the composite functor $\mathcal{C}_{\Gamma /} \xrightarrow{\mathrm{cod}} \mathcal{C} \rightarrow \mathrm{Fam}$.
\end{definition}

\begin{lemma}
  \label{lem:coslice-cwf}
  Let $\mathcal{C}$ be a covariant cwf and let $\Gamma$ be a context of $\mathcal{C}$.
  Then the coslice covariant cwf $\mathcal{C}_{\Gamma /}$ has an initial context.
  If $\mathcal{C}$ has context extensions, then $\mathcal{C}_{\Gamma /}$ has context extensions, and they are preserved by $\mathrm{cod} : \mathcal{C}_{\Gamma /} \rightarrow \mathcal{C}$.
  If $\mathcal{C}$ supports any of finite product, extensional equality, dependent product or dependent sum types, then so does $\mathcal{C}_{\Gamma /}$, and they are preserved by $\mathrm{cod} : \mathcal{C}_{\Gamma /} \rightarrow \mathcal{C}$
  \qed
\end{lemma}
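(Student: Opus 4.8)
The plan is to exploit the fact that the codomain functor $\mathrm{cod} : \mathcal{C}_{\Gamma /} \to \mathcal{C}$ is, by the very definition of the coslice covariant cwf, a strict morphism of covariant cwfs: the family $(\mathrm{Ty}, \mathrm{Tm})$ on $\mathcal{C}_{\Gamma /}$ is defined as $(\mathrm{Ty}, \mathrm{Tm})_{\mathcal{C}} \circ \mathrm{cod}$, so for an object $(A, a : \Gamma \to A)$ one has $\mathrm{Ty}(A,a) = \mathrm{Ty}(A)$ and $\mathrm{Tm}((A,a), \sigma) = \mathrm{Tm}(A, \sigma)$ \emph{on the nose}, and the action of a coslice morphism on types and terms is that of its underlying $\mathcal{C}$-morphism. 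In particular substitution in $\mathcal{C}_{\Gamma /}$ coincides with substitution in $\mathcal{C}$. All three assertions will follow from this observation together with the elementary fact that morphisms out of a coslice object correspond bijectively to $\mathcal{C}$-morphisms out of its codomain commuting with the structure maps.

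For the initial context no hypothesis on $\mathcal{C}$ is needed: the coslice category under $\Gamma$ has initial object $\mathrm{id}_\Gamma : \Gamma \to \Gamma$, the unique arrow to $(A, a)$ being $a$ itself. Since an empty context in a covariant cwf is exactly an initial object of the underlying category, $(\Gamma, \mathrm{id}_\Gamma)$ is the initial context of $\mathcal{C}_{\Gamma /}$.

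For context extensions I would compute the extension in $\mathcal{C}$: given $(A, a)$ and a type $\sigma \in \mathrm{Ty}(A,a) = \mathrm{Ty}(A)$, let $p : A \to A.\sigma$ and $A.\sigma \vdash v : p(\sigma)$ be the context extension in $\mathcal{C}$, and set $(A,a).\sigma := (A.\sigma,\, p \circ a)$ with projection $p$ (now viewed as a morphism under $(A,a)$) and the same variable $v$. The universal property transports along $\mathrm{cod}$: a morphism $f : (A,a) \to (B,b)$ together with a term $w$ of type $f(\sigma)$ over $(B,b)$ is the same datum as the underlying $f : A \to B$ (satisfying $f \circ a = b$) together with the term $B \vdash w : f(\sigma)$ in $\mathcal{C}$, and a filler $(A,a).\sigma \to (B,b)$ under $(A,a)$ sending $v$ to $w$ is exactly a $\mathcal{C}$-morphism $A.\sigma \to B$ under $A$ sending $v$ to $w$. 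Unique existence in $\mathcal{C}$ thus yields unique existence in $\mathcal{C}_{\Gamma /}$, and by construction $\mathrm{cod}$ preserves the extension strictly.

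With context extensions pinned down to agree with those of $\mathcal{C}$, the remaining type formers are inherited formally. For each supported constructor I would define the formation, introduction, elimination and computation data in $\mathcal{C}_{\Gamma /}$ by applying the corresponding constructor of $\mathcal{C}$ to the underlying types and terms; this is legitimate precisely because $\mathrm{cod}$ is a bijection on types and terms and, by the previous paragraph, the context extensions entering the formation rules (for instance $\Delta.\sigma \vdash \tau$ for $\mathbf{\Pi}$ and $\mathbf{\Sigma}$) agree with those of $\mathcal{C}$. Every defining equation holds because it holds in $\mathcal{C}$, and stability under substitution follows from stability in $\mathcal{C}$ since substitution is unchanged; preservation by $\mathrm{cod}$ is then immediate. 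The one point requiring genuine care, and the crux of the whole argument, is the matching of context extensions in the previous step; once that is in place every verification is a direct transport along the strict, type-and-term-bijective morphism $\mathrm{cod}$, with no further obstruction.
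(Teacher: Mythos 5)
Your proof is correct, and it is exactly the routine verification the paper has in mind: the lemma is stated with an immediate \qed and no proof body, the author evidently regarding it as a direct consequence of the fact that $\mathrm{cod}$ is a bijection on types and terms and that coslice categories have initial objects and inherit the relevant universal constructions. Your identification of the initial object as $\mathrm{id}_\Gamma$, the computation of context extensions as $(A.\sigma,\, p\circ a)$ with the universal property transported along $\mathrm{cod}$, and the formal transfer of the remaining type formers are all as intended.
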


\begin{definition}
  Let $\mathcal{C}$ be a covariant cwf with an empty context and context extensions.
  The \emph{core} of $\mathcal{C}$ is a covariant cwf on the least full subcategory $\operatorname{Core} \mathcal{C} \subseteq \mathcal{C}$ that contains the empty context and is closed under context extensions, with types and terms given by $\operatorname{Core} \mathcal{C} \hookrightarrow \mathcal{C} \rightarrow \mathrm{Fam}$.
\end{definition}

\begin{lemma}
  \label{lem:core-cwf}
  Let $\mathcal{C}$ be a covariant cwf with an empty context and context extension.
  If $\mathcal{C}$ supports any of finite product types, extensional equality types, dependent product or dependent sums, then so does $\operatorname{Core} \mathcal{C}$, and they are preserved by the inclusion $\operatorname{Core} \mathcal{C} \hookrightarrow \mathcal{C}$.

  If $\mathcal{C}$ supports unit and dependent sum types, then $\operatorname{Core} \mathcal{C}$ is democratic, i.e.\@ every context is isomorphic to a context obtained from the empty context by a single context extension \citep{biequivalence-lcc-cwf}.
  \qed
\end{lemma}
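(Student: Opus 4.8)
The plan is to handle the two assertions separately, the first being essentially formal and the second a short induction. For the type-former claim I would first record that, by construction, $\operatorname{Core}\mathcal{C}$ is a full subcategory of $\mathcal{C}$ that contains the empty context, is closed under context extension, and carries the restrictions of the type and term functors of $\mathcal{C}$. From this I would deduce that context extension in $\operatorname{Core}\mathcal{C}$ is computed exactly as in $\mathcal{C}$: for a core context $\Gamma$ and $\sigma\in\mathrm{Ty}(\Gamma)$ the extension $\Gamma.\sigma$ lies in $\operatorname{Core}\mathcal{C}$ by closure, while the comparison maps $\langle f, w\rangle$ and their uniqueness are inherited because the subcategory is full. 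Each relevant type former takes a context $\Gamma$ (and, for $\mathbf{\Pi}$ and $\mathbf{\Sigma}$, its extension $\Gamma.\sigma$) together with types and terms over these, and returns a type or term over $\Gamma$ itself; hence whenever the inputs lie in $\operatorname{Core}\mathcal{C}$, so do the outputs, the $\mathbf{\Pi}$- and $\mathbf{\Sigma}$-cases using exactly the closure under context extension. Since substitution in the core is the restriction of that in $\mathcal{C}$, substitution stability is inherited verbatim, and the inclusion, being a strict cwf morphism that is the identity on types, terms and structural maps, preserves all of this structure on the nose.

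For democracy I would write $\diamond$ for the empty context and argue by induction on the number $n$ of extensions in a core context $\diamond.\sigma_1.\dots.\sigma_n$ that it is isomorphic to a single extension $\diamond.\rho$. The base case $n=0$ is the isomorphism $\diamond\cong\diamond.\mathbf{1}$ furnished by the unit type, and $n=1$ is immediate. For the step I would write the context as $\Gamma.\tau$ with $\Gamma=\diamond.\sigma_1.\dots.\sigma_{n-1}$, use the inductive hypothesis to pick an isomorphism $c:\Gamma\xrightarrow{\cong}\diamond.\rho$, transport $\tau$ along $c$ to a type $c(\tau)=\mathrm{Ty}(c)(\tau)\in\mathrm{Ty}(\diamond.\rho)$, and invoke functoriality of context extension under isomorphism to obtain $\Gamma.\tau\cong(\diamond.\rho).c(\tau)$. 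The dependent-sum collapse then gives $(\diamond.\rho).c(\tau)\cong\diamond.(\mathbf{\Sigma}_\rho\,c(\tau))$, exhibiting the context as a single extension of $\diamond$ and closing the induction.

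The only genuine inputs are the isomorphisms $\Gamma\cong\Gamma.\mathbf{1}$ and $\Gamma.\sigma.\tau\cong\Gamma.(\mathbf{\Sigma}_\sigma\,\tau)$, both standard in any cwf supporting unit and $\mathbf{\Sigma}$-types and obtainable by comparing the universal properties of the context extensions involved (see \citep{biequivalence-lcc-cwf}). I expect the only real work to be bookkeeping: verifying that these isomorphisms survive the covariant dualization of the cwf axioms, where the empty context is initial and the projection runs $p:\Gamma\to\Gamma.\sigma$, and that the transport $c(\tau)$ and the induced extension isomorphism behave functorially. Since none of these steps presents a genuine difficulty, this is presumably why the lemma is stated without an explicit proof.
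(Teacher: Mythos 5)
Your argument is correct and is exactly the routine verification the paper suppresses (the lemma is stated with a \qed and no proof): fullness of $\operatorname{Core}\mathcal{C}$ plus closure under context extension gives the restriction of all type formers and their preservation by the inclusion, and democracy follows by induction using $\diamond\cong\diamond.\mathbf{1}$ and $\Gamma.\sigma.\tau\cong\Gamma.(\mathbf{\Sigma}_\sigma\,\tau)$. No gaps; the covariant dualization you flag is indeed only bookkeeping.
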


\begin{theorem}
  \label{th:context-as-model}
  Let $\lambda : \Gamma \rightarrow F(G(\Gamma))$ be an $FG$-coalgebra.
  Then the underlying category of $\operatorname{Core} \, ((\operatorname{Coa} \mathrm{sLcc})_{(\Gamma, \lambda) /})$ is equivalent to $U(G(\Gamma))^\mathrm{op}$.
  In particular, every lcc category is equivalent to a cwf that has an empty context and context extensions, and that supports finite product, extensional equality, dependent sum and dependent product types.
\end{theorem}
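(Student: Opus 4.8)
The plan is to read off the theorem from the structural lemmas already in place, so that essentially no new computation is required; the heart of the argument is the identification of the core of the coslice cwf with $\Gamma^{\mathrm{op}}$ furnished by Lemma~\ref{lem:cats-of-weak-exts}.

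First I would equip the core with its cwf structure. By Theorem~\ref{th:lcc-supports-types}, $\operatorname{Coa} \mathrm{sLcc}$ supports finite product, extensional equality, dependent sum and dependent product types. Applying Lemma~\ref{lem:coslice-cwf} with $\mathcal{C} = \operatorname{Coa} \mathrm{sLcc}$ and base context $(\Gamma, \lambda)$ shows the coslice covariant cwf $(\operatorname{Coa} \mathrm{sLcc})_{(\Gamma, \lambda) /}$ has an empty (initial) context, context extensions, and supports all four classes of types; Lemma~\ref{lem:core-cwf} transports these to the core. Crucially, since finite products provide unit types and we also have dependent sums, Lemma~\ref{lem:core-cwf} makes $\operatorname{Core}((\operatorname{Coa} \mathrm{sLcc})_{(\Gamma, \lambda) /})$ \emph{democratic}.

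Next I would compute the underlying category. The empty context is $\mathrm{id}_{(\Gamma, \lambda)}$, and the types over it are exactly the objects $\sigma \in \operatorname{Ob} \Gamma$; hence the core, being closed under context extension, contains every single extension $p_\sigma : (\Gamma, \lambda) \rightarrow (\Gamma.\sigma, \lambda.\sigma)$ of the empty context. Democracy says every object of the core is isomorphic to such a $p_\sigma$, so the full subcategory on the family $\{ p_\sigma \}$ includes into the core as an essentially surjective, full and faithful functor, i.e.\@ an equivalence. That full subcategory is precisely category~\ref{itm:ext-1-category} of Lemma~\ref{lem:cats-of-weak-exts}, which the lemma identifies with category~\ref{itm:locally-discrete-gamma-op}, namely $\Gamma^{\mathrm{op}}$. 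As the underlying category of $\Gamma^{\mathrm{op}}$ is $U(G(\Gamma))^{\mathrm{op}}$, the first assertion follows.

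For the ``in particular'' clause I would, given an arbitrary lcc category $\mathcal{C}$, first produce an $FG$-coalgebra $(\Gamma, \lambda)$ with $U(G(\Gamma)) \simeq \mathcal{C}$, and then reinterpret the covariant cwf as an ordinary one. Every object of $\operatorname{Coa} \mathrm{sLcc}$ is cofibrant (being algebraically cofibrant) and fibrant (fibrancy is reflected from $\mathrm{sLcc}$, all of whose objects are fibrant), so through the chain of Quillen equivalences $\mathrm{Lcc} \simeq_Q \mathrm{sLcc} \simeq_Q \operatorname{Coa} \mathrm{sLcc}$ (Theorems~\ref{th:algebraically-fibrant-model-category} and~\ref{th:coalgebraic-model-category}) the lcc category $\mathcal{C}$, viewed as a fibrant object of $\mathrm{Lcc}$ via Proposition~\ref{prop:lcc-model-cat}, is represented by a bifibrant $(\Gamma, \lambda)$; since weak equivalences of fibrant lcc sketches are equivalences of underlying lcc categories, this yields $U(G(\Gamma)) \simeq \mathcal{C}$. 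Finally, a covariant cwf on a category $\mathcal{D}$ is the same datum as an ordinary cwf on $\mathcal{D}^{\mathrm{op}}$, so the covariant cwf on $\operatorname{Core}((\operatorname{Coa} \mathrm{sLcc})_{(\Gamma, \lambda) /}) \simeq U(G(\Gamma))^{\mathrm{op}}$ corresponds to an ordinary cwf with underlying category $U(G(\Gamma)) \simeq \mathcal{C}$, into which all the type formers transport under the opposite reindexing.

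The one step demanding genuine care is the production of $(\Gamma, \lambda)$: one must check that the equivalences assembled at the level of homotopy categories descend to an honest equivalence of underlying $1$-categorical lcc categories, which rests on the description of $\operatorname{Ho} \mathrm{Lcc}$ in Proposition~\ref{prop:lcc-model-cat} and on the forgetful functor to $\mathrm{Cat}$ reflecting equivalences between fibrant objects. Everything else is bookkeeping over Lemmas~\ref{lem:cats-of-weak-exts}, \ref{lem:coslice-cwf} and~\ref{lem:core-cwf}.
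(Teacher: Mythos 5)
Your argument is correct and matches the paper's proof: Lemmas \ref{lem:coslice-cwf} and \ref{lem:core-cwf} supply the cwf structure and democracy, and the core is then identified with category \ref{itm:ext-1-category} of Lemma \ref{lem:cats-of-weak-exts}, hence with $\Gamma^{\mathrm{op}}$. The only (cosmetic) difference is in producing the coalgebra for a given lcc category $\mathcal{C}$: the paper simply takes $\Gamma = F(\mathcal{C})$ with $\lambda = F(\eta_{\mathcal{C}})$, which is the explicit form of the bifibrant representative you extract abstractly from the chain of Quillen equivalences.
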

\begin{proof}
  $\operatorname{Core} \, ((\operatorname{Coa} \mathrm{sLcc})_{(\Gamma, \lambda) /})$ is a covariant cwf supporting all relevant type constructors by Lemmas \ref{lem:coslice-cwf} and \ref{lem:core-cwf}.
  It is democratic and hence equivalent to category \ref{itm:ext-1-category} of lemma \ref{lem:cats-of-weak-exts}.

  Given an arbitrary lcc category $\mathcal{C}$, we set $\Gamma = F(\mathcal{C})$ and define coalgebra structure by $\lambda = F(\eta) : F(\mathcal{C}) \rightarrow F(G(F(\mathcal{C})))$.
  Then $G(\Gamma)$ is equivalent to both $\mathcal{C}$ and a cwf supporting the relevant type constructors.
\end{proof}

\section{Conclusion}

We have shown that the category of lcc categories is a model of extensional dependent type theory.
Previously only individual lcc categories were considered as targets of interpretations.
As in these previous interpretations, we have had to deal with the issue of coherence:
Lcc functors (and pullback functors in particular) preserve lcc structure only up to isomorphism, whereas substitution in type theory commutes with type and term formers up to equality.

Our novel solution to the coherence problem relies on working globally, on all lcc categories at once.
In contrast to some individual lcc categories, the higher category of all lcc categories is locally presentable.
This allows the use of model category theory to construct a presentation of this higher category in terms of a 1-category that admits an interpretation of type theory.

While we have only studied an interpretation of a type theory with dependent sum and dependent product, extensional equality and finite product types, it is straightforward to adapt the techniques of this paper to type theories with other type constructors.
For example, a dependent type theory with a type of natural numbers can be interpreted in the category of lcc categories with objects of natural numbers.
Alternatively, we can add finite coproduct, quotient and list types but omit dependent products, and obtain an interpretation in the category of arithmetic universes \citep{au-as-list-arithmetic-pretopos,au-sketches}.

I would expect there to be a general theorem by which one can obtain a type theory and its interpretation in the category of algebras for every (higher) monad $M$ on $\mathrm{Cat}$ (with the algebras of $M$ perhaps subject to being finitely complete and stable under slicing).
Such a theorem, however, is beyond the scope of the present paper.

\bibliographystyle{abbrvnat}
\setcitestyle{authoryear,open={(},close={)}}
\bibliography{main}

\end{document}